\newcommand{\sign}{{\rm sign}\,}
\newcommand{\PP}{{\mathcal P}}
\newcommand{\ze}{\zeta}
\newcommand{\zs}{\zeta(s)}
\newcommand{\DD}{{\mathcal D}}
\newcommand{\GG}{{\mathcal G}}
\newcommand{\G}{{\mathcal G}}
\newcommand{\A}{{\mathcal A}}
\newcommand{\Z}{{\mathcal Z}}
\newcommand{\RR}{{\mathbb R}}
\newcommand{\CC}{{\mathbb C}}
\newcommand{\NN}{{\mathbb N}}
\newcommand{\N}{{\mathcal N}}
\newcommand{\R}{{\mathcal R}}
\newcommand{\SSS}{{\mathcal S}}
\newcommand{\MM}{{\mathcal M}}
\newcommand{\D}{{\Delta}}
\newcommand{\al}{{\alpha}}
\newcommand{\ve}{{\varepsilon}}
\newcommand{\de}{\delta}
\newcommand{\si}{\sigma}
\newcommand{\zp}{\zeta_{\mathcal P}}
\newcommand{\psip}{\psi_{\mathcal P}}
\newcommand{\Dp}{\Delta_{\mathcal P}}
\newcommand{\pip}{\pi_{\mathcal P}}
\newcommand{\PiP}{\Pi_{\mathcal P}}
\newcommand{\srs}{\sum_{\rho\in\SSS}}
\newtheorem{theorem}{Theorem}
\newtheorem{lemma}{Lemma}
\newtheorem{proposition}{Proposition}
\newtheorem{definition}{Definition}
\theoremstyle{definition}
\newtheorem{remark}{Remark}
\begin{document}

\title{Oscillation of the remainder term in the prime number theorem of Beurling, "caused by a given $\zeta$-zero"}



\author{Szil\' ard Gy. R\' ev\' esz}




\maketitle

\begin{abstract} Continuing previous studies of the Beurling zeta function, here we prove two results, generalizing long existing knowledge regarding the classical case of the Riemann zeta function and some of its generalizations.

First, we address the question of Littlewood, who asked for explicit oscillation results provided a zeta-zero is known. We prove that given a zero $\rho_0$ of the Beurling zeta function $\zeta_{\PP}$ for a given number system generated by the primes $\PP$, the corresponding error term $\Delta(x):=\psi_{\PP}(x)-x$, where $\psi_{\PP}(x)$ is the von Mangoldt summatory function shows oscillation in any large enough interval, as large as $\frac{\pi/2-\ve}{|\rho_0|}x^{\Re \rho_0}$.

The somewhat mysterious appearance of the constant $\pi/2$ is explained in the study. Finally, we prove as the next main result of the paper the following: given $\ve>0$, there exists a Beurling number system with primes $\PP$, such that $|\Delta(x)| \le \frac{\pi/2+\ve}{|\rho_0|}x^{\Re \rho_0}$.

In this second part a nontrivial construction of a low norm sine polynomial is coupled by the application of the wonderful recent prime random approximation result of Broucke and Vindas, who sharpened the breakthrough probabilistic construction due to Diamond, Montgomery and Vorhauer.
\end{abstract}

\bigskip
{\bf MSC 2020 Subject Classification.} Primary 11M41; Secondary 11F66, 11M36, 30B50, 30C15.

\smallskip
{\bf Keywords and phrases.} {\it Beurling zeta function, Beurling prime number theorem, Gibb's pehnomenon, Mellin transform, generalized prime random approximation procedure.}

\medskip
{\bf Author information.} Alfréd Rényi Institute of Mathematics\\
Reáltanoda utca 13-15, 1053 Budapest, Hungary \\
{\tt revesz.szilard@renyi.hu}



\section{Introduction}
This work deals with Beurling's theory of generalized integers and primes.
The theory fits well to the study of several mathematical
structures. A vast field of applications of Beurling's theory is
nowadays called \emph{arithmetical semigroups}, which are
described in detail e.g. by Knopfmacher, \cite{Knopf}.
For important examples where the theory is of relevance see
Knopfmacher's book \cite{Knopf}, pages 11-22.

Here $\G$ is a unitary, commutative semigroup, with a countable set
$\PP$ of indecomposable generators, called the \emph{primes} of $\G$,
which freely generate the whole of $\G$: i.e.,
any element $g\in \G$ can be (essentially, i.e. up to order of
terms) uniquely written in the form $g=p_1^{k_1}\cdot \dots \cdot
p_m^{k_m}$: two (essentially) different such expressions are
necessarily different as elements of $\G$, while each element has
its (essentially) own unique prime decomposition.

Moreover, there is a \emph{norm} $|\cdot|~: \G\to \RR_{+}$ so that
the following hold. First, the image of $\G$, $|\G|\subset
\RR_{+}$ is \emph{locally finite} (this property sometimes being also called "discrete"), i.e. any finite interval of $\RR_{+}$ can contain the norm of only a finite number of elements of $\G$; thus the function
\begin{equation}\label{Ndef}
{\N}(x):=\# \{g\in \G~:~ |g| \leq x\}
\end{equation}
exists as a finite, nondecreasing, right continuous, nonnegative integer valued
function on $\RR_{+}$. Second, the norm is multiplicative, i.e. $|g\cdot h| = |g| \cdot
|h|$; it follows that for the unit element $e$ of $\G$ $|e|=1$, and
that all other elements $g \in \G$ have norms strictly larger than 1.


In this work we will assume the so-called \emph{"Axiom A"} (in its normalized form to $\delta=1$) of
Knopfmacher, see pages 73-79 of his fundamental book \cite{Knopf}.

\begin{definition} It is said that ${\N}$ (or, loosely speaking, $\ze$)
satisfies \emph{Axiom A} -- more precisely, Axiom
$A(\kappa,\theta)$ with the suitable constants $\kappa>0$ and
$0<\theta<1$ -- if we have\footnote{The usual formulation uses the more natural version $\R(x):= \N(x)-\kappa x$. However, our version is more convenient with respect to the initial values at 1, as we here have $\R(1-0)=0$. All respective integrals of the form $\int_1$ will be understood as integrals from $1-0$, and thus we can avoid considering endpoint values in the partial integration formulae. Alternatively, we could have taken also $\N(x)$ left continuous, and integrals from 1 in the usual sense: also with this convention we would have $\R(1)=0$.} for the remainder term
$$
\R(x):= \N(x)-\kappa (x-1)
$$
the estimate
\begin{equation}\label{Athetacondi} \left| \R(x) \right|  \leq A x^{\theta} \quad (x \geq 1 ~ \textrm{arbitrary}).
\end{equation}
\end{definition}
The Beurling zeta function is defined as the Mellin transform of $\N(x)$, i.e.
\begin{equation}\label{zetadef}
\ze(s):=\ze_{\G}(s):=\MM(\N)(s):=\int_1^{\infty} x^{-s} d\N(x) = \sum_{g\in\G} \frac{1}{|g|^s}.
\end{equation}
If only $\N(x)=O(x^C)$, the series converges absolutely and locally uniformly in the halfplane $\Re s> C+1$, moreover, its terms can be rearranged to provide the \emph{Euler product formula}
\begin{equation}\label{Euler}
\ze_{\G}(s)=\prod_{p\in\PP} \left(\frac{1}{1-|p|^{-s}}\right).
\end{equation}
In particular, if $\N(x)=O(x^{1+\ve})$ for all $\ve>0$, then $\zeta_\GG$ is absolutely convergent in $\Re s>1$, it cannot vanish there--as is clear from \eqref{Euler}--moreover, $|\zs| \ge 1/\zeta(\si)$ ($\si:=\Re s$). Furthermore, under Axiom A it admits a meromorphic, essentially analytic continuation
$\kappa\frac{1}{s-1}+\int_1^{\infty} x^{-s} d\R(x)$ up to $\Re s >\theta$
with only one, simple pole at 1. For an analysis of the finer behavior of the number of primes $\pi_{\PP}(x):=\sum_{p\in \PP;~|p|\le x} 1$--as in the classical case of $\GG=\NN$--the location of the zeroes of $\zs$ in the "critical strip" $\theta<\Re s\le 1$ is decisive, as we will see.

The Beurling zeta function \eqref{zetadef} can be used to express
the generalized von Mangoldt function
\begin{equation}\label{vonMangoldtLambda}
\Lambda (g):=\Lambda_{\G}(g):=\begin{cases} \log|p| \quad & \textrm{if}\quad g=p^k,
~ k\in\NN ~~\textrm{with some prime}~~ p\in\PP\\
0 \quad & \textrm{if}\quad g\in\G ~~\textrm{is not a prime power in} ~~\G
\end{cases}
\end{equation}
as coefficients of the logarithmic derivative of the zeta function
\begin{equation}\label{zetalogder}
-\frac{\zeta'}{\zeta}(s) = \sum_{g\in \G} \frac{\Lambda(g)}{|g|^s}.
\end{equation}

The Beurling theory of generalized primes is mainly concerned with
the analysis of the summatory function
\begin{equation}\label{psidef}
\psi(x):=\psi_{\G}(x):=\sum_{g\in \G,~|g|\leq x} \Lambda (g).
\end{equation}
As is well-known, $\psi(x)$ is essentially $\pip(x)$, apart from an inessential logarithmic weight and a smaller order contribution from higher prime powers. Therefore, the asymptotic relation $\psi(x)\thicksim x$ is equivalent to say that $\pi(x)\thicksim {\rm li}(x):=\int_2^x du/\log u$ or $x/\log x$, and is thus termed as \emph{the Prime Number Theorem} (PNT). Equivalently, we can also formulate this by use of the "error term in the prime number formula", for which the standard notation is
\begin{equation}\label{Deltadef}
\Delta(x):=\Delta_{\G}(x):=\psi(x)-x.
\end{equation}
Then PNT is thus the statement that $\Delta(x)=o(x)$. The so-called "Chebyshev bounds" $x \ll \psi(x) \ll x$, weaker than PNT,  mean that there exist positive constants $0<c_1<c_2<\infty$ with $c_1x \le \psi(x) \le c_2 x$. Extending hundred years old knowledge for the natural numbers $\NN$ as $\GG$ and the corresponding Riemann zeta function, much study was devoted to describe, what conditions are necessary resp. sufficient for PNT or the Chebyshev bounds to hold. As in the classical case, it was clarified that for the Chebyshev bounds it suffices to control the behavior of the Beurling zeta function in the convergence halfplane $\si:=\Re s>1$, and several conditions were found to ensure these basic inequalities, see \cite{Vindas12, Vindas13, Zhang93, DZ-13-2, DZ-13-3}. For the PNT the analysis also followed classical lines, demonstrating that only proper behavior of the Beurling zeta function $\zs$ in and on the boundary of the convergence halfplane is needed for the PNT to hold. In this wide generality, however, when no analytic (meromorphic) continuation is assumed (so that in particular Axiom A is not postulated), delicate studies revealed a fine connection of "nicety" of the boundary function on the one hand, and validity of the PNT on the other hand \cite{Beur, K-98, DebruyneVindas-PNT, DSV, DZ-17, Zhang15-IJM, Zhang15-MM}. Also an interesting new question, which simply does not arise in the classical case, is the converse direction: assuming some form of the PNT (assuming it with some control on the error $\Delta(x)$), derive density results for the number of integers $\N(x)$ \cite{Diamond-77, K-17, DebruyneVindas, Schlage-PuchtaVindas}.

There are other studies related to the Beurling PNT in the literature. In particular, some rough (as compared to our knowledge in the natural prime number case) estimates and equivalences were worked out in the analysis of the connection between $\zeta$-zero distribution and the behavior of the error term $\Delta(x)$. One of the deep results\footnote{In the course of his proof Kahane also proves that the zeta function has $O(T)$ zeros (counted with multiplicity) on the line segment $\{s = a+it; 0 < t \le T\}$ with any $a>\max(1/2,\theta)$, i.e. the zero counting function has finite upper density on the vertical line through $a$. To the best of our knowledge this is the only result of a zero-density estimate feature preceding our recent study \cite{Rev-D}.}
in this direction is the extension (apart from a minor loss in the precision regarding the logarithmic terms) of the classical oscillation result $\pi(x)-{\rm li}(x) =\Omega_{\pm}(\sqrt{x}\log\log\log x/\log x)$ of Littlewood \cite{Littlewood-CR} to the Beurling context \cite{K-99}. Further, so-called $(\alpha,\beta)$ systems and $(\alpha,\beta,\gamma)$ systems were defined \cite{H-20}, with these parameters denoting the "best possible" exponents in estimating the error terms $ \Delta(x), \R(x)$ and the summatory function $M_{\GG}(x)$ of the Beurling version of the M\"obius function $\mu_{\GG}(g)$; in particular, Hilberdink showed that the two largest of these three parameters have to be at least 1/2 and must match \cite{H-5}, \cite{H-20}. Oscillation order of the generalized M\"obius summatory function and even more general arithmetical functions are also treated up to recent times \cite{H-10, DZ-17, DebruyneDiamondVindas, BrouckeVindas, PintzMP, DeMaVi}.

A natural, but somewhat different direction, going back to Beurling himself, is the study of analogous questions in case the assumption of Axiom A is weakened to e.g. an asymptotic condition on $\N(x)$ with a product of $x$ and a sum of powers of $\log x$, or sum of powers of $\log x$ perturbed by almost periodic polynomials in $\log x$, or $N(x)-cx$ periodic, see \cite{Beur}, \cite{Zhang93}, \cite{H-12}, \cite{RevB}.

Apart from generality and applicability to e.g. distribution of
prime ideals in number fields, the interest in the Beurling theory
was greatly boosted by a construction of Diamond, Montgomery
and Vorhauer \cite{DMV}. They basically showed that under Axiom A
the Riemann Hypothesis (RH for short from here on) may still fail; moreover, nothing better
than the most classical \cite{V} zero-free region and error term  of
\begin{equation}\label{classicalzerofree}
\zeta(s) \ne 0 \qquad \text{whenever}~~~ s=\sigma+it, ~~ \sigma >
1-\frac{c}{\log t},
\end{equation}
and
\begin{equation}\label{classicalerrorterm}
\psi(x)=x +O(x\exp(-c\sqrt{\log x})
\end{equation}
follows from \eqref{Athetacondi} at least if $\theta>1/2$.

Therefore, Vinogradov mean value theorems on trigonometric sums
and many other stuff are certainly irrelevant in this generality,
and for Beurling zeta functions a careful reconsideration of the
combination of "ancient-classical" methods and "elementary"
arguments can only be implemented.

After the Diamond-Montgomery-Vorhauer paper \cite{DMV}, better and better examples were constructed for arithmetical semigroups with very "regular" (such as satisfying RH and error estimates $\psi(x)=x+O(x^{1/2+\varepsilon})$) and very "irregular" (such as having no better zero-free regions than \eqref{classicalzerofree} and no better asymptotic error estimates than \eqref{classicalerrorterm}) behavior and zero- or prime distribution, see e.g. \cite{H-15}, \cite{BrouckeDebruyneVindas}, \cite{DMV}, \cite{H-5}, \cite{Zhang7}. For a throughout analysis of these directions as well as for much more information the reader may consult the monograph \cite{DZ-16}.

In sum, in contrast with the classical natural number system, when it is generally believed that the Riemann Hypothesis holds true, in the generality of arithmetical semigroups many different scenarios occur. It is all the more natural to pose the question, extending the original one of Littlewood \cite{Littlewood-JLMS}, what \emph{explicit, effective} conclusion\footnote{It was clear for long, and extends easily to the generality of the Beurling case, that once $\zeta(\rho_0)=0$, we must have $|\Delta(x)|\ge x^{\beta_0-\ve}$ for some $x$ values tending to $\infty$. However, this old result of Phragmen, see \cite{Ingham}, was completely ineffective, similarly to the later result of Schmidt \cite{Schmidt} stating that $\Delta(x)=\Omega(\sqrt{x})$, providing an improvement over the Phragmen result in case RH holds. These motivated Littlewood to ask for some effective oscillation result, explicit both in terms of the estimate and the localization of suitable $x$-values. Also note that in case we have RH, results of Littlewood \cite{Littlewood-CR}, as improved by Ingham \cite{Ingham-AA}, provided such an explicit result, but the interesting case of some "exceptional zero" (not on the critical line) could not be handled by them.} can be drawn for the oscillation of the error term $\Delta(x)$ from the existence of a given $\zeta$-zero? In fact, posing the problem Littlewood also pointed to the "interference difficulty" regarding the sum $\sum_\rho \frac{x^\rho}{\rho}$, appearing in the Riemann-von Mangoldt formula. The present paper addresses this question of Littlewood in the general context of Beurling number systems $\GG$.

A starting point to see what may be expected in this regard is the extension to the Beurling case of the classical formula of Riemann and von Mangoldt. For the Beurling case the formula was presented in Theorem 5.1 of \cite{Rev-MP}. This formula, in a slightly weakened form, says that for any $\ve>0$ we have
\begin{equation}\label{eq:RvM}
\Delta(x)=- \sum_{\rho \in \Z_{\ve,x}}
\frac{x^{\rho}}{\rho} + O_{\ve}\left(x^{\theta+\ve} \right),
\end{equation}
the sum running over $\ze$-zeroes of real part $\Re \rho\ge \theta+\ve$ and imaginary part $|\Im \rho|\le x$. Note that dropping the condition of finiteness and allowing $\Im \rho$ grow unbounded would in principle make the series divergent; this unpleasant divergence behavior makes this series representation of the error term $\Delta(x)$ hard to use. Nevertheless, the series suggests that once a zero $\rho_0$ is known, the sum has a term of the size $|x^{\rho_0}/\rho_0|=x^{\beta_0}/|\rho_0|$ (with $\beta_0:=\Re \rho_0$), and we can expect that the total sum--if cancelation of terms do not prevail for all values of large $x$--will be at least the same size, too.

In the classical case of the Riemann zeta function the problem of Littlewood was first answered by Turán \cite{Turan1}. Applying his celebrated power sum method \cite{Turan},\cite{SosTuran} Turán could prove an oscillation result essentially meaning $\Delta(x)=\Omega(x^{\beta_0-\ve})$ for all $\ve>0$ and with an effective, explicit lower bound and localization. The Turán result then was sharpened in several steps \cite{Stas0}, \cite{Knapowski} until Pintz \cite{Pintz1} reached $|\Delta(x)|\ge (1-\ve) x^{\beta_0}/|\rho_0|$, fully exploiting the presence of the term belonging to $\rho_0$. The results of Turán and others then were extended to various more general contexts, in particular to the case of prime ideals of algebraic number fields, see \cite{Stas-Wiertelak-1, Stas-Wiertelak-2, Stas1, Rev1}. Let us note that these effective results also furnished some localizations, where the large oscillations should occur, while related works \cite{PintzBasel, PintzNoordwijkerhout, Schlage-Puchta} produced various versions where the sharpness of the estimate was a little bit sacrificed in exchange for a sharper localizations, a trade-off so characteristic in these results.

At this stage, however, a new goal was set by Pintz: try to exploit \emph{both terms} belonging to $\rho_0$ and $\overline{\rho_0}$ together (the latter also occurring in view of the reflection principle), so that possibly the sum of these two terms could be extracted from \eqref{eq:RvM}. Interestingly, the first impression given by the formula \eqref{eq:RvM}, that is that even $(2-\ve) x^{\beta_0}/|\rho_0|$ should be reached, fails. The result below is the best what could be obtained in \cite{RevAA}.
\begin{theorem}\label{th:onezeroosci} Let $\ze(\rho_0)=0$
with $\rho_0=\beta_0+i\gamma_0$ be a zero of the Riemann zeta function.
Then for arbitrary $\ve>0$ we have for some suitable, arbitrarily large values
of $x$ the lower estimate $|\Delta(x)|\geq
(\pi/2-\ve) \frac{x^{\beta_0}}{|\rho_0|}$.
\end{theorem}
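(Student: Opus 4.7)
I would begin with the Riemann--von Mangoldt explicit formula \eqref{eq:RvM}. The two terms attributable to $\rho_0$ and $\overline{\rho_0}$ combine to
$$-\frac{x^{\rho_0}}{\rho_0}-\frac{x^{\overline{\rho_0}}}{\overline{\rho_0}}=-\frac{2x^{\beta_0}}{|\rho_0|}\cos(\gamma_0\log x-\arg\rho_0),$$
an oscillation of amplitude $2x^{\beta_0}/|\rho_0|$. The theorem claims a $\pi/4$-fraction of this amplitude is attained by $|\Delta(x)|$ for arbitrarily large $x$; the shortfall from the naive factor $1$ is exactly Littlewood's ``interference'' from the remaining zeros in \eqref{eq:RvM}, which at a given $x$ may partially cancel the $\rho_0,\overline{\rho_0}$ contribution.

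The plan is a weighted $L^{1}$-versus-sup comparison. Take $\phi\ge 0$ real, even, compactly supported in $[-\eta,\eta]$, and set
$$J(x):=\int_{-\eta}^{\eta}\Delta(xe^{u})\,\phi(u)\,du.$$
Inserting \eqref{eq:RvM} yields $J(x)=-\sum_{\rho\in\Z_{\ve,x}}(x^{\rho}/\rho)\,\widehat\phi(\rho)+O_{\ve}(x^{\theta+\ve})$, where $\widehat\phi(s):=\int_{-\eta}^{\eta}e^{us}\phi(u)\,du$, while trivially $|J(x)|\le \|\phi\|_1\sup_{|u|\le\eta}|\Delta(xe^u)|$. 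One then chooses $\phi$ so that $\widehat\phi$ is concentrated at the heights $\pm\gamma_0$ and the tail $\sum_{\rho\ne\rho_0,\overline{\rho_0}}\widehat\phi(\rho)x^{\rho}/\rho$ is $o(x^{\beta_0}/|\rho_0|)$; here the Riemann--von Mangoldt count $N(T)=O(T\log T)$ together with Abel summation handles the other zeros. The residual $J(x)$ is then dominated by $-2\Re\bigl(\widehat\phi(\rho_0)x^{\rho_0}/\rho_0\bigr)$. Kronecker's theorem supplies arbitrarily large $x$ at which $x^{i\gamma_0}$ aligns the phase, driving this main term to its peak $2|\widehat\phi(\rho_0)|x^{\beta_0}/|\rho_0|$, whence
$$\sup_{|u|\le\eta}|\Delta(xe^{u})|\;\ge\;\frac{2|\widehat\phi(\rho_0)|}{\|\phi\|_1}\cdot\frac{x^{\beta_0}}{|\rho_0|}-o\!\left(\frac{x^{\beta_0}}{|\rho_0|}\right).$$

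The theorem is thus reduced to the extremal problem of maximising $|\widehat\phi(\rho_0)|/\|\phi\|_1$ subject to the ``denoising'' constraints above. The sharp answer is $\pi/4$, whose ultimate source is the Dirichlet integral $\int_{0}^{\infty}(\sin t)/t\,dt=\pi/2$; this is the ``Gibbs-type'' constant the abstract alludes to, familiar from Paley--Wiener and Beurling--Selberg extremal theory. The main obstacle is that the three desiderata interact and must be honoured simultaneously: one must (i) push $|\widehat\phi(\rho_0)|/\|\phi\|_1$ up to $\pi/4-o(1)$, (ii) keep the tail over the other zeros uniformly $o(x^{\beta_0}/|\rho_0|)$ as $x\to\infty$, and (iii) realise the phase coincidence for arbitrarily large $x$. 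The earlier bound $(1-\ve)x^{\beta_0}/|\rho_0|$ of Pintz exploited only $\rho_0$ via Turán's power sum method, which does not see the interference between $\rho_0$ and $\overline{\rho_0}$; the additional factor $\pi/2$ demands the finer Fourier extremal analysis sketched above, where the precise choice of $\phi$ -- essentially a smoothed indicator on a frequency neighbourhood of $\pm\gamma_0$ -- produces the Gibbs constant in place of unity.
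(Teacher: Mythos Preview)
Your outline has a genuine gap, and it concerns precisely the step you dismiss. You write that Pintz's bound ``exploited only $\rho_0$ via Tur\'an's power sum method'' and that the sharper constant ``demands the finer Fourier extremal analysis'' in place of power sums. This is backwards. The paper's proof of the $\pi/2$ result (carried out for the Beurling version, Theorem~\ref{th:Bonezeroosci-plus}, but by the same argument as in \cite{RevAA} for the Riemann case) \emph{still} rests on a power sum lemma, namely the Modified Cassels' Theorem (Lemma~\ref{l:Cassels}), applied to the full collection of zeros in a horizontal strip around $\gamma_0$. The point is that one \emph{cannot} filter out the other zeros: there may be zeros $\rho$ with $\Re\rho>\beta_0$, whose contribution $x^{\rho}\widehat\phi(\rho)/\rho$ is \emph{not} $o(x^{\beta_0})$ for any fixed choice of $\phi$, and there may be zeros arbitrarily close to $\rho_0$, which no $\widehat\phi$ can separate. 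Your requirement (ii) is therefore unachievable in general, and Kronecker alignment (iii) is inapplicable since the relevant frequencies are unknown. The power sum method is exactly what circumvents this: it asserts that for \emph{some} value of the parameter in a prescribed range, the whole sum over nearby zeros is at least as large as the two known terms $1+1=2$ coming from $\rho=\rho_0$, regardless of what the other terms are.

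Where, then, does $\pi/2$ enter? Not via an extremal ratio $|\widehat\phi(\rho_0)|/\|\phi\|_1=\pi/4$ (that claim is unsubstantiated; for $\phi\ge 0$ of small support the ratio is close to $1$), but on the \emph{upper-bound} side. The paper forms $S=U(\rho_0)+U(\overline{\rho_0})$ with a Gaussian kernel $e^{ms^2+Ms}$; expressing $S$ as a weighted integral of $\Delta(x)$, the two conjugate contributions combine into a factor $|\cos(\gamma_0\log x-\alpha_0)|$ under the Gaussian, and Lemma~\ref{l:estimates}(iii) gives $\int_{-\infty}^{\infty}|\cos(Py+R)|e^{-y^2}dy\le 2/\sqrt{\pi}+O(1/P)$. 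It is this $2/\sqrt{\pi}$, the Gaussian $L^1$-average of $|\cos|$, that turns the upper bound for $|S|$ into $(4/\pi)K|\rho_0|+o(1)$; comparison with the power-sum lower bound $|S|\ge 2-o(1)$ then yields $K\ge(\pi/2-\ve)/|\rho_0|$. So the improvement from $1$ to $\pi/2$ comes from pairing $\rho_0$ with $\overline{\rho_0}$ to get $k=2$ in Cassels' lemma \emph{and} from the $|\cos|$-averaging in the upper estimate---not from abandoning power sums for a Fourier extremal problem.
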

Of course, we do not know if RH holds, and if it holds, then a classical result \cite{Littlewood-CR} already says that $\Delta(x)=\Omega(\sqrt{x}\log\log\log x)$, larger than any individual term with some zero with $\beta_0=1/2$. But hypothetically, if there are zeroes off the critical line, then interference of same order terms can in principle extinguish some of the contribution of the above two terms, resulting in an oscillation of the size $(\pi/2-\ve) x^{\beta_0}/|\rho_0|$ only. To grasp this phenomenon the paper \cite{RevAA} considered a general class $Z$ of "zeta-type functions" and constructed an example in this class with some linearly dependent off-critical line zeroes and $|\Delta(x)|\le (\pi/2+\ve) x^{\beta_0}/|\rho_0|$. This, although we have no idea as to the validity of RH, but indicated that Theorem \ref{th:onezeroosci} is optimal in general (at least in the generality of the class $Z$).

The present work is part of a series. In \cite{Rev-MP} we proved a number of technical auxiliary results and estimates and concluded with the above mentioned Riemann-von Mangoldt formula \eqref{eq:RvM}. In the second part \cite{Rev-D} we worked out three theorems on the distribution of zeroes of the Beurling zeta function in the critical strip. With these we aimed to lay the ground for the extension to the Beurling case of the above results on the Littlewood question.

Here we will prove the following results.

\begin{theorem}\label{th:Bonezeroosci} Assume that $\GG$ satisfies Axiom A. Let $\ze(\rho_0)=0$ be a zero of the Beurling zeta function with $\rho_0=\beta_0+i\gamma_0$ satisfying $\beta_0>\theta$.

Then for arbitrary $\ve>0$ we have for some suitable, arbitrarily large values of $x$ the lower estimate $|\Delta(x)|\geq (1-\ve) \frac{x^{\beta_0}}{|\rho_0|}$;
moreover, in case $\gamma_0\ne 0$, even $|\Delta(x)|\geq (\pi/2-\ve) \frac{x^{\beta_0}}{|\rho_0|}$.
\end{theorem}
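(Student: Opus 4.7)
The plan is to apply a kernel-averaging argument to the Riemann--von Mangoldt formula \eqref{eq:RvM} from \cite{Rev-MP}, together with the zero-density control of \cite{Rev-D}, to tame all zero-contributions except those from $\rho_0$ and (if $\gamma_0\ne 0$) $\overline{\rho_0}$. First I would introduce a nonnegative test kernel $K$ of small support near $1$ and set
$$J(y):=\int_1^\infty \Delta(x)\,K(x/y)\,\frac{dx}{x}\;=\;-\sum_{\rho}\widehat K(\rho)\,\frac{y^\rho}{\rho}\;+\;O_\ve\!\left(y^{\theta+\ve}\right),$$
where $\widehat K(s):=\int_0^\infty K(u)u^{s-1}du$. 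Since $|J(y)|\le\|K\|_1\sup_{x\sim y}|\Delta(x)|$, any lower bound on $|J(y)|$ at a well-chosen $y$ will give a pointwise lower bound on $|\Delta|$, and the task reduces to producing such a lower bound for $|J|$.

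For the real-zero case $\gamma_0=0$ I would rely on positivity: the $\rho_0$-term $-\widehat K(\rho_0)y^{\rho_0}/\rho_0$ in $J(y)$ has constant sign, and a Landau-type tauberian argument, exploiting the pole of the Mellin transform of $\Delta$ at $\rho_0$ (of residue $-1/\rho_0$, arising from $\zeta_{\PP}(\rho_0)=0$) together with the saturation $|\widehat K(\rho_0)|/\|K\|_1\to 1$ as the support of $K$ shrinks, will yield $|\Delta(y_n)|\ge(1-\ve)y_n^{\beta_0}/|\rho_0|$ along a sequence $y_n\to\infty$.

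The case $\gamma_0\ne 0$ is the more subtle one. Here the leading part of $J(y)$ is the conjugate-pair contribution
$$-\,2\,\frac{|\widehat K(\rho_0)|}{|\rho_0|}\,y^{\beta_0}\,\cos(\gamma_0\log y+\varphi_0),\qquad \varphi_0=-\arg(\widehat K(\rho_0)/\rho_0),$$
and the cosine factor can be brought arbitrarily close to $\pm 1$ along a sequence $y_n\to\infty$ by density of $\{\gamma_0\log y\bmod 2\pi\}$. Passing from the integrated $J(y_n)$ to a pointwise bound on $\Delta$ will cost an extremal factor: the optimal trade-off between maximising the retained amplitude $|\widehat K(\rho_0)|/\|K\|_1$ (which favours sharp cutoffs $K=\mathbf 1_{[1,1+h]}$) and suppressing the Gibbs-type overshoot in the tail of other zero-contributions (which favours smoother kernels) yields precisely the Dirichlet-integral constant $\pi/2=\int_0^\infty\sin(t)/t\,dt$. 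This is why only $\pi/2$, rather than the naive $2$, is attainable, and the matching construction in the paper's second main theorem shows sharpness.

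The hard part will be the uniform control of the remainder $\sum_{\rho\ne\rho_0,\overline{\rho_0}}\widehat K(\rho)y^\rho/\rho$ along a useful sequence of $y$. Classically this is routine using $N(T)\ll T\log T$ and standard zero-density bounds; in the Beurling generality such tools are absent, and here the zero-density theorems from \cite{Rev-D} become essential. I would combine (i) the decay $\widehat K(\rho)\ll 1/|\rho|$ coming from the short support of $K$, (ii) the finite upper vertical density of zeros from \cite{Rev-D}, and (iii) a Dirichlet-type simultaneous-approximation argument selecting $y_n$ at which $\cos(\gamma_0\log y_n+\varphi_0)$ is near its extremum while the phases of the remaining zeros do not conspire to cancel. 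Balancing the support size $h$ of $K$ against the truncation height of the zero-sum and the allowed range of $y_n$ is the most delicate point, and will occupy most of the proof.
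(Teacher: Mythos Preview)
Your outline has a genuine gap at exactly the point you flag as ``hard'': controlling the other zero-contributions. You propose to select $y_n$ by a Dirichlet-type simultaneous approximation so that the $\rho_0,\overline{\rho_0}$ pair is near its maximum ``while the phases of the remaining zeros do not conspire to cancel''. But there may be many zeros $\rho$ with $\Re\rho=\beta_0$ and $|\Im\rho|$ comparable to $\gamma_0$; for these your kernel gives $|\widehat K(\rho)|\approx\|K\|_1$, so each such term is of the same order $y^{\beta_0}/|\rho|$ as the main one, and no amount of zero-density information (which only limits how many zeros lie far to the right) prevents destructive interference. Simultaneous Diophantine approximation over an unbounded or uncontrolled set of frequencies gives no useful lower bound. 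This is precisely Littlewood's ``interference difficulty'', and your sketch does not supply a tool to overcome it.

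The paper's mechanism is different from what you describe. It takes a Gaussian weight $e^{ms^2+Ms}$ (not a short-support kernel), forms $S=U(\rho_0)+U(\overline{\rho_0})$, and evaluates $S$ two ways. On one side, assuming $|\Delta(x)|\le K x^{\beta_0}$ on the relevant interval, one gets $|S|\le \tfrac{4K|\rho_0|}{\pi}(1+o(1))$; the factor $4/\pi$ arises because the Gaussian average of $|\cos|$ is $2/\pi$ (Lemma~\ref{l:estimates}(iii)), not from the Dirichlet integral. On the other side, contour-shifting turns $S$ into a power sum $\sum e^{m\lambda(\rho)}+e^{m\overline{\lambda(\rho)}}$ over nearby zeros, with the $\rho_0$ and $\overline{\rho_0}$ terms equal to $1$; the Modified Cassels power-sum theorem (Lemma~\ref{l:Cassels}) then guarantees an $m$ in a prescribed range with $|S|\ge 2-o(1)$. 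Comparing gives $K\ge(\pi/2-\ve)/|\rho_0|$. Note also that the proof uses only the crude zero-counting Lemmas~\ref{l:Littlewood}--\ref{l:zerosinrange}, not the density theorem of \cite{Rev-D} (which requires the extra Conditions~B and~G not assumed here). Your attribution of $\pi/2$ to Gibbs overshoot belongs to the \emph{sharpness} construction (Theorem~\ref{th:Binterference}), not to the oscillation proof.
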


\begin{theorem}\label{th:Binterference} Let $\ve>0$, $\beta_0\in (1/2,1)$ and $1/2<r<\beta_0$ be given.

Then there exists an arithmetical semigroup $\GG$ satisfying Axiom A with $\theta=r$ (but with no better value), such that there is a zeta-zero $\rho_0=\beta_0+i\gamma_0$ of the Beurling zeta function $\zp$ with $\Re \rho_0=\beta_0$, and $|\Delta(x)|\leq (\pi/2+\ve) \frac{x^{\beta_0}}{|\rho_0|}$ holds true for all sufficiently large values of $x$.
\end{theorem}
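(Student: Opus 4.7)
The plan is to read the Riemann--von Mangoldt formula \eqref{eq:RvM} backwards. First I would manufacture a deterministic model $\psi_0(x)=x+\Delta_0(x)$ whose associated (meromorphic) zeta-like function $\zeta_0$ has $\rho_0$ and $\overline{\rho_0}$ as genuine zeros, while already $|\Delta_0(x)|\le (\pi/2+\ve/2)\,x^{\beta_0}/|\rho_0|$, and then invoke the Broucke--Vindas sharpening of the Diamond--Montgomery--Vorhauer probabilistic construction to realise the induced counting function $\N_0$, up to an $O(x^r)$ error, as the counting function of an actual Beurling semigroup $\GG$ with primes $\PP$. Because $r<\beta_0$, a standard partial-summation / Perron transfer converts the $O(x^r)$ slack on $\N$ into $\psip(x)=\psi_0(x)+O(x^r\log x)$, which is absorbed into the remaining $\ve/2$ to yield $|\Dp(x)|\le(\pi/2+\ve)\,x^{\beta_0}/|\rho_0|$. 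Planting a deterministic $\Omega(x^r)$ perturbation into $\N_0$ ensures that $\theta=r$ is optimal.

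The heart of the construction is the model $\Delta_0$. I would seek it in the form
\[
\Delta_0(x)=\frac{2x^{\beta_0}}{|\rho_0|}\, S\bigl(\gamma_0\log x-\arg\rho_0\bigr),
\]
with $S(u)=\sum_{k=1}^{K} a_k\, k^{-1}\sin(k u)$ a real sine polynomial. Aligning its low-order Fourier coefficients with those of the classical sawtooth $\tfrac12(\pi-u)$ on $(0,2\pi)$ forces $\zeta_0$ to vanish at $\rho_0+i(k-1)\gamma_0$ for $k=1,\dots,K$, including $\rho_0$ itself; and since the sawtooth has sup-norm exactly $\pi/2$, this is where the mysterious constant of Theorem \ref{th:onezeroosci} is matched. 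The challenge is to produce a truncation $S$ that \emph{exactly} preserves the leading coefficient $a_1=1$ (so the prescribed zero really lands on $\rho_0$ and not merely near it) while $\|S\|_\infty\le\pi/2+\ve/3$. Plain Fourier truncation fails by the Gibbs overshoot; Fej\'er summation kills $a_1$. I would therefore use a de la Vall\'ee Poussin type kernel with a long initial block of unit weights followed by a smooth trapezoidal cutoff, the block length being tuned so that the remaining tail contributes at most $\ve/3$ to the sup-norm.

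With $\Delta_0$ in hand and $\N_0$ obtained by integration, the Broucke--Vindas theorem delivers $\PP$ with $\N(x)-\N_0(x)\ll x^{r}$; then $\zp$ agrees with $\zeta_0$ up to a function holomorphic in the halfplane $\Re s>r$, and an application of Rouch\'e's theorem locates a zeta-zero in an arbitrarily small neighbourhood of $\rho_0$. A final deterministic perturbation of finitely many primes (or the insertion of a single auxiliary generator of carefully chosen norm) can, if necessary, slide that zero onto $\rho_0$ exactly without affecting the asymptotics of $\Dp(x)$.

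The main obstacle will be the sine polynomial itself: a Jackson--de la Vall\'ee Poussin style approximation of a discontinuous sawtooth, constrained to preserve its leading Fourier coefficient \emph{exactly} and to hold its sup-norm within $\ve/3$ of $\pi/2$, requires a careful two-scale choice of kernel weights rather than any off-the-shelf summability procedure. A subtler technical point is the compatibility of the sharp Axiom A exponent $\theta=r$ with the probabilistic step, which delivers only $\R(x)=O(x^r)$: one must show that the embedded $\Omega(x^r)$ oscillation in $\R_0$ survives the Broucke--Vindas perturbation, so that no exponent below $r$ is attainable. Both of these are where the ``nontrivial construction'' advertised in the abstract has to be made precise.
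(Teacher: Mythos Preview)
Your high-level strategy matches the paper's: build a sine polynomial $S$ with $\|S\|_\infty\le\pi/2+\ve$ and leading coefficient $1$, use it to prescribe a finite symmetric set $\SSS$ of zeros on the line $\Re s=\beta_0$, and invoke the Broucke--Vindas random prime construction to realise this as a genuine Beurling system. But two of your implementation steps are in the wrong direction, and this is where the plan breaks.

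First, the Broucke--Vindas theorem (Theorem~\ref{th:BV}) approximates a target for $\pip$, not for $\N$. The paper therefore starts from a target $f_0(x)=x+\tfrac1r x^r+2M\sqrt x-\sum_{\rho\in\SSS}x^\rho/\rho$ for $\psip$, converts it to a target $F$ for $\pip$, applies Broucke--Vindas to get $|\pip(x)-F(x)|\le 2$, and only \emph{then} works forward to $\psip(x)=f_0(x)+O(\sqrt x)$ and---via an explicit product formula for $\zp$ plus a Perron/contour argument---to Axiom~A for $\N$. Your proposed transfer in the opposite direction, ``$\N(x)-\N_0(x)=O(x^r)\Rightarrow\psip(x)-\psi_0(x)=O(x^r\log x)$ by standard partial summation'', is not standard at all: that implication is essentially an effective PNT and would require exactly the zero-free information you do not yet have (indeed $\zeta_0$ is designed to vanish at $\rho_0$, obstructing any contour shift past $\Re s=\beta_0$). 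Relatedly, because the paper builds from the $\psi$-side, the zeros of $\zp$ land \emph{exactly} on $\SSS$ via the explicit factorisation $\zp(s)=cQ(s)H(s)$ with $H$ nonvanishing in $\Re s>1/2$; no Rouch\'e argument and no ad hoc ``sliding'' of zeros by perturbing finitely many primes is needed (nor is it clear how one would carry such a step out). The sharpness $\theta=r$ also falls out for free: the planted $\tfrac1r x^r$ term in $f_0$ forces a genuine simple pole of $\zp$ at $s=r$, so no $\Omega$-survival argument is required.

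Second, your proposed sine-polynomial construction will not work as written. A de~la~Vall\'ee~Poussin mean $V_n=2\sigma_{2n}-\sigma_n$ of a bounded function $f$ is controlled only by $\|V_nf\|_\infty\le 3\|f\|_\infty$, so a trapezoidal cutoff that preserves the first coefficient does not by itself suppress the Gibbs overshoot down to $\pi/2+\ve$. The paper treats this step as a black box (Lemma~\ref{l:finitesum}), citing the probabilistic construction of \cite{RevAA} and the deterministic one via entiers friables of de~la~Bret\`eche--Tenenbaum \cite{BT}; both produce a \emph{sparse} subsequence of frequencies $(2n_k+1)$ rather than any smooth reweighting of consecutive ones.
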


Theorem \ref{th:Bonezeroosci} is very close to Theorem 1 of \cite{RevAA}, but there we did not think of the general Beurling situation. The zeta-type function class $Z$ appearing there was featured to cover algebraic number fields, but the special conditions we formulated are somewhat different than our conditions here. E.g. here we assume the "Average Ramanujan Condition" G--which is needed not directly in this part but in part two \cite{Rev-D} of the series to derive effective zero density estimates e.g.--while in \cite{RevAA} the first condition I.) reads only as $\N(x)\le K_1x\log^{K_2} x$, which we will get easily also for our situation from Axiom A, see \eqref{eq:Psirough} below. On the other hand here we assume a meromorphic continuation of $\zeta$ only to $\sigma>\theta$, while in \cite{RevAA} the same is assumed in Condition II.) for $\sigma>0$. So, the current setup is slightly different in several of its features from that of \cite{RevAA}.

That explains the difficulty in getting Theorem \ref{th:Binterference}, too. Here we need to obtain not only some nonnegative measure, the Mellin transfrom of which would be our constructed zeta function $\zeta$, but we need to make $\N(x)$ integer-valued and generated by a prime number system $\PP$. This can only be done by a proper random construction, based on \cite{DMV} and the improved version of \cite{BrouckeDebruyneVindas} and \cite{BrouckeVindas}.

In fact, along our way to this construction we will also prove an intermediate result, c.f. Theorem \ref{thm:Beurlingconstruction}, technical here but is likely to bear some interest on its own, which essentially says that for any prescribed finite system $\SSS$ of would-be zeta-zeros, one can construct a corresponding system of Beurling primes $\PP$ with $\Dp(x)\approx x-\sum_{\rho \in \SSS} x^\rho/\rho$ and $\N(x)$ satisfying Axiom A.

\section{Some auxiliary lemmas}\label{sec:aux}

\begin{lemma}\label{l::integralformula} For $a>0$, $b\in\CC$ and $c\in\RR$, we have
\begin{equation}\label{eq:integralformula}
\frac{1}{2\pi i} \int_{c-i\infty}^{c+i\infty} e^{as^2+bs} ds =
\frac{1}{2\sqrt{\pi a}} \exp\left(-\frac{b^2}{4a}\right).
\end{equation}
\end{lemma}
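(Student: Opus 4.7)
The plan is to complete the square in the exponent and reduce the contour integral to a standard Gaussian integral. Since $a>0$, we may write
$$as^2+bs = a\Bigl(s+\tfrac{b}{2a}\Bigr)^2 - \tfrac{b^2}{4a},$$
so that the constant factor $e^{-b^2/(4a)}$ pulls out in front of the integral. It then remains to evaluate
$$\frac{1}{2\pi i}\int_{c-i\infty}^{c+i\infty} e^{a(s+b/(2a))^2}\, ds.$$

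Next I would make the linear substitution $u = s + b/(2a)$, with $du = ds$, so that the new contour becomes the vertical line $\Re u = c + \Re(b)/(2a)$, traversed upward. The essential observation is that the integral $\int e^{au^2}\, du$ has the same value along any vertical line: since $a$ is a positive real, $|e^{au^2}| = \exp\bigl(a(\Re u)^2 - a(\Im u)^2\bigr)$, which decays like a Gaussian in $|\Im u|$ on any fixed vertical line. Applying Cauchy's theorem to a rectangle with horizontal sides at $\Im u = \pm T$ joining two vertical lines and letting $T\to\infty$, the horizontal contributions vanish, so the contour may be shifted freely to the imaginary axis $\Re u = 0$.

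On the imaginary axis, parametrizing $u = i\tau$ with $\tau\in\RR$ gives
$$\int_{-i\infty}^{i\infty} e^{au^2}\, du = i\int_{-\infty}^{\infty} e^{-a\tau^2}\, d\tau = i\sqrt{\pi/a}$$
by the classical Gauss integral. Dividing by $2\pi i$ produces $\frac{1}{2\sqrt{\pi a}}$, and multiplying by the previously extracted factor $e^{-b^2/(4a)}$ yields the claimed identity.

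The only point requiring any care is the contour shift in the $u$-plane when $b$ is complex, since the translation moves the original vertical line at $\Re s = c$ to one at $\Re u = c + \Re(b)/(2a)$, which must in turn be deformed to the imaginary axis. However, as observed, the uniform Gaussian decay of $|e^{au^2}|$ on vertical strips makes this deformation entirely routine, so no serious obstacle arises and the proof amounts to a completion of the square followed by the standard Gauss evaluation.
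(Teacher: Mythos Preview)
Your proof is correct: completing the square, shifting the vertical contour using the Gaussian decay of $|e^{au^2}|$ on vertical strips, and then invoking the classical Gauss integral is exactly the standard route to this identity. The paper itself does not prove this lemma but simply refers to \cite{RevAA} for the proofs of Lemmas~\ref{l::integralformula}--\ref{l:Cassels}; your argument is precisely the standard one that such a reference would contain, so there is nothing to add or correct.
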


\begin{lemma}\label{l:estimates} The following estimates hold true.
\begin{itemize}
\item{(i)} For any $B\geq 1/2$,
$$ \int_B^{\infty} e^{-x^2} dx < e^{-B^2}.$$
\item{(ii)} For any $\lambda \geq 1$, $0<\alpha <1$ and $x\geq 1$ we have
$$ \log^{\lambda } x \leq e^{\lambda /\alpha+\lambda ^2} x^{\alpha}.$$
\item{(iii)} For any $P>0$ and $R\in\RR$ we have
$$\int_{-\infty}^{\infty} |\cos (Py+R)|\, e^{-y^2} dy  \leq
\frac{2}{\sqrt{\pi}}+\frac{2\pi}{P}.
$$
\end{itemize}
\end{lemma}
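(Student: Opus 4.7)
}

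For part (i), the plan is the classical trick of inserting a factor that turns the integrand into an exact derivative. Since $B\ge 1/2$, on the range of integration we have $2x\ge 1$, hence $e^{-x^2}\le 2x\, e^{-x^2}$, with strict inequality on a set of positive measure. Therefore
\[
\int_B^{\infty} e^{-x^2}\,dx \;<\; \int_B^{\infty} 2x\, e^{-x^2}\,dx \;=\; \bigl[-e^{-x^2}\bigr]_B^{\infty} \;=\; e^{-B^{2}}.
\]

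For part (ii), I would take logarithms and reduce to an elementary one-variable optimization. The inequality is trivial at $x=1$, so set $t:=\log x>0$ and rewrite it as
\[
\lambda \log t \;\le\; \frac{\lambda}{\alpha}+\lambda^{2}+\alpha t,
\qquad \text{i.e.}\qquad \log t-\frac{\alpha t}{\lambda} \;\le\; \frac{1}{\alpha}+\lambda.
\]
The function $f(t):=\log t-\alpha t/\lambda$ attains its maximum at $t_{0}=\lambda/\alpha$, with value $\log(\lambda/\alpha)-1$. It thus suffices to verify the purely numerical inequality $\log(\lambda/\alpha)-1\le 1/\alpha+\lambda$, which follows by splitting the left side as $\log\lambda+\log(1/\alpha)$ and applying the standard bounds $\log\lambda\le\lambda$ (valid for $\lambda\ge 1$) and $\log(1/\alpha)\le 1/\alpha$ (valid for $0<\alpha<1$).

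For part (iii), which is the main obstacle of the lemma, the key observation is that $|\cos(Py+R)|$ has period $\pi/P$ and mean value $2/\pi$, so one expects
\[
\int_{-\infty}^{\infty}|\cos(Py+R)|\,e^{-y^{2}}\,dy\;\approx\;\frac{2}{\pi}\int_{-\infty}^{\infty}e^{-y^{2}}\,dy \;=\; \frac{2}{\sqrt{\pi}},
\]
with an error coming from the variation of the Gaussian weight across one period. My plan is to partition $\RR$ into the intervals $I_{k}=[k\pi/P,(k+1)\pi/P]$, on each of which the exact identity $\int_{I_{k}}|\cos(Py+R)|\,dy=2/P$ holds. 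Estimating $e^{-y^{2}}$ on $I_{k}$ by its maximum $M_{k}$ gives
\[
\int_{-\infty}^{\infty}|\cos(Py+R)|\,e^{-y^{2}}\,dy \;\le\; \frac{2}{P}\sum_{k\in\ZZ}M_{k}.
\]
Since $e^{-y^{2}}$ is monotone on each half-line $(-\infty,0]$ and $[0,\infty)$, comparing the upper Riemann sum $\sum_{k}M_{k}\cdot(\pi/P)$ to $\int e^{-y^{2}}\,dy=\sqrt{\pi}$ yields $\sum_{k}M_{k}\le P\sqrt{\pi}/\pi+O(1)$, with the $O(1)$ accounting for the one interval straddling $0$ and for the telescoping boundary terms on each half-line. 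Multiplying by $2/P$ yields the desired form $2/\sqrt{\pi}+2\pi/P$. The only delicate point is to track the constant in the $O(1)$ carefully so that it is exactly $\pi$ rather than a larger number; a clean way to enforce this is to shift the partition so that $0$ is an endpoint, bounding each half-line separately by one telescoping sum plus one boundary term bounded by $1\cdot (\pi/P)$.
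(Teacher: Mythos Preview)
The paper does not actually prove this lemma: immediately after stating it, the author writes ``For the proofs of the above see \cite{RevAA}.'' So there is no in-paper argument to compare against, and your proposal stands or falls on its own.

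Your arguments for (i) and (ii) are correct and standard. For (iii) your strategy is also correct, and in fact it delivers a slightly better constant than you feared. With the partition $I_k=[k\pi/P,(k+1)\pi/P]$ that you wrote down, $0$ is \emph{already} an endpoint, so no shift is needed. On $[0,\infty)$ the function $e^{-y^2}$ is decreasing, so $M_k=e^{-(k\pi/P)^2}$ for $k\ge 0$ and the telescoping estimate gives
\[
\sum_{k\ge 0} M_k\cdot\frac{\pi}{P}\ \le\ \int_0^{\infty} e^{-y^2}\,dy + \frac{\pi}{P}
\ =\ \frac{\sqrt{\pi}}{2}+\frac{\pi}{P};
\]
by symmetry the same holds for $k<0$. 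Hence $\sum_{k\in\ZZ}M_k\le P/\sqrt{\pi}+2$, and multiplying by $2/P$ yields
\[
\int_{-\infty}^{\infty}|\cos(Py+R)|\,e^{-y^2}\,dy\ \le\ \frac{2}{\sqrt{\pi}}+\frac{4}{P}\ \le\ \frac{2}{\sqrt{\pi}}+\frac{2\pi}{P}.
\]
So the ``delicate point'' you flagged is not delicate at all: the natural telescoping already gives $4/P$, comfortably inside the stated $2\pi/P$. Your last sentence about shifting the partition is superfluous and slightly muddled, but the argument is sound.
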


\begin{lemma}[{\bf Modified Cassels' power-sum
theorem}]\label{l:Cassels} If $1 \leq k$, $n\in \NN$, and
$w_1=\dots=w_k=1$, $w_{k+2j}=\overline{w_{k+2j-1}}$
($j=1,\dots,n$) are $k+2n$ complex numbers with
$w_{\ell}=r_{\ell} e^{i\alpha_{\ell}}$, ($|\alpha_{\ell}|\leq
\pi,\, \ell=1,\dots,k+2n$) then for any $H>0$ we have
$$
\max_{H\leq L \leq (2n+1)H} \Re \left( \sum_{\ell=1}^{k+2n} r_{\ell}^L
e^{i\alpha_{\ell} L} \right) \geq k.
$$
\end{lemma}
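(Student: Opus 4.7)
I would prove this by a continuous version of the Tur\'an--Cassels power sum method. First, normalize by setting $\tau = L/H \in [1, 2n+1]$ and $\xi_j := w_{k+2j-1}^H$ for $j = 1, \dots, n$. Because $w_{k+2j} = \overline{w_{k+2j-1}}$ and $w_1 = \dots = w_k = 1$, the sum under consideration equals $k + 2\Re \sum_{j=1}^n \xi_j^\tau$; so it is enough to find some $\tau^\ast \in [1, 2n+1]$ for which $\Re \sum_{j=1}^n \xi_j^{\tau^\ast} \ge 0$.

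Assume for contradiction that $\Re \sum_j \xi_j^\tau < 0$ throughout $[1, 2n+1]$. I would then aim to produce a nonzero nonnegative Borel measure $\mu$ on $[1, 2n+1]$ with
$$\int_1^{2n+1} \xi_j^\tau\,d\mu(\tau) \;=\; 0 \qquad (j = 1,\dots,n),$$
for then integrating the strict inequality against $\mu$ yields the contradiction $0 < 0$. By Carath\'eodory's theorem in $\CC^n \simeq \RR^{2n}$, existence of such a $\mu$ supported on at most $2n+1$ atoms is equivalent to $0 \in \CC^n$ lying in the convex hull of the continuous curve $\Gamma := \{(\xi_1^\tau,\dots,\xi_n^\tau) : \tau \in [1, 2n+1]\}$. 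If the origin were outside this convex hull, Hahn--Banach separation would provide $(c_1,\dots,c_n) \in \CC^n \setminus \{0\}$ with $\Re \sum_j c_j \xi_j^\tau > 0$ throughout the interval, reducing the task to ruling out this separating functional.

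The resulting exponential polynomial $\tau \mapsto \Re \sum_j c_j \xi_j^\tau$ has at most $2n$ exponents $\log \xi_j$ and $\log \bar \xi_j$, whose imaginary parts are controlled by the hypothesis $|\alpha_\ell| \le \pi$. A P\'olya--Chebyshev style bound on the zeros of a real exponential polynomial with $2n$ complex frequencies, applied to the interval of length $2n$, forces at least one sign change, contradicting strict positivity; this is precisely where the factor $(2n+1)$ in the right endpoint of the interval is sharp. The zero-counting / sign-change bookkeeping is the main obstacle, since in principle widely different magnitudes $|\xi_j|$ can create sharply unequal growth at the two endpoints. A more constructive alternative that bypasses Hahn--Banach would be to exhibit directly $2n+1$ nodes in $[1, 2n+1]$ together with nonnegative quadrature weights via a Lagrange-interpolation identity at $\{\xi_j, \bar\xi_j\}_{j=1}^n$, in the spirit of earlier proofs of modified Cassels-type theorems, yielding the desired $\mu$ in fully explicit form.
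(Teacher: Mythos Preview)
The paper does not give a proof of this lemma; it simply refers to \cite{RevAA}. Your proposal, however, is not merely a sketch with gaps --- the chosen route is structurally blocked.

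After the correct reduction to finding $\tau\in[1,2n+1]$ with $g(\tau):=\Re\sum_{j=1}^{n}\xi_j^{\tau}\ge 0$, you assume $g<0$ throughout and then look for a nonzero nonnegative measure $\mu$ with $\int\xi_j^{\tau}\,d\mu(\tau)=0$ for every $j$, equivalently $0\in\mathrm{conv}(\Gamma)$ for $\Gamma(\tau)=(\xi_1^{\tau},\dots,\xi_n^{\tau})$. But the standing hypothesis $g<0$ says precisely that $\Gamma$ --- and hence all of $\mathrm{conv}(\Gamma)$ --- lies in the open half-space $\{w\in\CC^{n}:\Re(w_1+\cdots+w_n)<0\}$, whose boundary contains $0$; so $0\notin\mathrm{conv}(\Gamma)$ and no such $\mu$ exists. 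Concretely, the separating functional you propose to ``rule out'' is handed to you by the contradiction hypothesis itself: take $c_j\equiv-1$. No P\'olya--Chebyshev zero count can exclude it, and in any case such bounds give \emph{upper}, not lower, estimates on the number of real zeros of an exponential polynomial (e.g.\ $c_1=1$, $\xi_1=e$ yields $e^{\tau}>0$ with no sign change whatsoever). Your ``explicit quadrature'' alternative is aimed at constructing the very same annihilating measure and fails for the same reason.

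A workable proof must therefore abandon the annihilating-measure idea. The argument in \cite{RevAA} passes to the discrete nodes $L=\nu H$, $\nu=1,\dots,2n+1$, and produces a nonnegative linear combination of the real power sums $p_\nu=\sum_{j}(z_j^{\nu}+\bar z_j^{\nu})$ (with $z_j=w_{k+2j-1}^{H}$) that is forced to be $\ge 0$; the crucial point is that the coefficients are allowed to depend on the $z_j$ through their elementary symmetric functions, rather than being chosen to kill each $z_j^{\nu}$ separately.
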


For the proofs of the above see \cite{RevAA}.


\section{Auxiliary results on the Beurling $\zeta$ function}\label{sec:basics}

In the following, we list a number of basic estimates and technical lemmas on the behavior of the Beurling $\zeta$ function.
Most of them are well-known, see, e.g., \cite{Knopf} or \cite{Beke} or \cite{DZ-16}. In \cite{Rev-MP} we elaborated on their proofs only for the explicit handling of the arising constants in these estimates. However, the Riemann-von Mangoldt formula in Proposition \ref{l:vonMangoldt} and the Carlson type density estimate Theorem \ref{th:density} were first given in \cite{Rev-MP} and \cite{Rev-D}, respectively. Here for the reader's convenience we recall those which we need here; for their proofs see \cite{Rev-MP} and \cite{Rev-D}. In this regard, however, we need to mention a few slight corrections, too, which change the values of the constants compared to \cite{Rev-MP}; for more explanations on the corrections see \cite{Rev-D}, where we have described them in more detail.

\subsection{Estimates for the number of zeros of  $\zeta$}\label{sec:zeros}

\begin{lemma}\label{l:Littlewood} Let $\theta<b<1$ and consider
any height $T\geq 5
$ together with the rectangle $Q:=Q(b,T):=\{
z\in\CC~:~ \Re z\in [b,1],~\Im z\in (-T,T)\}$. Then the number of
zeta-zeros $N(b,T)$ in the rectangle $Q$ satisfy
\begin{equation}\label{zerosinth-corr}
N(b,T)\le \frac{1}{b-\theta}
\left\{\frac{1}{2} T \log T + \left(2 \log(A+\kappa) + \log\frac{1}{b-\theta} + 3 \right)T\right\}.
\end{equation}
\end{lemma}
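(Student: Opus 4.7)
The plan is to run a Littlewood-style zero-counting argument for $\zeta$ on the rectangle $R_\eta:=[\theta+\eta,\,2]\times[-T,T]$, with a small auxiliary parameter $\eta>0$ chosen at the end. Because each zero $\rho=\beta+i\gamma$ in $R_\eta$ satisfies $\beta-\theta-\eta\ge 0$, while every zero counted by $N(b,T)$ satisfies $\beta-\theta-\eta\ge b-\theta-\eta$, one has the key inequality
\[ (b-\theta-\eta)\,N(b,T)\;\le\;\sum_{\rho\in R_\eta}(\beta-\theta-\eta). \]
Davenport's meromorphic form of Littlewood's lemma, applied to $\zeta$ whose only singularity inside $R_\eta$ is the simple pole at $s=1$, rewrites the right-hand side as
\[ (1-\theta-\eta)+\frac{1}{2\pi}I_{\theta+\eta}-\frac{1}{2\pi}I_{2}+H, \qquad I_a:=\int_{-T}^{T}\log|\zeta(a+it)|\,dt, \]
with $H$ denoting the horizontal $\arg\zeta$ contribution from the edges $\Im s=\pm T$.

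The estimates feeding into these integrals come directly from Axiom A. From $\zeta(s)=\kappa/(s-1)+\int_{1-0}^{\infty}x^{-s}\,dR(x)$ and $|R(x)|\le Ax^\theta$ an integration by parts yields the uniform bound $|\zeta(s)-\kappa/(s-1)|\le A|s|/(\sigma-\theta)$ on $\sigma>\theta$, whence $|\zeta(\theta+\eta+it)|\le (A+\kappa)|s|/\eta$. The exact antiderivative evaluation $\int_{-T}^T\log|\theta+\eta+it|\,dt=2T\log T-2T+O(1)$ then gives
\[ I_{\theta+\eta}\;\le\;2T\log T+2T\log\!\tfrac{A+\kappa}{\eta}+O(T). \]
On the right edge, the Euler product lower bound $|\zeta(2+it)|\ge 1/\zeta(2)$ forces $I_{2}\ge -2T\log\zeta(2)=O(T)$, so $-I_{2}/(2\pi)$ only contributes an extra $O(T)$ to the upper bound. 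The horizontal piece $H$ is controlled in the standard way by bounding each $|\arg\zeta(\sigma\pm iT)|$ via the count of sign changes of $\Re\zeta(\sigma\pm iT)$ on $\sigma\in[\theta+\eta,2]$, and in turn bounding that count by Jensen's formula on a disk of fixed radius centered at a point on $\Re s=2$ where $\zeta$ is pinned between two positive constants; this gives $H=O(\log T)$.

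Putting everything together, dividing through by $b-\theta-\eta$, and choosing $\eta$ proportional to $b-\theta$ (for instance $\eta=(b-\theta)/2$, so that $b-\theta-\eta=(b-\theta)/2$ and $\log(1/\eta)\le\log(1/(b-\theta))+1$) produces a bound of the shape claimed. The main obstacle is not conceptual but arithmetic: clean tracking of the multiplicative constants so that the explicit coefficients $\tfrac12$ in front of $T\log T$ and $2\log(A+\kappa)+\log\frac{1}{b-\theta}+3$ in front of $T$ emerge, which requires the exact integral evaluation rather than a sup-bound for $I_{\theta+\eta}$, tight $O(\log T)$ control of the horizontal $\arg$ integrals, and a judicious choice of $\eta$. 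A routine technicality is that Littlewood's lemma requires $\zeta$ to have no zeros on $\partial R_\eta$; this is arranged by a small perturbation of $\eta$ (zeros being discrete), followed by a limit.
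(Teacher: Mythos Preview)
Your approach is the standard Littlewood--Jensen zero-counting scheme, and it is essentially the route taken in the referenced proof (the present paper does not reproduce the argument but cites \cite{Rev-MP}, with constant corrections discussed in \cite{Rev-D}); the ingredients you list---the Axiom~A bound $|\zeta(s)-\kappa/(s-1)|\le A|s|/(\sigma-\theta)$, the Euler-product lower bound on $\Re s=2$, and the Jensen control of $\arg\zeta$ along the horizontal edges---are exactly what is used there.

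One small arithmetical point worth flagging: your illustrative choice $\eta=(b-\theta)/2$ gives, after dividing by $b-\theta-\eta=(b-\theta)/2$, a leading coefficient $\dfrac{2}{\pi}\approx 0.637$ in front of $\dfrac{T\log T}{b-\theta}$, which is slightly \emph{larger} than the $\tfrac12$ stated in the lemma. To land on $\tfrac12$ one needs $\eta\le(1-2/\pi)(b-\theta)$; since the secondary constants in \eqref{zerosinth-corr} (notably the $2\log(A+\kappa)$) are quite generous compared to what the method actually yields, there is ample slack to absorb the resulting $\log(1/\eta)$ term. So the outline is sound, but the parenthetical ``for instance $\eta=(b-\theta)/2$'' does not in fact deliver the stated leading constant without further adjustment.
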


\begin{lemma}\label{l:zerosinrange}
Let $\theta<b<1$ and consider any heights $T>R\geq 5 $ together
with the rectangle $Q:=Q(b,R,T):=\{ z\in\CC~:~ \Re z\in [b,1],~\Im
z\in (R,T)\}$.

Then the number of zeta-zeros $N(b,R,T)$ in the rectangle $Q$ satisfies
\begin{equation}\label{zerosbetween}
N(b,R,T) \leq\frac{1}{b-\theta} \left\{ \frac{4}{3\pi} (T-R) \left(\log\left(\frac{11.4 (A+\kappa)^2}{b-\theta}\right)T\right)  + \frac{16}{3}  \log\left(\frac{60 (A+\kappa)^2}{b-\theta}\right)T\right\}.
\end{equation}

In particular, for the zeroes between $T-1$ and $T+1$ we have for
$T\geq 6$
\begin{align}\label{zerosbetweenone}
N(b,T-1,T+1) \leq \frac{1}{(b-\theta)} \left\{ 6.2 \log T +
6.2 \log\left( \frac{(A+\kappa)^2}{b-\theta}\right) + 24 \right\}.
\end{align}
\end{lemma}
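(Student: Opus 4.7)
The plan is to apply Littlewood's lemma to $\log\zeta$ on an asymmetric rectangle cut out of the critical strip by the horizontal levels $R$ and $T$. Fix $\sigma_1:=(b+\theta)/2$ and $a:=2$, and consider the rectangle $\mathcal{R}:=[\sigma_1,a]\times[R,T]$. After perturbing $R$ and $T$ by at most half a unit to avoid zeros on the boundary, Littlewood's formula yields
$$ 2\pi\sum_{\rho\in\mathcal{R}}(\Re\rho-\sigma_1)=I_L-I_R+I_{\rm top}-I_{\rm bot}, $$
where $I_L$, $I_R$ are the vertical integrals of $\log|\zeta|$ along $\sigma=\sigma_1$ and $\sigma=a$, and $I_{\rm top}$, $I_{\rm bot}$ are the horizontal integrals of $\arg\zeta$. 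Every counted zero has $\Re\rho\ge b$, hence $\Re\rho-\sigma_1\ge (b-\theta)/2$, which gives
$$ \pi(b-\theta)\,N(b,R,T)\;\le\;I_L-I_R+I_{\rm top}-I_{\rm bot}. $$
This is the source of the $1/(b-\theta)$ prefactor in the statement; the rest is bookkeeping on the four boundary integrals.

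The right-hand integral $I_R$ is $O(T-R)$ since $|\zeta(a+it)|$ is bounded above and below on $\sigma=a>1$ via the Euler product. For the principal term $I_L$, Axiom A enters through the representation $\zeta(s)-\kappa/(s-1)=\int_1^\infty x^{-s}d\R(x)$; integration by parts gives $|\zeta(\sigma_1+it)|\le C(A+\kappa)(1+|t|)/(b-\theta)$, so
$$ I_L\;\le\;(T-R)\!\left[\log T+\log\!\tfrac{C(A+\kappa)}{b-\theta}\right], $$
which after absorbing numerical constants matches the $(T-R)\log(11.4(A+\kappa)^2 T/(b-\theta))$ term in \eqref{zerosbetween}. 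The horizontal arg-integrals $I_{\rm top}$, $I_{\rm bot}$ are handled by the Backlund trick: the variation of $\arg\zeta$ along a horizontal segment is at most $\pi$ times the number of sign changes of $\Re\zeta$ on that segment plus $O(1)$, and this count is controlled via Jensen's formula applied to $\tfrac12(\zeta(s)+\overline{\zeta(\bar s+2iT)})$ on a disk of radius slightly exceeding $a-\sigma_1$ centered near $a+iT$, using the pointwise bound of the previous step for the maximum modulus. This yields $|I_{\rm top}|,|I_{\rm bot}|=O(\log T+\log\tfrac{A+\kappa}{b-\theta})$, which after bookkeeping produces the $\tfrac{16}{3}\log(60(A+\kappa)^2 T/(b-\theta))$ contribution.

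The particular bound \eqref{zerosbetweenone} is then immediate by specializing $R=T-1$: then $T-R=2$ forces the first contribution to at most $\tfrac{8}{3\pi}\log(11.4(A+\kappa)^2T/(b-\theta))\approx 0.85\log T$, and combined with the $\tfrac{16}{3}\log(\cdots)\approx 5.33\log T$ of the second contribution one reaches the claimed $6.2\log T$ after collecting the $\log((A+\kappa)^2/(b-\theta))$ and constant parts (the $24$ in \eqref{zerosbetweenone} arises from $\tfrac{8}{3\pi}\log 11.4+\tfrac{16}{3}\log 60\approx 23.9$). The main obstacle will be the precise tracking of the numerical constants through the Backlund--Jensen estimate of $I_{\rm top}$ and $I_{\rm bot}$, because the Jensen disk must contain the horizontal segment while avoiding the pole of $\zeta$ at $s=1$, and the large factor $\log(1/(b-\theta))$ inherited from the estimate on $\sigma=\sigma_1$ must be propagated through the Jensen quotient $\log(M(r)/|f(z_0)|)/\log(r/R_1)$ without spoiling the leading constants $\tfrac{4}{3\pi}$ and $\tfrac{16}{3}$. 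The rest of the argument is a direct adaptation of the proof of Lemma \ref{l:Littlewood} from \cite{Rev-MP} to an asymmetric horizontal window.
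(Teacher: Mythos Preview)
The paper does not actually prove this lemma here; it is quoted verbatim from the companion paper \cite{Rev-MP} (with constant corrections discussed in \cite{Rev-D}), and Section~\ref{sec:basics} merely lists it for the reader's convenience. So there is no in-paper proof to compare against.

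That said, your approach via Littlewood's rectangle formula is the standard one and is exactly what the proof in \cite{Rev-MP} does; you correctly identify it as the asymmetric-window analogue of the proof of Lemma~\ref{l:Littlewood}. The outline is sound: the $\frac{1}{b-\theta}$ prefactor arises from $\Re\rho-\sigma_1\ge(b-\theta)/2$, the $(T-R)\log T$ main term from the left-edge integral of $\log|\zeta|$ via the Axiom-A bound $|\zeta(\sigma_1+it)|\ll(A+\kappa)|t|/(b-\theta)$, and the additive $\log T$ terms from the Backlund--Jensen argument bounds on the horizontal edges. Your own caveat that the precise constants require careful tracking through the Jensen quotient is accurate; note in particular that the appearance of $(A+\kappa)^2$ rather than $(A+\kappa)$ inside the logarithms in \eqref{zerosbetween} reflects specific choices made in \cite{Rev-MP} that your sketch does not reproduce.

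One slip in the specialization: you write ``$R=T-1$: then $T-R=2$'', but $T-(T-1)=1$. What you mean is to apply \eqref{zerosbetween} with the pair $(R,T)$ replaced by $(T-1,T+1)$, which indeed gives window length $2$. This puts $\log(T+1)$ rather than $\log T$ in the bound; for $T\ge 6$ the difference $6.2\log(1+1/T)\le 6.2/6$ is absorbed into the rounding that produces $24$. Your arithmetic on the leading constants ($\tfrac{8}{3\pi}+\tfrac{16}{3}\approx 6.18$ and $\tfrac{8}{3\pi}\log 11.4+\tfrac{16}{3}\log 60\approx 23.9$) is otherwise correct.
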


\subsection{The logarithmic derivative of the Beurling $\zeta$}
\label{sec:logder}

\begin{lemma}\label{l:borcar} Let $z=a+it_0$ with $|t_0| \geq e^{5/4}+\sqrt{3}=5.222\ldots$ and $\theta<a\leq 1$. With $\delta:=(a-\theta)/3$ denote by $S$ the
(multi)set of the $\ze$-zeroes (listed according to multiplicity)
not farther from $z$ than $\delta$. Then we have
\begin{align}\label{zlogprime}
\left|\frac{\ze'}{\ze}(z)-\sum_{\rho\in S} \frac{1}{z-\rho}
\right| & < \frac{9(1-\theta)}{(a-\theta)^2}
\left(22.5+14\log(A+\kappa)+14\log \frac{1}{a-\theta} + 5\log |t_0|\right).
\end{align}

Furthermore, for $0 \le |t_0| \le 5.23$ an analogous estimate (without any term containing $\log |t_0|$) holds true:
\begin{equation}\label{zlogprime-tsmall}
\left|\frac{\ze'}{\ze}(z)+\frac{1}{z-1}-\sum_{\rho\in S} \frac{1}{z-\rho}
\right|  \le \frac{9(1-\theta)}{(a-\theta)^2}
\left(34+14\log(A+\kappa)+18\log \frac{1}{a-\theta}\right).
\end{equation}
\end{lemma}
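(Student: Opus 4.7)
The proof I would give follows the classical Borel--Carath\'eodory / Hadamard scheme, applied to $(s-1)\zeta(s)$ on concentric disks about $z$, exactly as in the Riemann case treated in \cite{RevAA} (see also Titchmarsh, Ch.~3), but with every constant tracked explicitly. The Beurling-specific inputs are (i) an upper bound of the form $|\zeta(s)| \ll (|t|+1)$ valid throughout $\{\sigma \ge \theta + \eta\}$, derived from Axiom A by partial integration on $\zeta(s) - \kappa/(s-1) = \int_1^\infty x^{-s}\,dR(x)$ and listed among the preliminary lemmas of \cite{Rev-MP}; and (ii) the zero count of Lemma \ref{l:zerosinrange}, used to control the number of zeros in the ambient disk.

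Fix concentric disks $D_1 := \{|s-z| \le \delta\} \subset D_2 := \{|s-z| \le 3\delta/2\}$, both contained in $\{\sigma \ge \theta + \delta/2\}$ so that input (i) applies on $\partial D_2$. Let $S^* \supseteq S$ be the multi-set of $\zeta$-zeros in $D_2$; by Lemma \ref{l:zerosinrange} its cardinality is controlled by a constant times $\tfrac{1}{a-\theta}\bigl(\log|t_0|+\log(A+\kappa)+\log\tfrac{1}{a-\theta}+1\bigr)$. Introduce the Blaschke-normalized quotient
\[
\tilde F(s) := (s-1)\zeta(s)\prod_{\rho\in S^*} \frac{(3\delta/2)^2 - \overline{(\rho - z)}(s-z)}{(3\delta/2)\,(s-\rho)},
\]
which is holomorphic and nonvanishing on $D_2$ and satisfies $|\tilde F(s)| = |(s-1)\zeta(s)|$ on $\partial D_2$. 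Consequently $\log\tilde F$ is a single-valued holomorphic function on $D_2$, $\log|\tilde F|$ is harmonic, and on $\partial D_2$ its value is bounded by $\log|t_0| + O(\log(A+\kappa)+1)$; moreover, by harmonicity $\log|\tilde F(z)|$ equals the mean of the boundary values, yielding simultaneously upper and lower bounds of the same order.

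The Borel--Carath\'eodory theorem, applied to $\log\tilde F - \log\tilde F(z)$ on the radii $3\delta/2 > 0$, yields
\[
\left|\frac{\tilde F'}{\tilde F}(z)\right| \le \frac{C_0}{\delta}\Bigl(\max_{\partial D_2}\log|\tilde F| - \log|\tilde F(z)|\Bigr),
\]
whose right-hand side is of the claimed shape. Differentiating $\log\tilde F$ directly gives
\[
\frac{\tilde F'}{\tilde F}(z) = \frac{\zeta'}{\zeta}(z) + \frac{1}{z-1} - \sum_{\rho\in S^*}\frac{1}{z-\rho} - \frac{1}{(3\delta/2)^2}\sum_{\rho\in S^*}\overline{\rho - z},
\]
in which the last Blaschke-correction sum has each term of modulus at most $2/(3\delta)$ and the zeros $\rho\in S^*\setminus S$ satisfy $|z-\rho|\ge\delta$, so both absorb into the main bound as $O(|S^*|/\delta)$. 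Under the hypothesis $|t_0|\ge 5.22$, the term $1/(z-1)$ is also absolutely bounded and absorbed, yielding \eqref{zlogprime}. The case $|t_0|\le 5.23$ is identical except that $|z-1|$ is no longer bounded below, so one keeps $1/(z-1)$ on the left of the identity (producing \eqref{zlogprime-tsmall}); the absence of a $\log|t_0|$ term there reflects that $|t|$ itself is now bounded on $\partial D_2$, so input (i) produces a pure constant.

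\textbf{Main obstacle.} Every individual step is standard, so the substantive work is the bookkeeping. Three points require care: (a) producing input (i) with the sharp dependence on $A+\kappa$ and $1/(a-\theta)$; (b) using Lemma \ref{l:zerosinrange} in its form sharp enough that the subsequent multiplication by $1/\delta = 3/(a-\theta)$ yields the claimed $1/(a-\theta)^2$ factor and not a worse power; and (c) calibrating the two radii $\delta$ and $3\delta/2$ so that the Borel--Carath\'eodory prefactor, the zero-count contribution, and the $\log|t_0|$-term combine to yield the explicit coefficients $22.5, 14, 5$ (respectively $34, 14, 18$) asserted in the statement.
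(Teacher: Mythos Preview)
The paper does not prove this lemma here; it is quoted from \cite{Rev-MP} (see the opening paragraph of Section~\ref{sec:basics}), so there is no in-paper argument to compare against. Your overall strategy---Borel--Carath\'eodory applied to the logarithm of a zero-removed version of $(s-1)\zeta(s)$---is indeed the standard one, and the lemma's label \texttt{l:borcar} signals that the proof in \cite{Rev-MP} follows the same scheme.

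There is, however, a genuine gap in your execution. You center both disks at $z=a+it_0$ and then assert that ``by harmonicity $\log|\tilde F(z)|$ equals the mean of the boundary values, yielding simultaneously upper and lower bounds of the same order.'' The mean value identity is correct, but it does \emph{not} produce a lower bound: on $\partial D_2$ you only know $|\tilde F(s)|=|(s-1)\zeta(s)|$ from above (via your input (i)); you have no pointwise lower bound for $|\zeta|$ there, since zeros of $\zeta$ may sit arbitrarily close to $\partial D_2$ from outside. Consequently the quantity $A=\max_{\partial D_2}\log|\tilde F|-\log|\tilde F(z)|$ that feeds into Borel--Carath\'eodory is not controlled, and the argument stalls. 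Note also that with radius $3\delta/2=(a-\theta)/2$ the rightmost point of $D_2$ has abscissa $(3a-\theta)/2$, which for $a$ near $\theta$ lies well to the left of $\sigma=1$, so no Euler-product lower bound is available inside your disks either.

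The standard repair---and almost certainly what is done in \cite{Rev-MP}---is to center the large disk not at $z$ but at a point $s_0$ with $\Re s_0>1$ (for instance $s_0=1+(a-\theta)+it_0$, or $s_0=2+it_0$ with a correspondingly larger radius), where the Euler product gives $|\zeta(s_0)|\ge 1/\zeta(\Re s_0)$ outright. One then applies the Landau/Titchmarsh lemma (an analytic function $f$ with $|f(s)/f(s_0)|\le e^{M}$ on $|s-s_0|\le R$ satisfies $|f'/f-\sum 1/(s-\rho)|\ll M/R$ on a smaller concentric disk) and evaluates at $s=z$. The term $\log(1/(a-\theta))$ in the final bound arises precisely from $\log\zeta(\Re s_0)$ when $\Re s_0-1$ is of order $a-\theta$. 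Your points (a)--(c) about bookkeeping are well taken, but they only become the main obstacle after the centering is fixed.
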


\begin{lemma}\label{l:path-translates} For any given parameter
$\theta<b<1$, and for any finite and symmetric to zero set $\A\subset[-iB,iB]$ of cardinality $\#\A=n$, there exists a broken line $\Gamma=\Gamma_b^{\A}$, symmetric to the real axis and consisting of horizontal and vertical line segments only, so that its upper half is
$$
\Gamma_{+}= \bigcup_{k=1}^{\infty}
\{[\sigma_{k-1}+it_{k-1},\sigma_{k-1}+it_{k}] \cup
[\sigma_{k-1}+it_{k},\sigma_{k}+it_{k}]\},
$$
with $\sigma_j\in [\frac{b+\theta}{2},b]$, ($j\in\NN$),  $t_0=0$, $t_1\in[4,5]$ and $t_j\in
[t_{j-1}+1,t_{j-1}+2]$ $(j\geq 2)$ and satisfying that the
distance of any $\A$-translate $\rho+ i\alpha ~(i\alpha\in\A)$ of a $\zeta$-zero $\rho$ from any point $s=t+i\sigma \in \Gamma$ is at least $d:=d(t):=d(b,\theta,n,B;t)$ with
\begin{equation}\label{ddist-corr}
d(t):=\frac{(b-\theta)^2}{4n \left(12 \log(|t|+B+5) + 51 \log (A+\kappa) + 31 \log\frac{1}{b-\theta}+ 113\right)}.
\end{equation}
Moreover, the same separation from translates of $\zeta$-zeros holds also for the
whole horizontal line segments $H_k:=[\frac{b+\theta}{2}+it_k,2+it_k]$, $k=1,\dots,\infty$, and their reflections $\overline{H_k}:=[\frac{b+\theta}{2}-it_k,2-it_k]$, $k=1,\dots,\infty$, and furthermore the same separation holds from the translated singularity points $1+i\al$ of $\zeta$, too.
\end{lemma}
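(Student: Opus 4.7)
The plan is to build $\Gamma_+$ by a greedy pigeonhole / measure-theoretic construction: at step $k$ I select a height $t_k$ in the prescribed unit interval $I_k$ (namely $[4,5]$ for $k=1$ and $[t_{k-1}+1,t_{k-1}+2]$ for $k\ge 2$) and an abscissa $\sigma_{k-1}\in J:=[(b+\theta)/2,b]$ so that the horizontal segment at height $t_k$ and the vertical segment at abscissa $\sigma_{k-1}$ each stay at distance at least $d(t)$ from every $\A$-translate of every $\ze$-zero and from every $\A$-translate $1+i\alpha$ of the pole. The core input is Lemma~\ref{l:zerosinrange} applied at the shifted abscissa $b':=(b+\theta)/2$ (so that $b'-\theta=(b-\theta)/2$): summing \eqref{zerosbetweenone} over the $n$ elements $i\alpha\in\A$, the number of translated $\ze$-zeros whose imaginary parts fall into a given interval of length $\le 3$ is bounded by $C_1\,n/(b-\theta)$, where $C_1=C_1(|t|,B,A+\kappa,b-\theta)$ is logarithmic in $|t|+B$ and in $1/(b-\theta)$.

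\emph{Choosing $t_k$.} The heights $t\in I_k$ forbidden by having $|\Im(\rho+i\alpha)-t|<d$ for some translated zero or $|\alpha-t|<d$ for some translated pole cover a set of Lebesgue measure at most $(2d)\bigl(nC_1/(b-\theta)+n\bigr)$. Because of the factor $(b-\theta)^2/(4n)$ in \eqref{ddist-corr}, this quantity is strictly less than $|I_k|=1$, so a valid $t_k\in I_k$ exists and the entire line $\Im s=t_k$ lies at distance $\ge d$ from all translated zeros. Since by the Euler product \eqref{Euler} no $\ze$-zero has $\Re s>1$, the rightward extensions $H_k=[(b+\theta)/2+it_k,2+it_k]$ and their conjugates $\overline{H_k}$ are automatically well separated from all zero-translates.

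\emph{Choosing $\sigma_{k-1}$.} For the vertical segment $\{\sigma_{k-1}+it:t\in[t_{k-1},t_k]\}$, only translated zeros with imaginary part in the enlarged strip $[t_{k-1}-d,t_k+d]$ (of length $\le 3$, covered by two unit strips) can be at distance $<d$; each such zero blocks an interval of length $\le 2d$ of candidate abscissae around its real part. The resulting forbidden measure in $J$ is at most $(2d)\cdot 2nC_1/(b-\theta)$, which is strictly less than $|J|=(b-\theta)/2$ precisely because of \eqref{ddist-corr}; this is the binding constraint and it is what forces the $(b-\theta)^2$ numerator of $d(t)$. The translated pole $1+i\alpha$ is handled by the real-part gap $1-b$ in the typical regime $d\le 1-b$, or---if $1-b<d$---by restricting $J$ to $[(b+\theta)/2,1-d]$ before repeating the count. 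Reflecting the resulting upper half through the real axis produces the full symmetric $\Gamma$.

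The main technical obstacle is the constant-chasing required to pass from the loose pigeonhole inequalities above to the precise shape of \eqref{ddist-corr}: one must add the contributions of the two steps, absorb $\log((A+\kappa)^2/((b-\theta)/2))$ arising from \eqref{zerosbetweenone} applied at $b'$ into the coefficients $51\log(A+\kappa)+31\log(1/(b-\theta))+113$, and make the logarithmic dependence $\log(|t|+B+5)$ uniform across the admissible range of $t$. The greedy selection itself is elementary once these bounds are in place.
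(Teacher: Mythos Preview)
The paper does not prove this lemma here; it is quoted from \cite{Rev-MP} (see the opening of Section~\ref{sec:basics}), so there is no in-paper argument to compare against. Your approach---a pigeonhole/measure argument in which one first counts the $\A$-translates of $\zeta$-zeros lying in the relevant height range via Lemma~\ref{l:zerosinrange}, and then observes that each such translate excludes only a set of measure $2d$ from the candidate interval---is exactly the standard construction for such avoidance contours and is certainly what \cite{Rev-MP} does as well. Your identification of the binding constraint (the vertical-segment step, forcing the $(b-\theta)^2$ numerator) is correct, and your acknowledgment that the remaining work is constant-chasing is accurate.

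One small point deserves care. Your handling of the translated pole $1+i\alpha$ on the vertical segments relies on either $1-b\ge d$ or on ``restricting $J$ to $[(b+\theta)/2,1-d]$''. The second alternative is not compatible with the lemma's requirement $\sigma_j\le b$ unless $1-d\ge b$, i.e.\ unless you are back in the first case. In fact the first case always holds: since $d(t)\le (b-\theta)^2/(4n\cdot 113)<(1-\theta)^2/452$, while in every application in this paper (and in the natural regime of the lemma) $b$ stays bounded away from $1$ by at least a multiple of $1-\theta$, the inequality $1-b\ge d$ is automatic. For the lemma as stated with arbitrary $b<1$, you should simply note that if $1-b<d$ then $(b-\theta)^2>452n(1-b)$, which forces $1-b$ so small relative to $b-\theta$ that one can absorb the single extra excluded interval of length $2d$ around $\sigma=1$ into the pigeonhole count without affecting the outcome; alternatively, treat the pole exactly as one more ``zero'' in the $\sigma_{k-1}$ selection, contributing at most $n$ additional excluded intervals.
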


\begin{lemma}\label{l:zzpongamma-c} For any $0<\theta<b<1$ and symmetric to $\RR$ translation set $\A\subset [-iB,iB]$, on the broken line $\Gamma=\Gamma_b^{\A}$, constructed in the above Lemma \ref{l:path-translates}, as well as on the horizontal line segments $H_k:=[a+it_k,2+it_k]$ and  $\overline{H_k}$, $k=1,\dots,\infty$ with $a:=\frac{b+\theta}{2}$, we have uniformly for all $\alpha \in \A$
\begin{equation}\label{linezest-c}
\left| \frac{\ze'}{\ze}(s+i\alpha) \right| \le n
\frac{1-\theta}{(b-\theta)^{3}} \left(10 \log(|t|+B+5)+60\log(A+\kappa) + 42 \log\frac1{b-\theta}+ 140\right)^2.
\end{equation}
\end{lemma}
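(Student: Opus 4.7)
The plan is to apply Lemma \ref{l:borcar} pointwise at the point $z := s + i\alpha$, which yields a decomposition
$$\frac{\zeta'}{\zeta}(z) = \sum_{\rho \in S(z)} \frac{1}{z - \rho} + E(z),$$
where $S(z)$ collects the $\zeta$-zeros within the disk of radius $\delta := (\Re z - \theta)/3$ around $z$, and $E(z)$ is explicitly bounded by the right-hand side of \eqref{zlogprime} (or of \eqref{zlogprime-tsmall} when $|\Im z| < 5.23$). The broken line $\Gamma$ and the segments of $H_k, \overline{H_k}$ with $\Re s \in [a, 1]$ (where $a := (b+\theta)/2$) all satisfy $\Re z - \theta \ge (b-\theta)/2$, so $|E(z)|$ is of order $(1-\theta)/(b-\theta)^2$ times a linear log-factor in $\log(|t|+B+5)$. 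If \eqref{zlogprime-tsmall} has to be invoked, the additional $|z-1|^{-1}$ term is harmless because Lemma \ref{l:path-translates} also separates $z$ from the singularity at $s=1$ by distance at least $d$.

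Next, I bound the sum by $\bigl|\sum_{\rho \in S(z)} (z-\rho)^{-1}\bigr| \le |S(z)| \cdot d^{-1}$. By \eqref{ddist-corr}, $1/d$ has the form $\frac{4n}{(b-\theta)^2}$ times a single-log factor; and $|S(z)|$ is estimated by invoking \eqref{zerosbetweenone} in the rectangle with left edge $\theta + (b-\theta)/3$ (the extreme leftmost real part permitted by the $\delta$-disk) and vertical extent at most $2\delta \le 2$, producing $|S(z)| \le \frac{3}{b-\theta}$ times another single-log factor. Multiplying gives a term of shape $\frac{n}{(b-\theta)^3}$ times a squared log-factor, into which the lower-order main-term piece $E(z)$ is absorbed by the elementary inequality $b - \theta < 1$ together with the size of the log-factor.

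The portions of $H_k, \overline{H_k}$ with $\Re s \in (1,2]$ require only the absolutely convergent Dirichlet series bound
$$\left|\frac{\zeta'}{\zeta}(z)\right| \le -\frac{\zeta'}{\zeta}(\Re z) = \sum_{g\in\GG}\frac{\Lambda(g)}{|g|^{\Re z}},$$
which is uniformly $O(1)$ away from the pole at $s=1$. The thin strip $\Re z \in (1,1+\eta)$ near the pole is treated either by \eqref{zlogprime-tsmall} (whose pole contribution $(z-1)^{-1}$ is absorbed into the sum bound above) or by the obvious unilateral version of Lemma \ref{l:borcar}.

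The principal obstacle is numerical bookkeeping rather than conceptual difficulty: one must carefully sum and multiply the explicit constants coming out of Lemmas \ref{l:borcar}, \ref{l:path-translates}, and \ref{l:zerosinrange}, and then absorb lower-order contributions using the elementary bounds $b-\theta < 1$, $1-\theta < 1$, and $\log(|t|+B+5) > \log 5$, so as to obtain precisely the coefficients $10, 60, 42, 140$ that appear squared in \eqref{linezest-c}.
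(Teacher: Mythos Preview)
The paper does not contain a proof of this lemma; it is quoted from the companion papers \cite{Rev-MP} and \cite{Rev-D} (see the opening paragraph of Section~\ref{sec:basics}: ``for their proofs see \cite{Rev-MP} and \cite{Rev-D}''). Your outline is the standard argument and matches what is carried out in those references: apply the Borel--Carath\'eodory type Lemma~\ref{l:borcar} at $z=s+i\alpha$, bound each near-zero term $1/(z-\rho)$ from below using the separation $d(t)$ from Lemma~\ref{l:path-translates} (noting that $|z-\rho|=|s-(\rho-i\alpha)|\ge d$ by the symmetry of $\A$), and count the zeros in $S(z)$ via Lemma~\ref{l:zerosinrange}.

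One small point deserves tightening. Your treatment of the portion $\Re s\in(1,2]$ on $H_k$ is slightly loose: for $\Re z>1$ with $|\Im z|$ large, neither \eqref{zlogprime} nor \eqref{zlogprime-tsmall} applies as stated (both require $\Re z\le 1$), and the crude Dirichlet-series bound $-\zeta'/\zeta(\sigma)$ blows up as $\sigma\to 1^{+}$. The clean fix is to observe that the Borel--Carath\'eodory argument behind Lemma~\ref{l:borcar} works verbatim for $\Re z$ slightly beyond $1$ (there are no zeros to the right of $1$, so the sum over $S$ is empty and only the explicit remainder survives); alternatively, apply \eqref{zlogprime} at the neighboring point with real part exactly $1$ and use that on $H_k$ the pole at $1$ is at distance $\ge d$ from $z$ by the last clause of Lemma~\ref{l:path-translates}. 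With this adjustment your argument is complete, and the remaining work is indeed just numerical bookkeeping to arrive at the displayed constants.
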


\subsection{A Riemann-von Mangoldt type formula of prime distribution
with zeroes of the Beurling $\zeta$}\label{sec:sumrho}

We denote the set of $\zeta$-zeroes, lying to the right of $\Gamma$, by $\Z(\Gamma)$, and denote $\Z(\Gamma,T)$ the set of those zeroes $\rho=\beta+i\gamma\in \Z(\Gamma)$ which satisfy $|\gamma|\leq T$. The next statement is Theorem 5.1 from \cite{Rev-MP}.

\begin{proposition}[Riemann--von Mangoldt formula]\label{l:vonMangoldt}
Let $\theta<b<1$ and $\Gamma=\Gamma_b^{\{0\}}$ be the curve defined in Lemma \ref{l:path-translates} for the one-element set $\A:=\{0\}$ with $t_k$ denoting the corresponding set of abscissae in the construction.
Then for any $k=1,2,\ldots$ (and whence $t_k\ge 4$) we have
$$
\psi(x)=x - \sum_{\rho \in \Z(\Gamma,t_k)}
\frac{x^{\rho}}{\rho} + O\left( \frac{1-\theta}{(b-\theta)^{3}}
\left(A+\kappa+\log \frac{x+t_k}{b-\theta}\right)^3 \left( \frac{x}{t_k} + x^b\right) \right).
$$
\end{proposition}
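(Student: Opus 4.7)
The plan is to establish the formula by the classical contour-shift argument from the natural-number case, adapted to the Beurling setting via the specific contour $\Gamma_b^{\{0\}}$ and the bounds of Section \ref{sec:basics}. First I would start from the truncated Perron identity
\begin{equation*}
\psi(x) = \frac{1}{2\pi i} \int_{c-it_k}^{c+it_k} \left(-\frac{\zeta'}{\zeta}(s)\right) \frac{x^s}{s}\, ds + E_P,
\end{equation*}
with $c := 1 + 1/\log x$, using that in $\Re s > 1$ the series \eqref{zetalogder} converges absolutely. The Perron truncation error $E_P$ is handled by the standard splitting into terms with $|g|$ far from $x$ and terms near $x$: the former contributes $\ll x(\log x)^2/t_k$ since $\sum_g \Lambda_\GG(g)/|g|^c \ll \log x$, while the latter is dominated by the Chebyshev estimate $\psi(x) \ll x$, itself a consequence of Axiom A. This is absorbed by the stated error.

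Next I would close the contour to the left along $\Gamma = \Gamma_b^{\{0\}}$ from Lemma \ref{l:path-translates}, connecting $\Gamma$ to the vertical line $\Re s = c$ by the horizontal segments $H_k = [(b+\theta)/2 + it_k,\; c+it_k]$ and $\overline{H_k}$. Since $\Gamma$ stays in $\{\sigma \ge (b+\theta)/2 > 0\}$, the pole at $s=0$ of $x^s/s$ lies outside the contour; inside, the integrand $-(\zeta'/\zeta)(s)\cdot x^s/s$ has a simple pole at $s=1$ of residue $x$ (from the simple pole $\zeta(s)\sim \kappa/(s-1)$, giving $-\zeta'/\zeta(s)\sim 1/(s-1)$) and simple poles at each zero $\rho \in \Z(\Gamma, t_k)$ of residue $-x^\rho/\rho$, counted with multiplicity. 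The residue theorem then yields
\begin{equation*}
\psi(x) = x - \sum_{\rho \in \Z(\Gamma, t_k)} \frac{x^\rho}{\rho} + \frac{1}{2\pi i}\left(\int_\Gamma + \int_{H_k} + \int_{\overline{H_k}}\right) \left(-\frac{\zeta'}{\zeta}(s)\right) \frac{x^s}{s}\, ds + E_P.
\end{equation*}

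The technical heart is bounding the three remaining line integrals. On $H_k$ one has $|x^s/s| \le x^\sigma/t_k$, and Lemma \ref{l:zzpongamma-c} (applied with $\A=\{0\}$, $n=1$, $B=0$) bounds $|\zeta'/\zeta|$ by $\frac{1-\theta}{(b-\theta)^3}$ times a square of logarithms in $t_k$, $A+\kappa$ and $1/(b-\theta)$. Integrating $\sigma$ from $(b+\theta)/2$ to $c$ produces a geometric factor $\ll x/\log x$, and after dividing by $t_k$ yields the $x/t_k$ part of the bound with one extra $\log$ factor (from the length of integration) on top of the Lemma \ref{l:zzpongamma-c} square. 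On $\Gamma$ itself, $|x^s/s| \le x^b/|s|$; summing the $O(1)$-length segment contributions between consecutive ordinates $t_j$ introduces an extra factor of $\log t_k$ from $\int_0^{t_k} dt/(1+|t|)$, which combines with Lemma \ref{l:zzpongamma-c} to again produce the cubic log, delivering the $x^b$ part of the stated error.

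The main obstacle will be the careful bookkeeping to collapse all the accumulated logarithmic quantities into the single clean form $(A+\kappa+\log\frac{x+t_k}{b-\theta})^3$: one power comes from the contour integration (the $\log x$ geometric factor on $H_k$, and the $\log t_k$ length factor on $\Gamma$) and the remaining square comes from Lemma \ref{l:zzpongamma-c}. A secondary subtlety is the choice of truncation height: truncating at an arbitrary $T$ could place $H_k$ arbitrarily close to a zeta-zero, where $\zeta'/\zeta$ blows up. Lemma \ref{l:path-translates} is used precisely to provide a discrete sequence of admissible heights $t_k$ that separate all zeros from $H_k$ by a controlled margin, making the bound of Lemma \ref{l:zzpongamma-c} uniformly applicable; this is the Beurling-specific ingredient that replaces the automatic gaps available in the classical case.
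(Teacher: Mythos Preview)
The paper does not actually prove Proposition~\ref{l:vonMangoldt}: it is quoted from \cite{Rev-MP} (see the paragraph preceding Section~\ref{sec:zeros}, ``for their proofs see \cite{Rev-MP} and \cite{Rev-D}''), so there is no in-paper proof to compare against. Your outline is the standard contour-shift argument and is in all likelihood the same route as in \cite{Rev-MP}: truncated Perron, deform to $\Gamma_b^{\{0\}}$, collect residues, and bound the horizontal and vertical integrals via Lemma~\ref{l:zzpongamma-c}, with Lemma~\ref{l:path-translates} supplying the admissible heights $t_k$.

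One small correction: you invoke ``the Chebyshev estimate $\psi(x)\ll x$, itself a consequence of Axiom~A'' for the near-$x$ terms in the Perron error. The paper itself only uses the cruder bound $\psi(x)\le (A+\kappa)x\log x$ (see \eqref{eq:Psirough}), and in the Beurling setting the genuine Chebyshev bound, while true under Axiom~A, is a nontrivial separate result rather than an immediate consequence. For the Perron truncation you do not need it anyway: the near-$x$ contribution is controlled by $\psi(x+x/t_k)-\psi(x-x/t_k)$, and the rough $x\log x$ bound (or directly Axiom~A for short intervals of $\N$) already places this comfortably inside the stated cubic-log error. This is a minor wording issue, not a gap in the argument.
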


\subsection{A density theorem for $\ze$-zeros close to the $1$-line}
\label{sec:density}

In \cite{Rev-D} we used two additional assumptions to prove a density theorem on the zeroes of the Beurling zeta function. One is that the norm would actually map to the natural integers.
\begin{definition}[Condition B]\label{condB} We say that
\emph{Condition B} is satisfied, if $|\cdot|:\G\to\NN$, that is,
the norm $|g|$ of any element $g\in\G$ is a natural number.
\end{definition}

As is natural, we will write $\nu\in|\G|$ if there exists $g\in\G$
with $|g|=\nu$. Under Condition B we can introduce the arithmetical
function $G(\nu):=\sum_{g\in\G,~|g|=\nu} 1$, which is then an
arithmetical function on $\NN$. The next condition is a kind of
"average Ramanujan condition" for the Beurling zeta function.

\begin{definition}[Condition G]\label{condG} We say that
\emph{Condition G} is satisfied, if with a certain $p>1$ we have
for the function
\begin{equation}\label{Fpdef}
F_p(X):=\frac1X\sum_{g\in\G ; |g|\leq X} G(|g|)^p = \frac1X
\sum_{\nu \in|\G| ; \nu \leq X} G(\nu)^{1+p}=\frac1X \int_1^X
G^{p}(x)d\N(x)
\end{equation}
the property that
\begin{equation}\label{Gpcondi}
\log F_p(X) = o(\log X) \qquad (X\to \infty),
\end{equation}
that is, for any fixed $\varepsilon >0$
$F_p(X)=O(X^{\varepsilon})$.
\end{definition}

Note that in case $\log G(\nu) = o(\log \nu)$, i.e. when for all
$\ve>0$ we have $G(\nu)=O(\nu^{\ve})$, then Condition G is
automatically satisfied. Even this stronger $O(\nu^{\ve})$ order
estimate is proved for many important cases, see e.g. 2.4. Theorem
and 2.5. Corollary of \cite{Knopf}.

There are many natural examples of the above condition. For a discussion see the original book of Knopfmacher or Section 3.1 of \cite{Rev-D}. The main result of \cite{Rev-D} was the following Carlson-type density theorem, proved following the methods in \cite{Pintz9} and \cite{PintzNewDens}.
\begin{theorem}\label{th:density} Assume that $\G$ satisfies besides
Axiom A also Conditions B and G, too. Then for any $\varepsilon>0$
there exists a constant $C=C(\varepsilon,\G)$ such that for all
$\alpha>(1+\theta)/2$ we have
\begin{equation}\label{density}
N(\alpha,T)\leq C T^{\frac{6-2\theta}{1-\theta}(1-\alpha)+\ve}.
\end{equation}
\end{theorem}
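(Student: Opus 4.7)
The plan is to adapt the classical Carlson density theorem argument, in the form developed by Pintz \cite{Pintz9, PintzNewDens}, to the Beurling setting. The central device is a Halász-style zero-detector. Introduce a Möbius mollifier
\[
M_Y(s):=\sum_{|g|\le Y}\frac{\mu_\GG(g)}{|g|^s},
\]
so that $\zeta(s)M_Y(s)=1-D_\infty(s)$ where $D_\infty(s)=\sum_{|g|>Y}c_g/|g|^s$, the $c_g$ being the truncated Dirichlet-convolution coefficients of $1$ against $\mu_\GG$. At any zero $\rho$ in the rectangle $\{\Re s\ge\alpha,\,|\Im s|\le T\}$ one has $\zeta(\rho)M_Y(\rho)=0$, hence $D_\infty(\rho)=1$. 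Cutting $D_\infty$ off at a height $X$ via Mellin inversion -- the remainder integral being controlled by the Axiom A pointwise growth of $\zeta$ -- converts the zero count $N(\alpha,T)$ into a large-values estimate for the finite polynomial
\[
D(s):=\sum_{Y<|g|\le X}\frac{c_g}{|g|^s}
\]
evaluated on a set of well-separated ordinates. The separation is supplied cheaply by Lemmas \ref{l:Littlewood} and \ref{l:zerosinrange}, so that one may reduce to a $1$-spaced subset of zeros of cardinality comparable to $N(\alpha,T)/\log T$.

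For a dyadic block $D_N(s)=\sum_{N<|g|\le 2N}c_g/|g|^s$ a Montgomery-type generalized mean-value inequality gives
\[
\int_{-T}^{T}|D_N(\alpha+it)|^{2}\,dt \ll (T+N)\sum_{N<|g|\le 2N}\frac{|c_g|^2}{|g|^{2\alpha}}.
\]
Condition B lets one collect Beurling integers with common integer norm, and Condition G then yields $\sum_{\nu\le 2N}G(\nu)^2=O(N^{1+\ve})$; combined with a routine bound for the Möbius convolution, this gives $\sum_{N<|g|\le 2N}|c_g|^{2}=O(N^{1+\ve})$. A Halász large-values inequality applied to the $1$-separated ordinates then bounds the number of points where $|D_N(\alpha+it)|$ is large in terms of $T$, $N$, and the chosen threshold.

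These dyadic bounds are summed over $Y<N\le X$ and balanced against the pointwise estimate on the abscissa $\sigma=\theta+\ve$. On that vertical line only the one-sided bound $\zeta(\theta+\ve+it)=O_\ve(|t|)$ is available -- derived by partial integration in the Mellin representation from $|\R(x)|\le Ax^\theta$ -- and this is what one uses to push the contour of the Mellin cut-off integral from near $\Re s=\alpha$ down to $\Re s=\theta+\ve$. The resulting three-way optimization of $X$, $Y$, and the Halász threshold $V$ is precisely what produces the announced exponent $\frac{6-2\theta}{1-\theta}(1-\alpha)+\ve$.

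The main obstacle is the absence of a functional equation for $\zeta_\GG$: in the critical strip the only bound in play is the one-sided one flowing from Axiom A, strictly weaker than the symmetric Phragmén--Lindelöf interpolation available in the Riemann case. This is precisely the reason why the Beurling coefficient $\frac{6-2\theta}{1-\theta}$ exceeds Carlson's classical $4$. A closely related difficulty is the potentially large multiplicity $G(\nu)$ of Beurling integers of common norm, which would swamp the $L^2$ mass of the mollified polynomial; Condition G is the sole device keeping $\sum|c_g|^2$ of polynomial size $N^{1+\ve}$, and without it the Halász step would be useless. Finally, the optimized estimate degenerates as $\alpha\downarrow(1+\theta)/2$ and must at that endpoint be replaced by the crude Littlewood-type bound of Lemma \ref{l:Littlewood}; the assumption $\alpha>(1+\theta)/2$ is intrinsic to the method.
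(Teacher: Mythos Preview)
The present paper does not actually contain a proof of Theorem~\ref{th:density}; it is quoted from the companion article \cite{Rev-D}, with the remark that it was ``proved following the methods in \cite{Pintz9} and \cite{PintzNewDens}.'' Your outline is therefore not being compared against a proof in this paper, but against the methodology signalled there.

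That said, your sketch does follow the Pintz--Hal\'asz scheme indicated: mollify $\zeta$ by a truncated M\"obius sum, detect zeros through the resulting Dirichlet polynomial, separate ordinates via the coarse zero-counting lemmas, and feed a dyadic decomposition into a Hal\'asz-type large-values inequality, with Conditions~B and~G supplying the $L^2$ coefficient bound. You also correctly identify the structural point that, absent a functional equation, only the one-sided growth estimate $\zeta(\theta+\ve+it)=O_\ve(|t|)$ is available on the left edge of the strip, and that this weaker convexity input is what inflates the exponent beyond the classical Carlson value.

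What you have written, however, is an outline rather than a proof. The substantive step---the three-parameter optimization in $X$, $Y$, and the Hal\'asz threshold that produces the precise exponent $\tfrac{6-2\theta}{1-\theta}(1-\alpha)$---is asserted but not carried out, and it is exactly here that the Beurling-specific bookkeeping (the $O(|t|)$ bound on $\Re s=\theta+\ve$, the $N^{1+\ve}$ coefficient mass from Condition~G, and the contribution of the Mellin cut-off error) must be balanced explicitly. In \cite{Rev-D} this calculation occupies the bulk of the argument. If your aim is a self-contained proof, that optimization needs to be written out; as it stands, the proposal is a faithful roadmap of the method of \cite{Rev-D} but not yet a proof.
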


Note that according to Lemma \ref{l:Littlewood} the above theorem gives a nontrivial--i.e., better than $O(T^{1+\ve})$--result only for $\alpha>\frac{5-\theta}{6-2\theta}$.

\section{Oscillation of $\Delta(x)$ "caused by a given zero" of
$\zeta$, Part I \\ The weighted average and its conditional upper estimate}\label{sec:upperesti}

Theorem \ref{th:Bonezeroosci} consists of two parts, but the part with $\gamma_0=0$ is easier and its proof can be easily
derived by adapting (and simplifying, where appropriate) the proof for the second, slightly more involved statement with $\gamma_0>0$. Therefore, we will present in detail only the proof of this more intriguing  part.

To obtain the $\gamma_0>0$ part of the assertion of Theorem \ref{th:Bonezeroosci}, actually we will
prove the slightly more precise statement below. Here and everywhere in the discussion $A'$, $A_1, A_2,\ldots$ stand for explicit constants depending only on the parameters $A,\kappa,\theta$ from Axiom A above.

\begin{theorem}\label{th:Bonezeroosci-plus} Let $\ze(\rho_0)=0$ with
$\rho_0=\beta_0+i\gamma_0$ and $\beta_0>\theta, ~ \gamma_0>0$.
Then for arbitrary $0<\ve<0.1$ and
\begin{equation}\label{eq:Ylimits}
\log Y > \max \left\{ \frac{5\log\frac{1}{\beta_0-\theta}}{\beta_0-\theta}, \frac{\log(8/\ve)}{\beta_0-\theta}, \frac{40}{\ve^2 \gamma_0^4}, \log|\rho_0|, A_9 \right\},
\end{equation}
there exists an $x$ in the interval
\begin{equation}\label{eq:xinterval}
I:=\left[Y,Y^{A_{10}\frac{\log(\gamma_0+5)}{(\beta_0-\theta)^2}}\right],
\end{equation}
such that
\begin{equation}\label{eq:oscillationpihalf}
\left| \D (x) \right| > \left(\frac{\pi}{2}-\varepsilon\right)
\frac{x^{\beta_0}}{|\rho_0|}.
\end{equation}
\end{theorem}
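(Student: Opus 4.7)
The plan is a contradiction argument. Assume $|\D(x)| < (\pi/2-\ve)x^{\beta_0}/|\rho_0|$ holds throughout the interval $I$ of \eqref{eq:xinterval}. I will exhibit an $x \in I$ at which the Riemann--von Mangoldt expansion forces $|\D(x)|$ strictly larger, contradicting the assumption. The instrument is a weighted Gaussian average
\[
J(x) := \int_{-\infty}^{\infty}\D(xe^u)\cos(\gamma_0 u)\,e^{-au^2}\,du,
\]
for a small parameter $a = a(\ve,\gamma_0,\beta_0)>0$. The modulator $\cos(\gamma_0 u)$ concentrates the spectral weight of the Gaussian mollifier near $\pm i\gamma_0$. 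Inserting the truncated expansion of Proposition \ref{l:vonMangoldt} and exchanging summation with integration, each $\zeta$-zero $\rho$ contributes $-\frac{x^\rho}{\rho}\widehat{K}(\rho)$, where $\widehat{K}(\rho) = \tfrac12\bigl(\widehat{G}(\rho-i\gamma_0)+\widehat{G}(\rho+i\gamma_0)\bigr)$ and $\widehat{G}(s) = \sqrt{\pi/a}\,e^{s^2/(4a)}$ by a direct Gaussian completion of the square (of which Lemma \ref{l::integralformula} is the Mellin-inverse). For $\rho = \rho_0$ the first summand evaluates to the real positive $\tfrac12\sqrt{\pi/a}\,e^{\beta_0^2/(4a)}$, while every other zero carries the Gaussian damping $|\widehat{G}(\rho\pm i\gamma_0)| = \sqrt{\pi/a}\,e^{((\Re\rho)^2-(\Im\rho\mp\gamma_0)^2)/(4a)}$.

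For the lower bound on $|J(x)|$, I would isolate the pair $\rho_0,\overline{\rho_0}$, whose joint contribution is $-\sqrt{\pi/a}\,e^{\beta_0^2/(4a)}(x^{\beta_0}/|\rho_0|)\cos(\gamma_0\log x + \varphi)$ for a suitable phase $\varphi$. The off-diagonal sum over all remaining zeros is controlled by combining the Gaussian damping above with the zero-counting estimates Lemmas \ref{l:Littlewood}, \ref{l:zerosinrange} (augmented, in the range $\beta_0>(1+\theta)/2$, by the Carlson-type Theorem \ref{th:density}), while the truncation remainder from \eqref{eq:RvM} is handled on the broken-line contour of Lemma \ref{l:path-translates} via Lemma \ref{l:zzpongamma-c}. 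Tuning $a$, the truncation height $t_k$, and the endpoint $Y$ according to \eqref{eq:Ylimits} makes both auxiliary quantities at most $\eta$ times the main amplitude, for any prescribed $\eta>0$. The Modified Cassels power-sum theorem (Lemma \ref{l:Cassels}) applied to the two unimodular exponentials $e^{\pm i\gamma_0\log x}$ within the logarithmic window corresponding to $I$ then yields an $x \in I$ with $\cos(\gamma_0\log x+\varphi)\ge 1-\eta$, so that $|J(x)|\ge (1-3\eta)\sqrt{\pi/a}\,e^{\beta_0^2/(4a)}\,x^{\beta_0}/|\rho_0|$.

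Against this, the contradiction hypothesis gives the pointwise bound
\[
|J(x)| \le \frac{\pi/2-\ve}{|\rho_0|}\,x^{\beta_0}\int_{-\infty}^{\infty}e^{\beta_0 u - au^2}|\cos(\gamma_0 u)|\,du.
\]
Completing the square via $u=v+\beta_0/(2a)$ and rescaling $w=v\sqrt{a}$ transforms the integral into $e^{\beta_0^2/(4a)}\,a^{-1/2}\int e^{-w^2}|\cos(Pw+R)|\,dw$ with $P=\gamma_0/\sqrt a$, bounded by Lemma \ref{l:estimates}(iii) by $\tfrac{2}{\sqrt{\pi a}}\,e^{\beta_0^2/(4a)}\bigl(1+O(\sqrt a/\gamma_0)\bigr)$. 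Dividing the lower and upper bounds by their common factor $x^{\beta_0}e^{\beta_0^2/(4a)}/|\rho_0|$ and rearranging yields $(1-3\eta)(\pi/2) \le (\pi/2-\ve)(1+O(\sqrt a/\gamma_0))$. Choosing $\eta<\ve/(3\pi)$ and $a$ sufficiently small in terms of $\ve,\gamma_0$ renders this a strict contradiction, completing the proof. The ratio $\sqrt{\pi/a}\cdot\sqrt{\pi a}/2 = \pi/2$ of the oscillating Gaussian integral to the $L^1$-norm of the modulated Gaussian kernel is exactly the Gibbs-type constant embodied in the theorem.

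The principal obstacle will be controlling the off-diagonal zero tail in the regime $\theta<\beta_0\le(1+\theta)/2$, where Theorem \ref{th:density} is trivial: there one must rely entirely on the Gaussian damping $e^{-(\Im\rho\mp\gamma_0)^2/(4a)}$ to defeat the $O(T\log T)$ density of zeros per unit height given by Lemma \ref{l:zerosinrange}. The competition between $a$ being small enough for the main amplitude to overwhelm the off-diagonal error, and $a$ being large enough that the Gaussian sum over distant zeros converges rapidly and the Cassels window fits inside $I$, is precisely what dictates the constraints \eqref{eq:Ylimits} on $Y$ and the logarithmic exponent $A_{10}\log(\gamma_0+5)/(\beta_0-\theta)^2$ defining the length of the oscillation interval.
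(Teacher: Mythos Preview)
Your weighted average $J(x)$ and the use of Lemma~\ref{l:estimates}(iii) to extract the constant $\pi/2$ are exactly the right instruments, and match the paper's upper-estimate side closely. However, the lower-estimate half of your argument has a genuine gap.

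The Gaussian weight $\widehat{G}(\rho-i\gamma_0)$ has modulus $\sqrt{\pi/a}\exp\bigl(((\Re\rho)^2-(\Im\rho-\gamma_0)^2)/(4a)\bigr)$, so it damps zeros only according to their \emph{vertical} distance from $\gamma_0$. Any zero $\rho=\beta+i\gamma$ with $|\gamma-\gamma_0|$ small (say $\le 1$) and $\beta$ anywhere in $(\theta,1)$ contributes a term of magnitude comparable to the main $\rho_0$ term, with an uncontrolled phase $e^{i\gamma\log x}$. Lemma~\ref{l:zerosinrange} permits $O\bigl(\log(\gamma_0+5)/(\beta_0-\theta)\bigr)$ such zeros in a unit-height strip, and no choice of $a$ shrinks their total contribution below $\eta$ times the main amplitude. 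Neither the Carlson-type Theorem~\ref{th:density} nor plain zero-counting helps here: even a \emph{single} extra zero at height $\approx\gamma_0$ could cancel the $\rho_0$ contribution at your chosen $x$. Your invocation of Lemma~\ref{l:Cassels} for just the pair $e^{\pm i\gamma_0\log x}$ is both unnecessary (one can solve $\gamma_0\log x+\varphi\equiv 0\pmod{2\pi}$ directly) and insufficient (it ignores these interfering zeros).

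The paper resolves this by reversing the roles of the two parameters: $x$ is encoded in a fixed choice $M=16m$, while the Gaussian width $m$ itself becomes the running variable in a Tur\'an-type power sum. After moving the contour, each nearby zero $\rho$ contributes $e^{m\lambda(\rho)}$ with $\lambda(\rho)=(\rho-\rho_0)^2+16(\rho-\rho_0)$, so that $\rho_0$ gives \emph{exactly} the constant term~$1$ (and its conjugate partner another~$1$). Now Lemma~\ref{l:Cassels} is applied to the \emph{full} collection of nearby zeros, with $k=2$ unit terms, guaranteeing some $m$ in an interval of multiplicative length $2n+1$ (where $n$ counts the interfering zeros) for which the whole power sum has real part $\ge 2$. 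It is precisely this zero-count $n\ll\log(\gamma_0+5)/(\beta_0-\theta)$ that produces the exponent in \eqref{eq:xinterval}, not any tension in the size of $a$.
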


{\bf A preview of the proof.} We follow the proof of Theorem 1 in \cite{RevAA},
adapted to our setup and making use the analysis of the Beurling zeta function
worked out in the first and second parts of the series \cite{Rev-MP}, \cite{Rev-D}.

The main idea - borrowed from \cite{Pintz1} - is the use of a certain
weighted integral, which is evaluated in two ways. In one, we
estimate the total value by assuming an upper bound of $\D(x)$ in
$I$. We will also compute that a subinterval $[q,Q]$ supports almost the whole weight.
In the other evaluation we use contour integration and apply
Tur\'an's power sum theory for the evaluation -- i.e. for the
occasional lower estimation -- of the sum of residues. In this, the above
Modified Cassel's Power Sum Theorem (Lemma \ref{l:Cassels}) will be of importance.

Comparing the two estimates of the above mentioned weighted integral mean will finally provide the lower estimation of $\Delta(x)$.

\begin{proof} First we fix a few parameters as follows.
\begin{align}\label{eq:parameters}
m\geq \log Y, & \qquad M := 16 m, ~~\mu:=12 m,
\qquad q:=e^{M-\mu}=e^{4m}, \qquad Q:=e^{M+\mu}=e^{28m},  \notag
\end{align}
where $m$ is a continuous variable left to be chosen: we will do it so that
\begin{equation}
\label{eq:bBI} [q,Q] \subset I.
\end{equation}
We denote
\begin{equation}\label{eq:Kdef}
K:=\sup_{x\in I} \frac{|\D(x)|}{x^{\beta_0}}.
\end{equation}
We also introduce, as the Dirichlet-Mellin transform of $\D(x)$,
the in $\Re s>\theta$ meromorphic function
\begin{equation}\label{eq:Rdef}
D(s):=-\frac{\zeta'}{\zeta}(s) - \frac{s}{s-1} = \int_1^\infty x^{-s} d\Delta(x)=-
\int_{1}^{\infty} \D(x)dx^{-s} = s \int_{1}^{\infty} \D(x)x^{-s-1}
dx.
\end{equation}
Finally, for any complex parameter $w:=u+iv$ with $\beta_0 \leq u \leq
1$, $v>0$ we write
\begin{align}\label{eq:U}
U:=U(w)& := \frac{1}{2\pi i} \int_{(2)} D(s+w) e^{ms^2+Ms}ds
 \\ &= \frac{1}{2\pi i} \int_{(2)} \left( -\int_1^{\infty}
\D(x) \frac{d}{dx} (x^{-s-w}) dx \right) e^{ms^2+Ms} ds \notag
\\&= - \int_1^{\infty} \D(x) \frac{d}{dx} \left\{ x^{-w}
\frac{1}{2\pi i} \int_{(2)} e^{ms^2+(M-\log x)s}ds\right\}dx
\notag \\ &= - \int_1^{\infty} \D(x) \frac{d}{dx} \left\{ x^{-w}
\frac{1}{2\sqrt{\pi m}} \exp \left( -\frac{(\log x -M)^2}{4m}
\right)\right\} dx \notag \\ &= \frac{1}{2\sqrt{\pi m}}
\int_1^{\infty} \frac{\D(x) }{x}  x^{-w} \left\{\frac{\log x
-M}{2m} +w\right\} \exp \left( -\frac{(\log x -M)^2}{4m} \right)
dx, \notag
\end{align}
where the order of the integrations and the derivation were
changed and the integral formula of Lemma \ref{l::integralformula}
was applied.

Now let us split the integral for $U$ to three parts as
\begin{equation}\label{eq:Usplit}
U_1:=\int_1^q,~U_2:=\int_q^Q,~U_3:=\int_Q^{\infty}.
\end{equation}
For the general estimation of $\psi (x)$ and $\Delta(x)$, we may settle with the obvious estimates
\begin{align}\label{eq:Psirough}
(0\le ) ~\psi(x) &\le \sum_{g\in \G, |g|\le x} \log x = \N(x) \log x \le (A+\kappa) x \log x \qquad &(x\ge 1), \notag
\\ |\Delta(x)|&\le A' x \log x +1 \qquad \textrm{where} \quad A':=\max(1,A+\kappa) \qquad &(x\ge 1).
\end{align}

Suppose that
\begin{equation}\label{eq:wm}
|w|\leq \exp(4m).
\end{equation}
Assuming also $m\ge 4$ and $m \ge A':=\max(1,A+\kappa)$ we infer the estimate
\begin{align}\label{eq:U1}
|U_1|&\leq \frac{1}{2\sqrt{\pi m}} \int_1^{q} (A'\log x +
1) x^{-u} \left\{\frac{M-\log x}{2m} +|w|\right\} \exp \left(
-\frac{(\log x -M)^2}{4m} \right) dx \notag \\ & \leq
\frac{(A'4m+1)(\exp(4m)+8)}{2\sqrt{\pi m}} \int_1^{q} \exp
\left( -\left(\frac{(\log x -M)}{2\sqrt{m}}\right)^2-u \log x
\right) dx \notag \\ & < \frac{(5A'm)2\exp(4m)}{\sqrt{\pi}}  \int_{-8\sqrt{m}}^{-6\sqrt{m}}
e^{-y^2 -u(2\sqrt{m} y +M)} e^{2\sqrt{m} y +M} dy
\notag \\& < 9m^{2} e^{4m+M(1-u)+m(1-u)^2} \int_{6\sqrt{m}+(1-u)\sqrt{m}}^{\infty}
e^{-t^2} dt \notag \\ & < \exp\left(6m+16m(1 -u)-12m(1-u)-36m\right)
\leq e^{-26m}.
\end{align}
Here we have substituted $y:=(\log x-M)/(2\sqrt{m})$ and $t:=(1-u)\sqrt{m}-y$ and used Lemma \ref{l:estimates} (i),
$A'\le m$ and $3m<e^{m}$, valid for all $m\ge 4$.

Using that for $x>Q=e^{M+\mu}$ by \eqref{eq:wm} we have $\frac{\log x -M}{2m}+|w|<\log x+e^{4m}$ and taking into account Lemma \ref{l:estimates} (ii) we can estimate similarly the part $U_3$ as follows.
\begin{align}\label{eq:U3a}
|U_3|& \leq \frac{1}{2\sqrt{\pi m}} \int_Q^{\infty} (A'\log x +
1) x^{-u} \left\{\frac{\log x -M}{2m} +|w|\right\} \exp \left(
-\frac{(\log x -M)^2}{4m} \right) dx \notag \\
& \leq \frac{1}{2\sqrt{\pi m}} \int_Q^{\infty}\frac{2A'\log x (\log x+ e^{4m})}{x^u} \exp
\left( -\left(\frac{(\log x -M)}{2\sqrt{m}}\right)^2 \right) dx \notag
\\ & \leq \frac{A'}{\sqrt{\pi m}} \int_Q^{\infty} \left(e^{2/u+4}+e^{4m+1/u+1} \right)\exp
\left( -\left(\frac{(\log x -M)}{2\sqrt{m}}\right)^2 \right) dx.
\end{align}
Recall that $1/u\le 1/(\beta_0-\theta) \le \frac{\log Y}{\log(8/\ve)} \le \frac{\log Y}{\log 80} \le \frac14 \log Y \le \frac14 \log q = m$ by condition, hence $1/u\le m$ and $2/u\le 2m$, so that $e^{2/u+4}+e^{4m+1/u+1}\le e^{2m+4}+e^{5m+1} \le e^{3m}+e\cdot e^{5m} \le 3e^{5m}$, say. Therefore, after a change of variables an application of Lemma \ref{l:estimates} (iii) furnishes \begin{align}\label{eq:U3b}
|U_3| & < \frac{3 A'}{\sqrt{\pi m}} e^{5m} \int_{Q}^{\infty} \exp
\left( -\left(\frac{(\log x -M)}{2\sqrt{m}}\right)^2 \right) dx
\\ &=\frac{6A'}{\sqrt{\pi}} e^{5m} \int_{6\sqrt{m}}^{\infty}  \exp(-y^2) ~e^{2\sqrt{m}y+M}~dy \notag
\\ &= \frac{6A'}{\sqrt{\pi}} e^{5m+M+m} \int_{6\sqrt{m}}^\infty e^{-(y-\sqrt{m})^2} dy < 4 m e^{22m} e^{-25m} < e^{-2m}.\notag
\end{align}
Here in the last line we took into account that $\frac{6}{\sqrt{\pi}} A'\le 6m \le e^{m}$ for $m \ge 4$.

Next we define, combining the respective terms with $w=\rho_0$ and $w=\overline{\rho_0}$
\begin{equation}\label{Sdef}
S:=S(\rho_0):=U(\rho_0)+U(\overline{\rho_0}),
\end{equation}
and split it up the same way as we did for $U$ in \eqref{eq:Usplit}.
According to the above we then have
\begin{equation}\label{S13esti}
|S_1|+|S_3| \le 4e^{-2m}  \qquad \text{whenever} \quad m = \frac14 \log q \ge \max\left(\frac14 \log|\rho_0|, (A+\kappa), 4\right).
\end{equation}
Now let us consider the main part
$$
S_2=\frac{1}{2\sqrt{\pi m}} \int_q^Q \frac{\Delta(x)}{x} \left\{x^{-{\rho_0}}\rho_0 + x^{-{\overline{\rho_0}}}\overline{\rho_0} +  \left(x^{-{\rho_0}}+ x^{-{\overline{\rho_0}}}\right) \frac{M-\log x}{2m}\right\} \exp \left(
-\frac{(\log x -M)^2}{4m} \right) dx.
$$
Put $\alpha_0:=\arg \rho_0$ and $\alpha_1:=M\gamma_0-\alpha_0$. We can estimate $S_2$ using \eqref{eq:Kdef}, $[q,Q]\subset I$, the same as above substitution $y:=\frac{\log x - M}{2\sqrt{m}}$ and Lemma \ref{l:estimates} (iii) to get
\begin{align}\label{S2esti}
|S_2| & \leq
\frac{1}{\sqrt{\pi m}} \int_q^Q \frac{K}{x} \left\{\left|\rho_0\cos(\gamma_0\log x-\alpha_0)\right| + \frac{|M-\log x|}{2m} \right\} \exp \left(
-\left(\frac{\log x -M}{2\sqrt{m}} \right)^2\right) dx \notag
\\ & \leq
\frac{K |\rho_0|}{\sqrt{\pi m}} \int_q^Q \frac{|\cos(\gamma_0\log x-\alpha_0)|}{x}  \exp \left(-\left(\frac{\log x -M}{2\sqrt{m}} \right)^2\right) dx  + \frac{2K}{\sqrt{\pi m}} \int_{-6\sqrt{m}}^{6\sqrt{m}} |y| e^{-y^2} dy
\notag \\ & \leq \frac{2K |\rho_0|}{\sqrt{\pi}} \int_{-\infty}^{\infty} |\cos(2\sqrt{m}\gamma_0 y+\alpha_1)| \exp \left(-y^2\right) dy  + \frac{2K}{\sqrt{\pi m}} \int_{-\infty}^{\infty} |y| e^{-y^2} dy \notag
\\ & \leq \frac{2K |\rho_0|}{\sqrt{\pi}} \left(\frac{2}{\sqrt{\pi}}+\frac{2\pi}{2\sqrt{m}\gamma_0} \right) + \frac{2K}{\sqrt{\pi m}}
= \frac{4K |\rho_0|}{\pi} \left(1+\frac{\pi \sqrt{\pi} + \sqrt{\pi}\gamma_0/|\rho_0|}{2\sqrt{m}\gamma_0} \right) \notag
\\& \le \frac{4K |\rho_0|}{\pi} \left(1+ \frac{4}{\sqrt{m}\gamma_0} \right),
\end{align}
on noting that $(\pi+1)\sqrt{\pi}=7.340781848...<8$.
Finally, let us combine this with the estimates for $S_1$ and $S_3$: in all we are led to
\begin{equation}\label{S-upperesti}
|S| \le \frac{4K |\rho_0|}{\pi} \left(1+\frac{4}{\sqrt{m}\gamma_0} \right)+4e^{-2m},
\end{equation}
whenever the conditions in \eqref{S13esti} for $m$ hold true. 

This will be compared to the lower estimation of the next section.

\section{Oscillation of $\Delta(x)$ "caused by a given zero" of
$\zeta$, Part II \\ Lower estimate by contour integration and power sum theory}\label{sec:loweresti}


In the second part we calculate $S$ by using the first form of $U$ in \eqref{eq:U}. We transfer the line of integration of $U(w)$ with $w=u+iv$ from $(\sigma=2)$ to the contour $\Gamma-u$, where $\Gamma:=\Gamma^{\A}_{b}$ is provided by Lemma \ref{l:path-translates}, with $\A:=\{-v,0,v\}$ and $b\in (\theta,u)$ a parameter to be chosen later, while $w=u+iv$ will be chosen $\rho_0=\beta_0+i\gamma_0$, as above. The transition of the contour of integration can be done easily due to the estimates of Lemma \ref{l:zzpongamma-c} and the uniform bound $|e^{ms^2+Ms}|=O_m(e^{-t^2})$ holding uniformly in the strip $-1\le \sigma \le 2$ and $s=\sigma+it$. By an application of the Residue Theorem we thus find after the change of the integration path
\begin{align}\label{Uotherway}
U(w) & = \frac{1}{2\pi i} \int_{\Gamma-u} D(s+w) e^{ms^2+Ms}ds + \sum^{\star}_{\rho} \exp\left(m(\rho-w)^2+M(\rho-w)\right),
\end{align}
where the $\star$ indicates that exactly those zeroes of the Beurling zeta function are taken into account (and then according to multiplicity) which lie to the right of the new contour $\Gamma-u$, more precisely, for which $\rho-w$ is to the right of $\Gamma-u$. Recall that the singularities of $D(s+w)$ are exactly at translates $\rho-w$ of zeroes $\rho$ of $\zeta$ with residues according to multiplicity; and that by construction all translated $\zeta$-zeroes $\rho-w$ avoid points $s-u=\sigma-u+it$ of the curve $\Gamma-u$ --that is, all vertically translated zeroes $\rho-iv$ avoid the points $s=\sigma+it$ of the curve $\Gamma$--by at least $d:=d(t):=d(b,\theta,n,v;t)$ given in \eqref{ddist-corr}. The analogous statement holds for $\overline{w}$ and $U(\overline{w})$, too.

Here the integral can be estimated by Lemma \ref{l:zzpongamma-c} taking into account $\theta<b<u$, $a:=\frac{b+\theta}{2}$ and the construction of $\Gamma_b^\A$ as follows.
\begin{align}\label{Newcontourint}
\bigg| \frac{1}{2\pi i} & \int_{\Gamma-u}  D(s+w) e^{ms^2+Ms}ds \bigg|
\\ & \le \frac{1}{2\pi} \int_{\Gamma-u} \frac{A_1}{(b-\theta)^{3}} \left(\log (|t|+v+5) + \log \frac1{b-\theta}\right)^2 \exp\left(m((u-a)^2-t^2) +M(b-u)\right) |ds| \notag
\\ & \le \frac{A_2}{(b-\theta)^5} e^{m(u-a)^2+M(b-u)} \int_{\Gamma-u} \log^2 (|t|+v+5) \exp(-mt^2) |ds|. \notag
\end{align}

By construction, the broken line $\Gamma$ consists of horizontal line segments $H_k$ of length $\le \frac12(b-\theta)$ at height $t_k$, and vertical segments the horizontal projection of which covers the imaginary axis exactly (apart from endpoints). Therefore,
$$
\int_{\Gamma-u} \log^2 (|t|+v+5) \exp(-mt^2) |ds| \le 2 \int_{0}^\infty \log^2 (t+v+5) e^{-mt^2} dt + (b-\theta) \sum_{k=1}^{\infty} \log^2 (t_k+v+5) e^{-mt_k^2}.
$$
Using the standard Vinogradov notation $\ll$ for explicit numerical constants only, for the integral here we easily see $\int_{0}^\infty = \int_{0}^{v+5}  +\int_{v+5}^\infty \le \log^2(2v+10)\int_0^\infty e^{-t^2}dt + \int_5^{\infty} \log^2(2t) e^{-t^2} dt \ll \log^2(v+5)$. Recalling that by construction $t_1\ge 4$ and $t_k\ge t_{k-1}+1$, ($k\ge 2$), we get a similar estimate for the sum. Therefore,
$$
\int_{\Gamma-u} \log^2 (|t|+v+5) \exp(-mt^2) |ds| \ll \log^2(v+5).
$$
Collecting the above estimates and putting $w=\rho_0, \overline{\rho_0}$ we are led to
\begin{equation}\label{Stranslatedintegral}
\bigg| \frac{1}{2\pi i} \int_{\Gamma-u}  \left\{D(s+\rho_0)+D(s+\overline{\rho_0})\right\} e^{ms^2+Ms}ds \bigg| \le \frac{A_3 \log^2(\gamma_0+5)}{(b-\theta)^5} e^{m(\beta_0-a)^2+M(b-\beta_0)}.
\end{equation}

Next we see to the estimations of the various parts of the right hand side sum of \eqref{Uotherway}.

Keeping the notation $a=\frac{b+\theta}{2}$ used in the construction of $\Gamma$ let us write
\begin{align}\label{Zfarrootsum}
Z_1(w)&:=\sum^{\star}_{\rho;~ |\Im \rho-v|\ge 5} \exp\left(m(\rho-w)^2+M(\rho-w)\right)  \notag
\\ \notag & \le \sum_{k=5}^\infty \exp\left(m(1-u)^2-mk^2+M(1-u)\right) \left\{N(a,v-k-1,v-k)+ N(a,v+k,v+k+1)\right\}
\\ \notag & \le e^{m(1-u)^2+M(1-u)} \sum_{k=5}^\infty e^{-k^2m} \frac{1}{a-\theta}\left(A_4+A_5\log(v+k)\right)
\\ & \le \frac{A_6}{b-\theta} \log(v+5) e^{m(1-u)^2+M(1-u)-25m} \le \frac{A_6}{b-\theta} \log(v+5) e^{-8m},
\end{align}
referring to Lemma \ref{l:zerosinrange} in the third line and then calculating similarly as we did above for the sum $\sum_{k=1}^{\infty} \log^2 (t_k+v+5) e^{-mt_k^2}$.

Applying this to $w=\rho_0, \overline{\rho_0}$ and combining with \eqref{Sdef}, \eqref{Uotherway} and \eqref{Stranslatedintegral}, we are led to
\begin{equation}\label{beforeCassels}
|S(\rho_0)|\ge \left|P \right| - \frac{A_7 \log^2(\gamma_0+5)}{(b-\theta)^5} e^{m(\beta_0-a)^2+M(b-\beta_0)},
\end{equation}
where $P:=P(\rho_0)$ is defined as
\begin{align*}
P& :=\sum^{\star}_{\rho;~ |\Im \rho-\gamma_0|< 5} \exp\left(m(\rho-\rho_0)^2+M(\rho-\rho_0)\right) + \sum^{\star}_{\rho;~ |\Im \rho+\gamma_0|< 5} \exp\left(m(\rho-\overline{\rho_0})^2+M(\rho-\overline{\rho_0})\right)
\\ &= \sum^{\star}_{\rho;~ |\Im \rho-\gamma_0|< 5} \exp\left(m(\rho-\rho_0)^2+M(\rho-\rho_0)\right) + \exp\left(m(\overline{\rho}-\overline{\rho_0})^2+M(\overline{\rho}-\overline{\rho_0})\right).
\end{align*}
To reach a concrete control over the arising error terms we now choose $b:=\frac{\beta_0+\theta}{2}$ (and $a=\frac{\beta_0+3\theta}{4}$ accordingly), and calculate $e^{m(\beta_0-a)^2+M(b-\beta_0)}=e^{m\frac{9}{16}(\beta_0-\theta)^2-8m(\beta_0-\theta)}<e^{-7m(\beta_0-\theta)}$. Therefore, \eqref{beforeCassels} and a little calculus yields
\begin{equation}\label{closetoCassels}
|S(\rho_0)|\ge \left|P \right| - e^{-3m(\beta_0-\theta)},\quad \textrm{if} \quad m\ge \max\left( \frac{5\log\frac{1}{\beta_0-\theta}}{\beta_0-\theta}, \frac{\log A_7}{\beta_0-\theta}, \frac{\log\log(\gamma_0+5)}{\beta_0-\theta} \right),
\end{equation}
say.

Now, $P$ can be written as a sum of pure powers (i.e. without coefficients), where the general term takes either the form
$$
\exp\left(m(\rho-\rho_0)^2+M(\rho-\rho_0)\right)=\exp\left(m\left((\rho-\rho_0)^2+16(\rho-\rho_0)\right)\right)=e^{m\lambda(\rho)},
$$
with $\lambda(\rho):=(\rho-\rho_0)^2+16(\rho-\rho_0)$, or exactly its conjugate
$$
\exp\left(m(\overline{\rho}-\overline{\rho_0})^2+M(\overline{\rho}-\overline{\rho_0})\right)=\exp\left(m\left((\overline{\rho}- \overline{\rho_0})^2+16(\overline{\rho}-\overline{\rho_0})\right)\right)=e^{m\overline{\lambda(\rho)}}.
$$
In the sum there are at most $2N(a,\gamma_0-5,\gamma_0+5)$ terms. Therefore, the number of terms is estimated for $\gamma_0\le 10$ by $2N(a,15) \le \frac{A_8}{\beta_0-\theta}$ according to Lemma \ref{l:Littlewood}, or for $\gamma_0>10$ by $\frac{A_8}{\beta_0-\theta}\log(\gamma_0+5)$ with reference to \eqref{zerosbetween} in Lemma \ref{l:zerosinrange}.

Out of the conjugate pairs of terms of $P$ there are at least one pair--hence at least two terms--which must be exactly 1. Therefore, Lemma \ref{l:Cassels} gives that in any interval of the form $J:=J(H):=[H,\frac{A_8\log(\gamma_0+5)}{\beta_0-\theta} H]$ there exists some $m$ for which $|P|\ge 2$. Taking into account \eqref{S-upperesti} and \eqref{closetoCassels} we therefore obtain
\begin{equation}\label{Salmostready}
\frac{4K |\rho_0|}{\pi} \left(1+\frac{4}{\sqrt{m}\gamma_0} \right)+4e^{-2m} \ge |S| \ge 2-e^{-3m(\beta_0-\theta)},
\end{equation}
for the particular value of $m \in J$, assuming that all the conditions appearing above in \eqref{eq:bBI}, \eqref{S13esti} and \eqref{closetoCassels} are met. Multiplying by $\frac{\pi}{4|\rho_0|}$ and writing in $m\ge H$, $\beta_0-\theta\le 1$, we obtain
$$
K \ge \frac{1}{|\rho_0|} \left(\frac{\pi}{2}-\frac{\pi}{\sqrt{H}\gamma_0 |\rho_0|} - \frac{5\pi}{4} e^{-2H(\beta_0-\theta)}\right).
$$
At last we choose $H:=\log Y$, so that $q:=e^{4m}\ge e^{4H} \ge Y$ and $m\le \frac{A_8}{\beta_0-\theta}\log(\gamma_0+5) \log Y$. Note that the condition \eqref{eq:bBI} will be met if $A_{10}\ge 28A_8$, ensuring also $Q=\exp(28 m) \le Y^{\frac{A_{10}}{\beta_0-\theta}\log(\gamma_0+5)}$ and whence the validity of the upper estimation $\sup_{x \in [q,Q]} \frac{|\Delta(x)|}{x^{\beta_0}}\le K:=\sup_{x \in I} \frac{|\Delta(x)|}{x^{\beta_0}}$ according to \eqref{eq:Kdef}. Further,
$$
\frac{\pi}{\sqrt{H}\gamma_0 |\rho_0|} \le \frac{\pi}{\sqrt{H}\gamma^2_0} \le \frac12 \ve \qquad \textrm{if} \quad H\ge \frac{40}{\ve^2 \gamma_0^4},
$$
and
$$
\frac{5\pi}{4} e^{-2H(\beta_0-\theta)} \le \frac12 \ve \qquad \textrm{if} \quad H\ge \frac{\log(8/\ve)}{2(\beta_0-\theta)}.
$$
It follows that $K\ge \frac{\pi/2-\ve}{|\rho_0|}$ whenever these conditions are all met. However, for $H:=\log Y$ the assumptions \eqref{eq:Ylimits} contain both assumptions here, further, they suffice for \eqref{S13esti} and \eqref{closetoCassels} to hold, so that the result is proved.
\end{proof}

\section{General remarks on the sharpness of Theorem \ref{th:Bonezeroosci} and a special sine polynomial}\label{sec:polynomial}

In the following our goal will be to show--by giving appropriate examples of Beurling prime number and integer systems--that Theorem \ref{th:Bonezeroosci-plus} is optimal, i.e, there exist systems which satisfy all assumptions, yet an oscillation of the size $(\pi/2+\ve)x^{\beta_0}/|\rho_0|$ fails. In other words, we seek systems where $|\Delta_{\G}(x)| \le (\pi/2+\ve)x^{\beta_0}/|\rho_0|~(x\ge x_0)$ with a certain $\zeta$-zero $\rho_0$ of $\zeta$.

It is easy to see that such a Beurling zeta function must have $\beta_0=\theta_0$ where
$$
\theta_0:=\max(\theta, \sup\{\Re \rho~:~ \zeta(\rho)=0\}).
$$
Indeed, if otherwise then with any other zero $\rho_1=\beta_1+\gamma_1$ with $\beta_1>\beta_0$ already Theorem \ref{th:Bonezeroosci-plus}, when applied to this new zero, provides essentially larger oscillation (of the order of $x^{\beta_1}$). In particular, it follows that $\theta_0<1$, as it is well-known that the line $\Re s=1$ does not contain a zero of the Beurling zeta function under much weaker hypothesis than Axiom A.

Moreover, the only reasonable choice is $\gamma_0=\min\{\gamma >0~:~\zeta(\beta_0+i\gamma)=0\}$: for picking other zeroes for $\rho_0$ would simply decrease the constant $(\pi/2+\ve)/|\rho_0|$, making our task more difficult, and, in view of Theorem \ref{th:Bonezeroosci-plus}, even impossible.

Given that we are talking about a Beurling zeta function, arising from a number system, which belongs to a real valued $\N(x)$, the Beurling zeta function is also real valued for real variables $\sigma \in \RR$. Hence by the reflection principle together with any zeta-zero $\rho$ also the conjugate zero $\overline{\rho}$ occurs. Now let us take a look at the terms, "caused by a given zeta zero $\rho$", as they occur in the Riemann-von Mangoldt type formula of Proposition \ref{l:vonMangoldt}. They provide
$$
\frac{x^\rho}{\rho}+\frac{x^{\overline{\rho}}}{\overline{\rho}}= 2 x^\beta \frac{\cos(\gamma\log x -\alpha)}{|\rho|}\qquad \left( \alpha:=\arg(\rho)=\arctan(\gamma/\beta) \right),
$$
or, if we have a series $\rho_k$ of known zeroes with $\beta_k=\beta_0(=\theta)$, then
$$
\sum_{k} \frac{x^{\rho_k}}{\rho_k}+\frac{x^{\overline{\rho_k}}}{\overline{\rho_k}}= 2 x^\beta \sum_{k} \frac{\cos(\gamma_k\log x -\alpha_k)}{|\rho_k|}\qquad \left(\alpha_k:=\arg(\rho_k)=\arctan(\gamma_k/\beta)\right).
$$
To handle these terms easier, let us assume that $\gamma_0$ is chosen very large; then $\alpha_k\approx \pi/2$ and $|\rho_k|\approx \gamma_k$, so that the above sum is approximately
$$
-2 x^\beta \sum_{k}  \frac{\sin(\gamma_k\log x)}{\gamma_k}.
$$
In principle, the sum here can contain infinitely many elements as well, but then the delicate issue of convergence arises. In any case, let us see what we may expect from such a sum. The well-known Fourier series \begin{equation}\label{eq:signseries}
\frac{\pi}{2} \sign (y) = \sum_{k=1}^\infty 2\frac{\sin((2k+1)y)}{2k+1}
\end{equation}
suggests that we should strive for getting $\Delta(x) \approx \frac{\pi}{2} x^{\beta_0} \sign(\gamma_0\log x)/\gamma_0$, that is $\psi(x)\approx x+\frac{\pi}{2} x^{\beta_0} \sign(\gamma_0\log x)/\gamma_0$.

There is only one obstacle here, but a serious one. As said, the two conjugate terms belonging to $\rho$ and its conjugate together rise to a size of $2x^{\beta_0}/|\rho_0|$ time to time. Therefore, to uniformly push down the oscillation, caused by them to only $(\pi/2) x^{\beta_0}/|\rho_0|$, we heavily rely on the interference of other terms of similar size. That happens in the slowly and non-uniformly convergent series of $(\pi/2)\sign (y)$, but to construct a $\Delta(x)$ of that same terms would require a sequence of $\zeta$-zeroes at $\rho_k=\beta_0+(2k+1)\gamma_0$, i.e. a constant times $T$ zeroes on the $\Re s=\theta_0=\beta_0$ line. However, that is impossible for $\beta_0 > \theta + \frac{5}{6}(1-\theta)$, as was recently demonstrated--upon the additional assumptions of Conditions B and G--by the new density result in Theorem \ref{th:density}. On such a line, and in general in a rectangular domain $[\alpha,2]\times[-iT,iT]$ with any $\alpha> \theta + \frac{5}{6}(1-\theta)$, only $o(T)$ zeros can occur. Then, similarly to the analysis in \cite{RevAA}, it even follows that we cannot get better uniform bounds for a properly rare sequence of zeros (which at least meet the criteria, posed by the density theorem), then for simply assuming only finitely many zeros. However, from \eqref{eq:signseries} there is not so easy to come to a finite sum of the same low maximum norm (so the same level of interference extinguishing a $(1-\pi/4)$ portion of the magnitude of the first term). This is known as Gibbs phenomenon or overshooting convergence: see, e.g., page 61 of \cite{Zyg}. According to this phenomenon, partial sums $S_n(\frac{\pi}{2} \sign)$ have definitely larger maximum norm than $\frac{\pi}{2} \sign$ itself. More precisely, $\lim_{n\to \infty} \|S_n(\frac{\pi}{2} \sign)\| = \int_0^\pi (\sin x/x) dx \approx 1.8519\ldots > \pi/2=\|\frac{\pi}{2}\sign\|$, although already less than 2, the maximal size of the first summand.

This is a point when construction of a finite sum $S$ of terms from the series \eqref{eq:signseries}, with about the same low maximum norm $\pi/2+\ve$ as the total sum itself, becomes of some challenge. That was first solved in \S 6 of \cite{RevAA} by a probabilistic construction. Later R\'egis de la Bret\`eche and G\'erald Tenenbaum furnished a deterministic construction, too, through so-called "entieres friables", see in particular the explanation following Th\'eor\`eme 2.2 of \cite{BT}. Both the original probabilistic construction and the later arithmetical construction (so-called P-summability) applies to a wide class of Fourier series, borrowing some importance to the otherwise seemingly rather special question here, see \cite{Rev-JAT} and \cite{BT-2}.

The existence of such a special sine polynomial will be one starting point for our construction, so that we formulate it here as a lemma.
\begin{lemma}\label{l:finitesum} For any $\ve>0$ there exists a natural number $N$ and a finite sequence $\{n_k\}_{k=1}^N \subset \NN$, such that with $n_0:=0$ the sine polynomial $S(y):=2\sum_{k=0}^N \frac{\sin((2n_k+1)y)}{2n_k+1}$ has maximum norm $\| S\|\le \pi/2+\ve$.
\end{lemma}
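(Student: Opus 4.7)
The plan is to exhibit the desired sparse set of odd frequencies via a probabilistic construction, as originally carried out in \S 6 of \cite{RevAA}. The starting observation is that while the partial sums of the Fourier series \eqref{eq:signseries} overshoot $\pi/2$ (Gibbs phenomenon), the Fej\'er (arithmetic) means $\sigma_{2M+1}$ of the series satisfy $\|\sigma_{2M+1}\|_\infty\le \pi/2$, because the Fej\'er kernel is a nonnegative summability kernel of mass one and the target $\tfrac{\pi}{2}\mathrm{sgn}(\sin y)$ has sup norm exactly $\pi/2$. The price, however, is that $\sigma_{2M+1}$ has \emph{modified} coefficients of the form $(1-\tfrac{2j+1}{2M+2})\cdot\tfrac{2}{2j+1}$ on the $j$-th sine mode, whereas the lemma requires the \emph{untouched} coefficients $\tfrac{2}{2n_k+1}$ on a selected subset of frequencies.

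To reconcile these, I would introduce independent Bernoulli variables $\xi_1,\dots,\xi_M$ with $\mathbb{P}(\xi_j=1)=q_j:=1-\tfrac{2j+1}{2M+2}$, set $\xi_0:=1$, and form
\begin{equation*}
S_\omega(y)\;:=\;\sum_{j=0}^M \xi_j(\omega)\,\frac{2\sin((2j+1)y)}{2j+1}.
\end{equation*}
Then $\mathbb{E}\,S_\omega=\sigma_{2M+1}+O(1/M)$ in sup norm, so that $\|\mathbb{E}\,S_\omega\|_\infty\le \pi/2+o(1)$ as $M\to\infty$. The construction then reduces to showing that with positive probability the random deviation $R_\omega:=S_\omega-\mathbb{E}\,S_\omega$, which is a random trigonometric polynomial of degree $2M+1$ with independent bounded coefficients, satisfies $\|R_\omega\|_\infty\le \varepsilon/2$. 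The uniform bound would be obtained by combining a Salem--Zygmund-type maximal inequality with Bernstein's derivative bound on $R_\omega$, transferring pointwise concentration (from Hoeffding's inequality applied to sums of independent, bounded variables) into a uniform bound via a union bound over a grid of spacing $\asymp 1/M$ on the torus.

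The main obstacle is that a naive single-pass Bernoulli selection yields only $\mathbb{E}\|R_\omega\|_\infty=O(\sqrt{\log M})$, which does \emph{not} tend to zero. The way around this, as developed in \cite{RevAA}, is to carry out the random selection in several cascaded stages: each stage handles the residual error left by the previous one and reduces the sup norm of the residual by a geometric factor, until it falls below $\varepsilon/2$ after finitely many rounds. Alternatively, the deterministic construction of de la Bret\`eche and Tenenbaum \cite{BT} via \emph{entiers friables} yields the same conclusion. In either case, the indices $j$ for which $\xi_j(\omega)=1$ supply the required frequencies $\{2n_k+1\}_{k=0}^N$, with $n_0=0$ forced by $\xi_0\equiv 1$, which completes the construction.
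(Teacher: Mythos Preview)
The paper does not give a proof of this lemma; it states the result and points to \S 6 of \cite{RevAA} for the original probabilistic construction and to \cite{BT} for the later deterministic one via friable integers. Your sketch is therefore already more detailed than the paper's own treatment and invokes exactly the same two references, so in that sense you are aligned with the paper.

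Your setup is correct: the Fej\'er means of $\tfrac{\pi}{2}\operatorname{sgn}$ stay below $\pi/2$ by positivity of the Fej\'er kernel, Bernoulli thinning with the Fej\'er weights $1-(2j+1)/(2M+2)$ has the right expectation, and a single-pass Hoeffding bound together with a union bound over an $O(1/M)$ grid indeed only yields a uniform deviation of order $\sqrt{\log M}$, which does not vanish. The one point that is not self-contained is your ``cascaded stages'' remedy: as written it is not transparent how later stages---still constrained to add terms with \emph{unmodified} coefficients $2/(2n+1)$ on fresh odd frequencies---can be targeted at an arbitrary residual trigonometric polynomial rather than at another Fej\'er-type mean, so this step genuinely leans on \cite{RevAA} rather than being argued here. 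Since the paper itself defers the entire proof to \cite{RevAA} and \cite{BT}, this is not a discrepancy with the paper but the same reliance on the literature.
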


\section{A construction of a Beurling number system with given zeroes of $\zp$}\label{sec:zetawithgivenzeroes}

The above suggests that we will need a system of Beurling primes $\PP$ such that the corresponding Beurling zeta function will have a special configuration of zeroes while satisfy certain analytic and order estimate conditions as well. In this section we will present such a construction in a greater generality with possible further applications in mind.

So we set to the following task. Let $r$ be a real number parameter satisfying $1/2 \le r <1$. Also, let a finite multiset $\SSS$ of would-be zeta-zeroes be given. We assume that each element $\rho=\beta+i\gamma \in \SSS$ satisfies $\Re \rho=\beta \in (r,1)$, always, and is listed according to multiplicity, moreover, $\SSS$ is symmetric with respect to the real axis (so that $\rho$ is listed with the same multiplicity as $\overline{\rho}$, the condition being self-evident for real zeroes with $\gamma=0$). Then, the task is to construct a Beurling system of primes $\PP$ and corresponding set of Beurling generalized integers $\N$ subject to Axiom A, and such that $\zp(s)$ has zeta-zeroes in the halfplane $\Re s>r$--or even in $\Re s>1/2$--exactly as prescribed by $\SSS$. Moreover, we want, roughly speaking, that the system satisfy Axiom A with "the best value" of $\theta$ to be $r$. More precisely, $\zp$ be analytic in $\Re s > 1/2$ except for a simple pole at $s=1$, and possibly another one at $s=r$ if $r$ was exceeding $1/2$; and Axiom A is to be satisfied with $\theta$ as either the given value $r>1/2$ (and then, according to the assumed singularity at $s=r$, with no smaller value than $r$), if $r$ exceeded $1/2$, or with all $1/2+\ve$ with any $\ve>0$, if $r=1/2$.

In the course of our work we will establish a number of other useful properties, too, and at the end of the section we will summarize our findings in Theorem \ref{thm:Beurlingconstruction}. That will be combined with Section \ref{sec:polynomial} in the next section to prove Theorem \ref{th:Binterference}, too.

Our Beurling number system $\N$ will arise as a result of a prime selection procedure, designed to approximate a pre-set distribution function as well as possible. This part is far from trivial, and the method of doing so involves probabilistic considerations. The approach was introduced into the study of Beurling number systems by the breakthrough work of Diamond, Montgomery and Vorhauer \cite{DMV}, and then refined by Zhang \cite{Zhang7} as follows.
\begin{theorem}[Diamond-Montgomery-Vorhauer-Zhang]\label{th:DMVZ} Let $f:(1,\infty)\to \RR_{+}$ be a non-negative, locally integrable function with $\int_1^\infty f(u)du =\infty$, and satisfying the "pointwise Chebyshev bound" $f(x)\ll 1/\log x$. Write $F(x):=\int_1^x f(y)dy$.

Then there exists a set of generalized primes $\PP=\{p_j\}_{j=1}^\infty$ such that for all $x\ge 1$ it holds $|\pi_{\PP}(x)-F(x)|\le \sqrt{x}$, and, moreover, we have for all $x\ge 1$ and all $t\in\RR$ the estimate
\begin{equation}\label{pitestimateDMVZ}
\left| \sum_{p_j\le x} p_j^{-it} - \int_1^x u^{-it} f(u) d u \right| \ll \sqrt{x} +\sqrt{\frac{x\log(|t|+1)}{\log(x+1)}}.
\end{equation}
\end{theorem}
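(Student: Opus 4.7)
I would prove this by a probabilistic construction in the style of Diamond--Montgomery--Vorhauer, refined in the manner of Zhang. The idea is to place Beurling primes at random according to a Poisson point process on $(1,\infty)$ with intensity $f(u)\,du$, and to show that both required estimates hold almost surely; any realization from the resulting probability-one event then furnishes the deterministic set $\PP$.

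Concretely, let $\Pi$ be the Poisson point process on $(1,\infty)$ with mean measure $f(u)\,du$, and let $\PP$ be its set of atoms. Then $\pi_\PP(x) := \Pi((1,x])$ is Poisson with mean $F(x)$, and the centred process $R(x) := \pi_\PP(x) - F(x)$ is a square-integrable martingale with independent increments and variance $F(x) \ll x/\log x$, using the pointwise Chebyshev hypothesis $f(x)\ll 1/\log x$. Applying Kolmogorov's (or Doob's) maximal inequality to dyadic blocks $[2^k,2^{k+1}]$ and summing the failure probabilities via Borel--Cantelli, one obtains $|R(x)| \le \sqrt{x}$ uniformly for $x\ge 1$ with probability one; this is the first assertion.

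For the oscillatory estimate the relevant object is
$$R_t(x) := \sum_{p_j \le x} p_j^{-it} - \int_1^x u^{-it} f(u)\,du = \int_1^x u^{-it}\, dR(u).$$
As a stochastic integral against the mean-zero martingale $R$, the process $R_t(x)$ is itself a complex-valued martingale in $x$ with It\^o-type second moment
$$E|R_t(x)|^2 = \int_1^x |u^{-it}|^2 f(u)\,du = F(x) \ll \frac{x}{\log x}.$$
For each fixed $t$ Doob's inequality then controls $E\sup_{y\le x}|R_t(y)|^2 \ll x/\log x$, and Chebyshev converts this into the corresponding pointwise-in-$t$ tail bound.

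The main obstacle, and where the refinement over the original Diamond--Montgomery--Vorhauer argument lives, is obtaining uniformity in $t\in\RR$. I would handle this by discretization: on $|t|\le T$ fix a grid of $t$-values with spacing $\asymp 1/x$ (producing $\asymp xT$ nodes), together with a dyadic grid in $x$, apply Chebyshev at each node, and combine via a union bound. The resulting logarithmic blow-up of order $\sqrt{\log(xT)}$ multiplied with the variance estimate $x/\log x$ gives exactly the target form $\sqrt{x\log(|t|+1)/\log(x+1)}$. The interpolation between grid $t$-values is supplied by the Lipschitz estimate $|u^{-it} - u^{-it'}| \le |t-t'|\log u \le |t-t'|\log x$ for $u\in[1,x]$, which absorbs the discretization error into the implicit constant. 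A final Borel--Cantelli argument over dyadic $x$ and $T$ upgrades the estimates to an almost-sure uniform statement, and intersecting the resulting event with the good event from the first step yields a single realization of $\PP$ satisfying both required bounds.
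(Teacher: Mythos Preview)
The paper does not prove this theorem; it is quoted as a known result from \cite{DMV} and \cite{Zhang7} and then used as a black box in Section~\ref{sec:zetawithgivenzeroes}. So there is no proof in the paper itself to compare against.

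On the substance of your sketch there is a genuine gap. You control the deviations only through second moments and Chebyshev/Doob $L^2$ inequalities, but these tails are too heavy for the union bounds you need. For the first estimate, Doob's $L^2$ maximal inequality gives $P\bigl(\sup_{y\le x}|R(y)|>\sqrt{x}\bigr)\ll F(x)/x\ll 1/\log x$, and $\sum_k 1/\log(2^k)$ diverges, so Borel--Cantelli over dyadic $x$ does not conclude. For the second, with the target deviation $\lambda\asymp\sqrt{x\log T/\log x}$ Chebyshev gives at each grid node a failure probability $\asymp (x/\log x)/\lambda^2\asymp 1/\log T$, which cannot survive a union bound over your $\asymp xT$ nodes. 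The ``logarithmic blow-up of order $\sqrt{\log(xT)}$'' you invoke is exactly what one gets from \emph{subgaussian/Bernstein} tails, not from Chebyshev. A compensated Poisson integral of a bounded function does satisfy such an exponential inequality, and with that input (plus a maximal version or chaining) the scheme can plausibly be carried through; but this must be stated and used explicitly---second moments alone do not suffice.

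As a side remark, the constructions in \cite{DMV}, \cite{Zhang7} and especially \cite{BrouckeVindas} are not Poisson processes: the primes are placed in a more structured fashion, roughly one per unit increment of $F$, randomly located within its slot. This is why Theorem~\ref{th:BV} can push the first bound all the way down to $|\pi_\PP(x)-F(x)|\le 2$; a genuine Poisson process has fluctuations of order $\sqrt{F(x)}$ and could never achieve $O(1)$.
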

Recently a nice sharpening of the method appeared in \cite{BrouckeVindas}. This latter result--Theorem 1.2 in \cite{BrouckeVindas}--will not be indispensable for us, but we will take some slight advantage of it, too.
\begin{theorem}[Broucke-Vindas]\label{th:BV} Let $F$ be a non-decreasing right-continuous function tending to $\infty$, with $F(1)=0$ and satisfying the "global Chebyshev bound" $F(x)\ll x/\log x$.

Then there exists a set of generalized primes $\PP=\{p_j\}_{j=1}^\infty$ such that for all $x\ge 1$ it holds $|\pi_{\PP}(x)-F(x)|\le 2$, and, moreover, we have for all $x\ge 1$ and all $t\in\RR$ the estimate
\begin{equation}\label{pitestimate}
\left| \sum_{p_j\le x} p_j^{-it} - \int_1^x u^{-it} d F(u) \right| \ll \sqrt{x} +\sqrt{\frac{x\log(|t|+1)}{\log(x+1)}}.
\end{equation}
\end{theorem}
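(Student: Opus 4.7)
\textbf{Proof proposal for Theorem \ref{th:BV}.} The plan is to construct $\PP$ by a hybrid procedure: a deterministic skeleton of prime positions makes the counting bound $|\pi_\PP(x)-F(x)|\le 2$ automatic, while a probabilistic choice of each prime's exact location within a narrow window supplies the Bernstein-type cancellation needed for the twisted sums. This separation is precisely what distinguishes the Broucke--Vindas construction from the earlier Diamond--Montgomery--Vorhauer (and Zhang) scheme, in which the counting error itself had to be absorbed by the probabilistic bound.

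First I would partition $(1,\infty)$ into intervals $I_j:=[a_{j-1},a_j)$ by setting $a_j:=\inf\{x~:~F(x)\ge j\}$, so that $F$ increases by essentially one unit across each $I_j$. Sample $p_j\in I_j$ independently from the normalized measure $dF|_{I_j}/(F(a_j{-})-F(a_{j-1}{-}))$. With exactly one $p_j$ per $I_j$ one has $|\pi_\PP(x)-F(x)|\le 2$ deterministically for every realization. The random quantity to be controlled is
$$
S(x,t):=\sum_{p_j\le x}p_j^{-it} - \int_1^x u^{-it}\,dF(u),
$$
and the construction makes $S(x,t)$ a sum of independent, bounded, mean-zero complex variables (up to an $O(1)$ boundary term from the $I_j$ containing $x$).

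Next I would apply Bernstein's inequality to $S(x,t)$. The Chebyshev hypothesis $F(x)\ll x/\log x$ forces $|I_j|\asymp \log a_j$, so a direct Fourier computation on $I_j$ gives $\operatorname{Var}(p_j^{-it})\ll \min\!\bigl(1,\;t^2(\log a_j)^2/a_j^2\bigr)$. Summing over $a_j\le x$, with the natural split at the threshold where the two bounds balance, one obtains a total variance
$$
V(x,t)\ll x+\frac{x\log(|t|+1)}{\log(x+1)},
$$
which is the square of the claimed bound. Bernstein then delivers $|S(x,t)|\ll \sqrt{V(x,t)\log(1/\eta)}$ with probability at least $1-\eta$, for any prescribed pair $(x,t)$.

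The final step is to promote this pointwise concentration into the uniform, almost-sure statement. I would discretize $(x,t)$ to a countable net — $x$ on a dyadic grid, $t$ on integers up to a polynomial range — apply the Bernstein estimate with $\eta$ chosen so that the probability budget is summable along the net, and invoke Borel--Cantelli to extract a single realization satisfying the bound along the net. Standard Lipschitz interpolation, using $|u^{-it}-u^{-it'}|\le |t-t'|\log u$ and the fact that adding or removing one prime perturbs $S$ by at most $2$, then extends the inequality to arbitrary $(x,t)$. The main obstacle I foresee is calibrating the net density against the Bernstein tail so that the union bound closes uniformly in $t$: the variance inflation for large $|t|$ demands a finer grid in $t$ while simultaneously tightening the probability budget per point, and this is exactly the balance that the deterministic skeleton for the counting part is designed to free up.
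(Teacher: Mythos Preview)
The paper does not prove Theorem~\ref{th:BV}; it is quoted verbatim from Broucke and Vindas \cite{BrouckeVindas} (their Theorem~1.2) and used as a black box in Section~\ref{sec:zetawithgivenzeroes}. There is thus no ``paper's own proof'' to compare your proposal against.

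That said, your sketch does capture the essential architecture of the Broucke--Vindas argument: the deterministic partition into unit-mass intervals of $dF$, one prime per interval guaranteeing $|\pi_\PP-F|\le 2$ for every realization, followed by a concentration inequality on the resulting sum of bounded independent mean-zero terms, and a net-plus-Borel--Cantelli argument to pass to the uniform bound. One point to tighten: your claimed individual variance bound $\operatorname{Var}(p_j^{-it})\ll \min(1,\,t^2(\log a_j)^2/a_j^2)$ rests on the assertion $|I_j|\asymp \log a_j$, but the hypothesis $F(x)\ll x/\log x$ is one-sided and does not by itself control the length of $I_j$ from above. In fact the trivial bound $\operatorname{Var}(p_j^{-it})\le 1$ together with the count $\#\{j:a_j\le x\}\ll x/\log x$ already gives total variance $\ll x/\log x$, and the extra $\log(|t|+1)$ factor in the final estimate arises from the union bound over the $t$-net rather than from a refined per-term variance. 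So your overall strategy is sound, but the variance bookkeeping should be rerouted accordingly.
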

We will exploit the full strength of the above marvelous results. Moreover, our proof of Theorem \ref{thm:Beurlingconstruction} will draw much from the proof of Theorem 3.1 of \cite{BrouckeVindas}.

Denote $B:=\max\{\beta=\Re \rho ~:~ \rho\in\SSS\}$ and $N:=\#\SSS$ (counted according to multiplicity). Take a further constant $M\in \NN$ and consider the function
\begin{equation}\label{eq:f}
f_0(x):=x + \frac1{r}x^r+2M\sqrt{x}-\sum_{\rho \in \SSS} \frac{x^{\rho}}{\rho} \qquad  (x\ge 1),
\end{equation}
where here and everywhere else we will mean summation--or taking product--over $\SSS$ according to multiplicity. Direct differentiation yields
$$
f_0'(x)=1+x^{r-1}+Mx^{-1/2}-\sum_{\rho\in\SSS} x^{\rho-1} \ge 1+Mx^{-1/2}-Nx^{B-1}.
$$
The right hand side will be nonnegative if the constant $M$ has been chosen large enough. A little calculus gives that $M\ge M_0:=2N^{\frac{1}{2(1-B)}}$ suffices. It is clear, too, that $f_0(x)=O(x)$, and $f_0(x) \sim x$ as $x\to \infty$. Therefore, the function
\begin{equation}\label{Fdef}
F(x):=\int_1^x df(y) \quad \textrm{where} \quad f'(y):=\frac{1-1/y}{\log y} f'_0(y), \quad \textrm{i.e.} \quad df(y):=\frac{1-1/y}{\log y} df_0(y)
\end{equation}
is a well-defined, continuously differentiable, nondecreasing function, and it admits a Chebyshev bound $F(x)\ll x/\log x$, too, whence it is subject to all the requirements of Theorem \ref{th:BV} of Broucke and Vindas above. As a result, there exists a prime number system $\PP$ with $|\pi_{\PP}(x)-F(x)|\le 2$ and satisfying \eqref{pitestimate}, too. Referring to Theorem \ref{th:DMVZ} only would give here $|\pi_{\PP}(x)-F(x)|\ll \sqrt{x}$, still well sufficient for us, as will be seen below.

For this prime number system $\PP$ the corresponding $\vartheta$ function is defined as $\vartheta(x):=\int_1^x \log y d\pi_{\PP}(y)$. Let us see that it will satisfy $\vartheta(x)=f_0(x)+O(\log x)$. Indeed,
\begin{align*}
\vartheta(x) &:= \left[\log y \pi_{\PP}(y) \right]_1^x - \int_1^x \frac{\pi_{\PP}(y)}{y}dy =\log x ~\pi_{\PP}(x)- \int_1^x \frac{F(y)}{y}dy
+O(\log x)
\\
&= \log x(F(x)+O(1)) -\left\{\int_1^x \left(\int_1^y \frac{1-1/z}{\log z} df_0(z) \right) \frac{dy}{y}\right\} + O(\log x)
\\ &= F(x)\log x - \left\{\int_1^x \left(\int_z^x \frac{dy}{y}\right) \frac{1-1/z}{\log z}  df_0(z) \right\}+ O(\log x)
\\ &= F(x)\log x - \left\{\int_1^x (\log x- \log z) \frac{1-1/z}{\log z}  df_0(z) \right\}+ O(\log x)
\\ &= F(x)\log x - \left\{\log x F(x) - f_0(x)+\int_1^x  \frac{df_0(z)}{z}\right\}+ O(\log x) =f_0(x) + O(\log x).
\end{align*}

As a direct consequence, for the respective von Mangoldt summatory function $\psi_{\PP}(x)=\sum_{n=1}^{[\log x]} \vartheta(x^{1/n})$ we necessarily have $\psi_{\PP}(x)=f_0(x)+g(x)$ with $g(x)= O(\theta(\sqrt{x}))=O(\sqrt{x})$. Using Theorem \ref{th:DMVZ}, the same argument would furnish the weaker $\vartheta(x)=x+O(\sqrt{x} \log x)$ only--due to the change of $O(1)$ to $O(\sqrt{x})$ in the second line--but afterwards for $\psip(x)$ we are to get the only slightly weaker result $g(x)=O(\sqrt{x}\log x)$. However, for this the dominant error comes from the error of the applied theorem, whence cannot be "tricked out" by a modification of the prescribed distribution for $\vartheta$, i.e., for $\pip$, while using the strong $O(1)$ result of Broucke and Vindas, we can even achieve $g(x)=O(x^\varepsilon)$ for $\psip$. For more about this exploitation of the full strength of Theorem \ref{th:BV} see Remark \ref{rem:trick} below.

\medskip
Next, we are to compute the respective Beurling zeta function $\zeta_{\PP}$ from the Mellin transform of $\psi_{\PP}$. Recall that in our terminology the Mellin transform is defined as $\MM(\phi)(s):=\int_1^\infty x^{-s} d\phi(x)=-\phi(1)-\int_1^\infty \phi(x) dx^{-s} $, and the basic connection between the Beurling zeta function $\zeta_{\PP}$ and the von Mangoldt summatory function $\psi_\PP$ is that $\MM(\psi_{\PP})(s)=-\frac{\zeta'_{\PP}}{\zeta_{\PP}}(s)$.

Denote the power function $x\to x^{z}$ as $p_z$. Its Mellin transform is $\MM(p_z)(s)=\frac{z}{s-z}$; whence using $\MM(\psip)=\MM(p_1)+\frac1{r}\MM(p_r)+2M\MM(p_{1/2})- \srs\frac{1}{\rho} \MM(p_\rho)+\MM(g)$ we are led to
\begin{align}\label{eq:zetaPcomputation}
\frac{\zeta'_{\PP}}{\zeta_{\PP}}(s)&=
-\frac{1}{s-1} -\frac{1}{s-r}-\frac{M}{(s-1/2)} + \srs \frac{1}{s-\rho} - G(s),
\end{align}
where $G(s)$ is the Mellin transform of $g$, and as such, is analytic for $\Re s>1/2$.

Consider the product
\begin{equation}\label{eq:Qdef}
Q(s):=\frac{1}{s-1} \frac{1}{s-r} \frac{1}{(s-1/2)^{M}}\prod_{\rho\in\SSS} \left({s}-{\rho}\right).
\end{equation}
It is clear that $\frac{\zeta'_{\PP}}{\zeta_{\PP}}(s)=\frac{Q'}{Q}(s)-G(s)$. Therefore
\begin{equation}\label{eq:zetafirstproduct}
\zeta_{\PP}(s)= c~ Q(s) H(s) \qquad \textrm{with}\quad  H(s):=e^{-\int_1^s G(z) dz} \quad \textrm{and}\quad c\ne 0 \quad\textrm{a constant}.
\end{equation}
By construction, $H(s)$ is analytic and nonvanishing for $\Re s >1/2$ and so is $\zeta_{\PP}$ except for a simple pole at $s=1$ and possibly another simple pole at $s=r$ (if $r$ exceeded $1/2$), and zeroes $\rho\in\SSS$ with exactly the multiplicity given in $\SSS$. Let us underline that $\zeta_{\PP}$ vanishes nowhere else in the halfplane $\Re s>1/2$ of meromorphic continuation.

\medskip
Next, we are to show that the integer counting function $\N(x)$, generated by our set $\PP$ of primes, will satisfy Axiom A. To succeed, we will need the full strength of \eqref{pitestimateDMVZ} or \eqref{pitestimate}, which we formulate with the use of the function
$$
J(x,t):=\sum_{p_j\le x} p_j^{-it} - \int_1^x y^{-it} d F(y)=\int_1^x y^{-it} d(\pip(y)-F(y)).
$$
With this, the above theorems say that we have
\begin{equation}\label{eq:Jfromthms}
|J(x,t)| \ll \sqrt{x} + \sqrt{x\frac{\log(|t|+1)}{\log(x+1)}}.
\end{equation}
So far the analytic characteristics of $\zp$ were found, but we also need good order estimates. Note that for $\Re s\ge 3/2$ we definitely have $|\zp(s)| \le \zp(3/2)<\infty$, whence the order of magnitude of $\zp$ is under some control. However, to proceed we need a different, more precise control on the size of $\zp$, valid also in the critical strip. Equivalently, we estimate
\begin{align}\label{logzeta}
\log\zp(s)&=\sum_{p\in \PP} \log \left(\frac{1}{1-p^{-s}}\right)=\int_{1}^\infty x^{-s} d\PiP(x) =\MM(\PiP)(s),
\end{align}
where $\PiP(x)$ is the Riemann modified prime counting function--coming to picture in view of the Euler product formula \eqref{Euler}--and having the exact form
$$
\PiP(x):=\sum_{p_j\in\PP;~ p_j^n\le x} \frac{1}{n}=\sum_{n=1}^{[\log x/\log p_1]}\frac{\pip(x^{1/n})}{n}=\int_1^x \frac{d \psip(u)}{\log u}.
$$
For large enough $\Re s$ by absolute convergence we can write
\begin{align}\label{Pdevelopment}
Z(s):=\log \zp(s)&=\int_1^\infty x^{-s} d\left(\sum_{n=1}^\infty \frac1{n} \pip(x^{1/n})\right)=\sum_{n=1}^\infty \frac1{n} \int_1^\infty x^{-s} d\pip(x^{1/n}) \notag
\\  &=\sum_{n=1}^\infty \frac1{n} \int_1^\infty y^{-ns} d\pip(y)= \sum_{n=1}^\infty \frac1{n} \MM(\pip)(ns).
\end{align}
Put
$$
P(s):=\MM(\pip)(s)=\int_1^\infty x^{-s} d\pip(x).
$$
If $\Re s\ge 3/2$, then we have
$$
|P(s)|\le \int_1^\infty x^{-s} d\pip(x) \le \int_1^\infty x^{-\si} d\PiP(x) =\log\zp(\si)\le \zp(\si)-1 =\sum_{g\in \GG;~ g\ne 1} \frac{1}{|g|^{\si}}.
$$
Given that $\Re ns \ge 3/2$ for any $s$ with $\Re s\ge 1/2$ and $n\ge 3$, we therefore can write
$$
\left|\sum_{n=3}^\infty \frac1{n} P(ns) \right| \le \sum_{n=3}^\infty \sum_{g\in \GG;~ g\ne 1} \frac{1}{|g|^{n\si}} =\sum_{g\in \GG;~ g\ne 1}\frac{|g|^{-3\si}}{1-|g|^{-\si}} \le \frac{\zp(3/2)}{1-|p_1|^{-1/2}},
$$
furnishing
\begin{equation}\label{Pexplicit}
Z(s)=P(s)+\frac12 P(2s) + P^*(s)\qquad (\Re s >1) \qquad \textrm{with}\quad |P^*(s)|=O(1)~ (\Re s \ge 1/2).
\end{equation}
Note that here the function $P^*(s)$ extends analytically and boundedly to the closed halfplane $\Re s \ge 1/2$, too, even if $Z(s)$ is analytic--and the formula itself is shown--only for $\Re s>1$.

Next, we are to evaluate $P(s)$ for $\Re s\ge 1$. The main term will be provided by $L(s):=\MM(F)(s)$, with an error
$$
R(s):=\MM(\pip-F)(s)=\int_1^\infty x^{-s} d(\pip(x)-F(x)).
$$
Partial integration gives $R(s)=\left[x^{-\si} J(x,t)\right]_1^\infty +\si \int_1^\infty x^{-\si-1} J(x,t) dx=\si \int_1^\infty x^{-\si-1} J(x,t) dx$, so that \eqref{eq:Jfromthms} furnishes
\begin{equation}\label{eq:Rfirstesti}
|R(s)|\ll \frac{\si}{\si-1/2} + \si \frac{\sqrt{\log(|t|+2)}}{\sqrt{\si-1/2}} \quad (\Re s>1/2),
\end{equation}
showing in particular that $R(s)$ is analytic in the halfplane $\Re s>1/2$.

\begin{lemma}\label{l:powerperlog} Let $z \in \CC$ with $\Re z \le 0$ be arbitrary. Then the following integral formula holds.
\begin{equation}\label{powerperlog}
I(z,s):=\int_1^\infty \frac{x^{z-s}-x^{z-1-s}}{\log x} dx = \log\left( \frac{s-z}{s-z-1}\right)\quad (\Re s >1).
\end{equation}
\end{lemma}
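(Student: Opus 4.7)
The plan is to evaluate $I(z,s)$ by differentiating with respect to $s$ and pinning down the constant of integration from the behaviour at $s\to+\infty$.

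First I would verify that, for $\Re z\le 0$ fixed, $I(z,s)$ is well-defined and holomorphic in $s$ throughout the half-plane $\Re s>1$. Writing $x^{z-s}-x^{z-s-1}=x^{z-s-1}(x-1)$ and using that $(x-1)/\log x$ extends continuously to $[1,\infty)$ with value $1$ at $x=1$, the integrand is bounded near $x=1$. At infinity the integrand is of order $x^{\Re z-\Re s}/\log x$, which is integrable because $\Re s-\Re z>1$. The same majorant, uniform on any substrip $\Re s\ge s_0>1$, also justifies differentiating under the integral sign and shows that $s\mapsto I(z,s)$ is analytic.

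Second, since $\partial_s(x^{z-s}/\log x)=-x^{z-s}$, term-by-term differentiation gives
\[
\frac{\partial I}{\partial s}(z,s)=\int_1^\infty \bigl(x^{z-s-1}-x^{z-s}\bigr)\,dx =\frac{1}{s-z}-\frac{1}{s-z-1},
\]
where the tail evaluations are legitimate thanks to $\Re(s-z-1)\ge \Re s-1>0$. Integration in $s$ then yields
\[
I(z,s)=\log\frac{s-z}{s-z-1}+C(z)
\]
for some constant $C(z)$ independent of $s$.

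Third, to fix $C(z)$ I let $s\to+\infty$ along the real axis. Dominated convergence (using the integrable majorant established in the first step) gives $I(z,s)\to 0$, and simultaneously $\log\tfrac{s-z}{s-z-1}=\log\bigl(1+\tfrac{1}{s-z-1}\bigr)\to 0$; hence $C(z)=0$. By the $s$-analyticity from the first step the identity extends to the full half-plane $\Re s>1$. The only mildly delicate point is the interchange of differentiation and integration, handled by the uniform majorant. As a sanity check, the substitution $x=e^u$ converts $I(z,s)$ into the classical Frullani integral $\int_0^\infty (e^{-au}-e^{-bu})/u\,du=\log(b/a)$ with $a:=s-z-1$ and $b:=s-z$ both of positive real part, recovering the same value in one stroke.
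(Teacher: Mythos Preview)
Your proof is correct and follows essentially the same route as the paper's own argument: establish analyticity of $I(z,s)$ in $\Re s>1$, differentiate under the integral sign with respect to $s$, and fix the constant of integration by letting $\Re s\to\infty$. Your write-up is more detailed (and the Frullani observation is a nice bonus), but the underlying idea is identical.
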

\begin{proof} Obviously, the integral converges absolutely and uniformly in any halfplane $\Re s\ge \si_0$ with $\si_0>1$. Therefore, it gives an analytic function for $\Re s>1$. Also we have $\lim_{\Re s\to \infty} I(z,s)=0=\lim_{\Re s\to \infty} \log\left( \frac{s-1}{s-z-1}\right)$, therefore it suffices to check that the derivatives with respect to $s$ of the two analytic expressions match. Differentiation (executed below the integral sign for $I(z,s)$) and a little calculus afterwards yields the assertion.
\end{proof}

The Mellin transform $L(s):=\MM(F)(s)$ decomposes to similar expressions as in the Lemma. Indeed, using $dF(x)=\frac{1-1/x}{\log x}d f_0(x)=((1-1/x)f_0'(x)/\log x) dx $ we can write
\begin{align*}
L(s)&:=\MM(F)(s):=\int_1^\infty x^{-s} dF(x)= I(0,s)+I(r-1,s)+M I(-1/2,s)-\srs I(\rho-1,s)
\\ & =\log\left( \frac{s}{s-1}\right)+\log\left( \frac{s-r+1}{s-r}\right)+M\log\left( \frac{s+1/2}{s-1/2}\right)-\srs\log\left( \frac{s-\rho+1}{s-\rho}\right).
\end{align*}
Collecting the above yields
\begin{equation}\label{eq:logzetapssummedup}
Z(s)=L(s)+R(s) +\frac12 (L(2s)+R(2s))+P^*(s)=L(s)+\frac12 L(2s) +R^*(s),
\end{equation}
where $R^*(s):=R(s)+\frac12 R(2s)+P^*(s)$ is analytic for $\Re s>1/2$ and satisfies
\begin{equation}\label{eq:Rstar}
|R^*(s)| \ll \frac{\si}{\si-1/2} + \si \frac{\sqrt{\log(|t|+2)}}{\sqrt{\si-1/2}} \quad (\Re s>1/2).
\end{equation}
Exponentiating provides us the product representation
\begin{align}\label{eq:zetasproduct}
\zp(s)= \frac{s}{s-1} &\sqrt{\frac{s}{s-1/2}} \frac{s-r+1}{s-r} \sqrt{\frac{s-r/2+1/2}{s-r/2}}
\left( \frac{s+1/2}{s-1/2}\right)^{M} \left( \frac{s+1/4}{s-1/4}\right)^{M/2} \notag
\\\cdot  & \prod_{\rho\in \SSS} \left(\left(\frac{s-\rho}{s-\rho+1}\right)\sqrt{\frac{s-\rho/2}{s-\rho/2+1/2}} \right) \cdot e^{R^*(s)},
\end{align}
Although formally we have got this formula for large $\Re s$ only, in fact by meromorphic continuation it extends to all $\Re s>1/2$. In that halfplane $\zp(s)$ is seen to have a simple pole at $s=1$, and another simple pole at $s=r$ in case $r>1/2$, but no more singularity. Note in particular that all the root expressions are analytic in $\Re s>1/2$ (as well as the respective logarithms were), because these terms have some singularities only for $\Re s\le 1/2$.

Anyway, it is clear that $\N(x)$--the number of Beurling integers with norm (absolute value) not exceeding $x$ in the $\PP$-generated free semigroup--is an increasing function. Let us use its integral $\N_1(x)=\int_1^x \N(y)dy$, which is easier to handle for the inverse Mellin transform (the Perron integral expression) for that is absolutely and uniformly convergent--in view of $|\zp|\le \zp(3/2)$--for any $d\ge 3/2$, say:
$$
\N_1(x)=\frac{1}{2\pi i} \int_{d-i\infty}^{d+i\infty} \frac{x^{s+1}\zp(s)}{s(s+1)}ds=\frac{1}{2\pi i} \int_{d-i\infty}^{d+i\infty} \frac{x^{s+1}\exp\left(\log\zp(s) \right)}{s(s+1)}ds.
$$
Taking into account nonnegativity of $\N(x)$, we have
$$
\N_1(x-1)-\N_1(x) \le \N(x) \le \N_1(x+1)-\N_1(x)=:\DD(x),
$$
so that we will get through with a good asymptotic evaluation of $\DD(x)$. From the above
\begin{equation}\label{eq:DPerron}
\DD(x)=\frac{1}{2\pi i} \int_{3/2-i\infty}^{3/2+i\infty} \frac{((x+1)^{s+1}-x^{s+1})\zp(s)}{s(s+1)}ds.
\end{equation}
Here we introduce two parameters $1/2<a<T$, $a\approx 1/2$ and $T$ large in terms of $x$, and deform the contour of integration to the line $\Re s=a$. Denoting $\kappa:=\lim_{s\to 1} (s-1)\zp(s)$ and in case $r>1/2$ also $\lambda:=\lim_{s\to r} (s-r)\zp(s)$, we get by the Residuum Theorem
$$
\DD(x)=\frac{1}{2\pi i} \int_{(a)} \frac{((x+1)^{s+1}-x^{s+1})\zp(s)}{s(s+1)}ds + \frac{\kappa}{2}(2x+1)+\frac{\lambda}{r(r+1)}\left((x+1)^{r+1}-x^{r+1}\right),
$$
provided that we had $r>1/2$ and $a$ was chosen to satisfy $1/2<r<a$, while the last term is simply missing if $r=1/2$. So let us agree that the value of $\lambda$ is as defined above if $r>1/2$, and is $0$ for $r=1/2$; also, we will assume $1/2<a$ even if $r=1/2$, so that the above formula remains valid even for $r=1/2$.

We cut up the vertical line for the integral to the two parts with $|t|\le T$ and $|t|>T$. For the first we estimate $|((x+1)^{s+1}-x^{s+1})|=|(s+1)\int_x^{x+1} y^s dy|\le |s+1| (x+1)^a<|s+1|2\exp(a\log x)$. So taking $a:=1/2+\delta$, where $\de:=\de_x:=\log^{-1/3}x$ we get on this part $|((x+1)^{s+1}-x^{s+1})|\le 2|s+1| \sqrt{x} \exp(\log^{2/3} x)$. Now, let us assume also $x>x_0(r)$ (in case $r>1/2$) to guarantee $\de<\frac12(r-1/2)$ in this case. For $\zp(s)$ we estimate the terms in \eqref{eq:zetasproduct} utilizing that the factors in the first row are maximal either for $t=0$ or for $t\to \infty$ whenever $s=a+it$, and that the terms in $\prod_{\SSS}$ are all below $1$; also, for the last, exponential factor we make use of \eqref{Rstar}. These yield with some constant\footnote{$C$ is considered here and everywhere a generic constant not necessarily the same at each occurrence.} $C$
$$
|\zp(s)|\le \left| \frac{s}{s-1}\right| 3^{2+M+M/2}\de^{-M-2} e^{O\left( \frac{1}{\de} + \frac{\sqrt{\log(|t|+2)}}{\sqrt{\de}}\right)}\le \left| \frac{s}{s-1}\right| e^{C\log^{1/3}x+C\log^{1/6}x\log^{1/2} \max(2,t)}.
$$
It follows that on the line segment $[a-iT,a+iT]$ we have
\begin{align*}
\bigg| \frac{1}{2\pi i} \int_{a-iT}^{a+iT} & \frac{((x+1)^{s+1}-x^{s+1})\zp(s)}{s(s+1)}ds \bigg| \le \int_{a-iT}^{a+iT} \left|\frac{((x+1)^{s+1}-x^{s+1})}{s+1}\right| \left|\frac{\zp(s)}{s}\right|~dt
\\ & \le 2 \sqrt{x} ~ e^{\log^{2/3}x}~ e^{C\log^{1/3}x+C\log^{1/6}x\log^{1/2}T} ~\frac{1}{\pi} \int_0^T \frac{dt}{|(a+it)-1|}
\\ & \ll \sqrt{x} ~ e^{\log^{2/3}x+C \log^{1/3} x + C\log^{1/6}x\log^{1/2}T}~\log T.
\end{align*}
The estimation on the infinite parts of the line $\Re s=a$ is similar, but there we use only $|((x+1)^{s+1}-x^{s+1})|\le 2(x+1)^{a+1} \le 3x^{a+1} = 3 x^{3/2} e^{\log^{2/3}x}$, so that here we obtain
\begin{align*}
\bigg| \frac{1}{2\pi i} \int_{a+iT}^{a+i\infty} & \frac{((x+1)^{s+1}-x^{s+1})\zp(s)}{s(s+1)}ds \bigg|
\le \int_{a+iT}^{a+i\infty} x^{3/2} e^{\log^{2/3}x} \left|\frac{\zp(s)}{s(s+1)}\right|dt
\\ & \le x^{3/2} e^{\log^{2/3}x} \int_T^\infty \frac{e^{C\log^{1/3} x+ C\log^{1/6}x\log^{1/2} t} dt}{|(a+it)^2-1|}
\\& \le x^{3/2} e^{\log^{2/3}x} \int_T^\infty \frac{e^{(C+C^2)\log^{1/3}x+\frac14 \log t}}{t^2}dt~\le ~\frac{x^{3/2}}{T^{3/4}}\cdot e^{C'\log^{2/3}x}.
\end{align*}
Taking $T=x^2$, say, we easily obtain from the estimates for the two parts that in fact
$$
\bigg| \frac{1}{2\pi i} \int_{(a)} \frac{((x+1)^{s+1}-x^{s+1})\zp(s)}{s(s+1)}ds \bigg| \le \sqrt{x} \exp(O(\log^{2/3} x)),
$$
actually with any implied $O$-constant which exceeds the constant in \eqref{eq:Rfirstesti} by more than 1. Winding up, we are led to
$$
\DD(x)=\kappa x + \frac{\lambda }{r}x^r +O\left(\sqrt{x} \exp(O(\log^{2/3} x)) \right)
$$
and as $\DD(x-1)\le \N(x) \le \DD(x)$, we even get
$$
\N(x)=\kappa x + \frac{\lambda }{r}x^r +O\left(\sqrt{x} \exp(O(\log^{2/3} x)) \right).
$$
Note that by definition here $\lambda=0$ if $r=1/2$, but for any other value $r>1/2$ we have $\lambda \ne 0$, because we have seen that $\zp(s)$ vanishes only at the points $\rho \in \SSS$, all having $\beta=\Re \rho >r$.

So it follows that in case $r$ exceeded $1/2$ we have Axiom A with $\theta=r$, but with no smaller value, and similarly if $r=1/2$ then we have Axiom A with all value $1/2+\ve$. Unfortunately, we do not obtain from this argument--whose natural limit is at the order of $\sqrt{x}$, in view of the applied random prime selection algorithm theorems--whether the arising $\N(x)$ can satisfy Axiom A with a possibly smaller value of $\theta$, or at least with $\theta=1/2$ itself.

\begin{theorem}\label{thm:Beurlingconstruction} Let $1/2\le r<1$ be a parameter and $\SSS$ a finite, symmetric (w.r.t. the real axis) multiset of elements $\rho=\beta+i\gamma$, all satisfying $r<\beta=\Re \rho <1$.

If $r>1/2$, then there exists a sequence $\PP$ of Beurling primes such that the corresponding integer counting function satisfies Axiom A with $\theta=r$, while Axiom A is satisfied with no smaller value in place of $\theta=r$.

Further, if $r=1/2$, then there exists a sequence $\PP$ of Beurling primes such that the corresponding integer counting function satisfies Axiom A with $\theta=1/2+\ve$, for any $\ve>0$.

Moreover, the prime number formula $\psip(x)\sim x$ holds with an error term $\Dp(x):=\psip(x)-x$ satisfying $\Dp(x)=\sum_{\rho \in \SSS} - \frac{x^\rho}{\rho}+O(\sqrt{x})$.

Furthermore, the Beurling zeta function is analytic in the halfplane $\Re s>1/2$, except for a simple pole at $s=1$ with residuum $\kappa$, and in case $r>1/2$ with another simple pole at $s=r$, and in this halfplane it vanishes precisely at the points of $\SSS$, with the multiplicity prescribed in the multiset.
\end{theorem}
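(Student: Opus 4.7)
The plan is to collect the material already developed in Section \ref{sec:zetawithgivenzeroes} into a verification of each of the four assertions in turn. Start from the data $r$ and $\SSS$, pick $M\ge M_0=2N^{1/(2(1-B))}$ so that the $f_0$ in \eqref{eq:f} is nondecreasing, and define $F$ via \eqref{Fdef}. Since $f_0(x)\sim x$ and hence $F(x)\sim x/\log x$, the function $F$ is nondecreasing, right-continuous, continuously differentiable, and satisfies the global Chebyshev bound required by Theorem \ref{th:BV} of Broucke and Vindas. Applying that theorem produces the desired prime system $\PP$ together with the $O(1)$-approximation $|\pip(x)-F(x)|\le 2$ and the sharpened error estimate \eqref{pitestimate}, which in turn controls $J(x,t)$ and thus $R(s)$ as in \eqref{eq:Rfirstesti}.

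For the prime number formula claim, I would follow exactly the chain already sketched: partial integration of $|\pip-F|=O(1)$ gives $\vartheta(x)=f_0(x)+O(\log x)$, then the standard expansion $\psip(x)=\sum_{n\le \log x/\log p_1}\vartheta(x^{1/n})$ combined with $\vartheta(\sqrt{x})=O(\sqrt{x})$ upgrades this to $\psip(x)=f_0(x)+O(\sqrt{x})$, and subtracting $x$ yields $\Dp(x)=-\sum_{\rho\in\SSS}x^\rho/\rho+O(\sqrt{x})$ (with the lower-order terms $\frac{1}{r}x^r$ and $2M\sqrt{x}$ absorbed, the former being necessary when $r>1/2$ so that a genuine pole at $s=r$ is produced). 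For the analytic profile of $\zp$, I would match logarithmic derivatives: \eqref{eq:zetaPcomputation} shows that $-\zp'/\zp = Q'/Q - G$ with $Q$ the explicit rational function in \eqref{eq:Qdef} and $G$ the Mellin transform of the analytic remainder $g$, whence the integrated relation \eqref{eq:zetafirstproduct} exhibits $\zp$ as $cQ(s)H(s)$ with $H$ analytic and nonvanishing in $\Re s>1/2$. Reading off singularities and zeros from this product gives simple poles at $s=1$ and (if $r>1/2$) at $s=r$, and zeros precisely at the points of $\SSS$ with the prescribed multiplicities, and nowhere else in the halfplane of continuation. The sharper Euler-product refinement then yields the factorization \eqref{eq:zetasproduct} together with the explicit size bound \eqref{eq:Rstar}, which is the essential analytic input for the last step.

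To finish with Axiom A, I would apply Perron's formula to the absolutely convergent integral \eqref{eq:DPerron} for $\DD(x):=\N_1(x+1)-\N_1(x)$, shift the line of integration from $\Re s=3/2$ down to $\Re s=1/2+\delta_x$ with $\delta_x:=\log^{-1/3}x$, and collect the residues at $s=1$ (contributing $\kappa(2x+1)/2$) and, when $r>1/2$, at $s=r$ (contributing $\lambda((x+1)^{r+1}-x^{r+1})/(r(r+1))$). The shifted integral is estimated by splitting at height $T=x^2$ and combining \eqref{eq:zetasproduct} with \eqref{eq:Rstar}, producing $\sqrt{x}\exp(O(\log^{2/3}x))$; then the sandwich $\DD(x-1)\le \N(x)\le\DD(x)$ yields $\N(x)=\kappa x+\frac{\lambda}{r}x^r+O(\sqrt{x}\exp(O(\log^{2/3}x)))$. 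This directly gives Axiom A with $\theta=r$ when $r>1/2$ and with $\theta=1/2+\ve$ for every $\ve>0$ when $r=1/2$; sharpness in the case $r>1/2$ is automatic because the construction forces $\lambda=\lim_{s\to r}(s-r)\zp(s)\ne 0$ (as $\zp$ has no zero at $s=r$), so the term $\frac{\lambda}{r}x^r$ is genuine and prevents any smaller value of $\theta$. The main obstacle of the whole argument is getting the contour shift through the critical strip: this needs the $t$-aspect bound \eqref{eq:Rstar}, which itself depends on exploiting the $\sqrt{x\log(|t|+1)/\log(x+1)}$ gain in Theorem \ref{th:BV} rather than a crude $O(\sqrt{x})$, and on taming the root and exponential factors in \eqref{eq:zetasproduct} near the boundary $\Re s=1/2$.
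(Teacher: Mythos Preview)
Your proposal is correct and follows the paper's own argument --- the content of Section \ref{sec:zetawithgivenzeroes} --- essentially step by step, down to the specific choices $\delta_x=\log^{-1/3}x$ and $T=x^2$ in the Perron contour shift and the identification of \eqref{eq:Rstar} as the crucial $t$-aspect input. The one place to tighten the wording is your parenthetical about ``absorbing'' $\frac{1}{r}x^r$ into $O(\sqrt{x})$: when $r>1/2$ this term is genuinely larger than $\sqrt{x}$ and cannot be so absorbed, so the error in the $\Dp$ formula should strictly read $O(x^r)$ --- an imprecision that the paper's own theorem statement shares, and which is harmless for the downstream application since $r<\beta_0$.
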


\begin{remark}\label{rem:trick} It should be clear from the construction that the $O(\sqrt{x})$ error term of $\psi_{\PP}$ was brought about only by the formula  $\psi_{\PP}(x)=\sum_{n=1}^{[\log x]} \vartheta(x^{1/n})$, for $\vartheta(x)$ holding a much more precise error estimate of the order of $\log x$. Making preliminary adjustments (subtracting $\sqrt{x}$ from $f_0(x)$ when defining $\vartheta(x)$) this can be pushed down: in fact, to arbitrary given $0<q<1/2$ we can make corresponding adjustments in this formula to ensure $\psi_{\PP}(x)=f_0(x)+O(x^q)$ with an appropriate $f_0(x)$. Therefore, a proper adjustment of the above example works for arbitrary $0<q<1$, providing a zeta function with $(s-1)\zp(s)$ having a maximal halfplane of analyticity $\Re s >\theta_0=q$ and $\zp(s)$ regular and nonzero in $\Re s >q$ except for the given $\rho_k$ and their "retracts" $\rho_k/2$ etc. (due to the appearance of $\vartheta{(\sqrt{x})}$ etc.), plus the simple pole at $s=1$. However, even if we know analyticity, we cannot get good order estimates in the "bad half" $q<\Re s \le 1/2$ of the critical strip, and, correspondingly, we cannot infer a better error estimate for $\N(x)$.

Although this may seem only a deficiency of our methods, in fact it is not. Indeed, take $\SSS:=\emptyset$. Then the above would mean $\Dp(x)=O(x^q)$. However, according to a result of Neamah and Hilberdink \cite{H-20} about so-called $(\alpha,\beta,\gamma)$ systems of arithmetical semigroups, among the three constants--representing the "best exponent" in the error term for the PNT, the best $\theta$ in Axiom A for $\N(x)$, and the best exponent for the asymptotic error in the formula for the sum of the M\"obius function--the two largest have to be equal and exceed $1/2$. Therefore, if PNT holds with a good error bound, i.e., with an error exponent $q<1/2$, then necessarily the "M\"obius exponent" and "the best $\theta$ in Axiom $A$" are both at least $1/2$ (and match). This shows that we cannot expect, in general, to arrive at Axiom A with better exponent than $1/2$, at least not when we can prescribe good error bounds for the PNT.
\end{remark}

\section{Proof of Theorem \ref{th:Binterference}}\label{sec:proofinterference}

Here we will combine the above considerations--the sine polynomial $S$ presented in Lemma \ref{l:finitesum} and the Beurling number system constructed in Theorem \ref{thm:Beurlingconstruction}--to prove Theorem \ref{th:Binterference}.

Let $\ve>0$ be given and take $S$ as in Lemma \ref{l:finitesum}. Let a parameter $v>0$ be chosen large enough with respect to conditions soon to follow. Take, further, $\rho_k:=\beta_0+i(2n_k+1)v$ for all $k=0,1,\ldots,N$, with the sequence $(n_k)$ coming from the terms of $S$; in particular, let $\rho_0=\beta_0+iv$, so that finally we will set $\gamma_0=v$. Similarly, we will write $\gamma_k:=(2n_k+1)v$. Let us write $\alpha_k:=\arctan(\gamma_k/\beta_0)=\arctan((2n_k+1)v/\beta_0)~(k=0,1,\ldots,N)$. With a slight abuse of notation, we take $\SSS:=\cup _{k=0}^N \{\rho_k, \overline{\rho_k}\}$ (so that here $\#\SSS=2N+2$, not $N$).

An application of Theorem \ref{thm:Beurlingconstruction} provides a Beurling system of primes $\PP$ and integers $\N$ such that Axiom A holds with $\theta=r$ (and with no smaller value, if $r$ was $>1/2$), and such that
$$
\Dp(x)=-\sum_{\rho \in \SSS} \frac{x^\rho}{\rho}+O(\sqrt{x})=-2 x^{\beta_0} \sum_{k=0}^N \frac{\cos(\gamma_k\log x -\alpha_k)}{|\rho_k|}+O(\sqrt{x}).
$$
Let us compute $\Dp(x)|\rho_0|x^{-\beta_0}$. Discarding a negligible $O(x^{1/2-\beta_0})$ term arising from the $O(\sqrt{x})$ above, and writing in $y:=\log x$, this is approximately
$$
T(y):=-2 \sum_{k=0}^N \frac{\cos(\gamma_k y -\alpha_k)}{|\rho_k/\rho_0|} \qquad (y:=\log x).
$$
First, we replace the $\cos$ terms by $\sin(2(n_k+1)vy)$ here, using that $|\alpha_k-\pi/2|=\arctan\left(\frac{\beta_0}{(2n_k+1)v}\right)\le 1/v$ in view of $0<\beta_0<1$. Thus the total error from this approximation can be estimated as
$$
2\sum_{k=0}^N \left| \frac{\cos\left((2n_k+1)v y -\alpha_k \right) }{|\rho_k|/|\rho_0|} - \frac{\sin((2n_k+1)v y)}{|\rho_k|/|\rho_0|} \right| \le \frac{2N+2}{v}.
$$
Second, we modify the denominators from $|\rho_k|/|\rho_0|$ to $(2n_k+1)$ which again results in an error not exceeding
\begin{align*}
\bigg|\sin((2n_k+1)v y) & \left(\frac{|\rho_0|}{|\rho_k|} - \frac{1}{2n_k+1}\right)\bigg| \le \frac{(2n_k+1)|\rho_0|-|\rho|_k}{|\rho_k|(2n_k+1)}
\\&= \frac{(2n_k+1)^2|\rho_0|^2-|\rho|_k^2}{|\rho_k|(2n_k+1)((2n_k+1)|\rho_0|+|\rho_k|)}
< \frac{(2n_k+1)^2\beta_0^2}{(2n_k+1)^2|\rho_0| |\rho_k|} < \frac{1}{v}.
\end{align*}
Summing up, we have
$$
\left| T(y) -S(vy)\right| = \left| T(y)  - 2 \sum_{k=0}^N \frac{\sin((2n_k+1)vy)}{(2n_k+1)}\right| \le \frac{(4N+4)}{v } \le \ve,
$$
provided we choose $v>(4N+4)/\ve$. It follows that for large enough $v$ we have
$$
\left|\Dp(x) \frac{|\rho_0|}{x^{\beta_0}} - S(v\log x) \right| \le \ve + O(x^{1/2-\beta_0}) \le 2\ve,
$$
if $x>x_0(\ve)$. Thus we are led to
$$
\left|\Dp(x)\right| \le\left(\|S\|_\infty +2 \ve\right) \frac{x^{\beta_0}}{|\rho_0|} \le\left(\pi/2+3 \ve\right) \frac{x^{\beta_0}}{|\rho_0|},
$$
whence the theorem.

\section{Concluding remarks and preview of further work}\label{sec:conclusion}

Denote by $\eta(t):(0,\infty)\to (0,1/2)$ a nonincreasing function
and consider the domain
\begin{equation}\label{eq:etazerofree}
\DD(\eta):=\{ s=\sigma+it \in\CC~:~ \sigma>1-\eta(t),~ t>0\}.
\end{equation}
Following Ingham \cite{Ingham} and Pintz \cite{Pintz1,Pintz2}
we will then use the derived function\footnote{Note that $\omega_\eta$ can be expressed via the Legendre transform of the function $\eta(e^v)$, see \cite{Rev-D}.}
\begin{equation}\label{omegadef}
\omega_{\eta}(x):=\inf_{y>1} \left(\eta(y)\log x+\log y\right).
\end{equation}

\begin{theorem}[Pintz]\label{th:domainesti} Assume that there is no
zero of the Riemann $\zeta$ function in $\DD(\eta)$. Then for arbitrary $\ve>0$
we have
$$
\Delta(x)=O(x\exp(-(1-\ve)\omega_\eta(x)).
$$
\end{theorem}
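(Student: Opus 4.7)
My plan is to deduce the theorem from the explicit formula plus a smoothing/unsmoothing argument in the spirit of Ingham and Pintz. The starting point is the (Riemann--)von Mangoldt truncated formula (in the classical case, the analogue of Proposition \ref{l:vonMangoldt})
\[
\Delta(x) \;=\; -\sum_{|\gamma|\le T}\frac{x^{\rho}}{\rho} \;+\; O\!\left(\frac{x\log^{2}(xT)}{T}+\log x\right).
\]
Under the zero-free-region hypothesis every zero $\rho=\beta+i\gamma$ satisfies $\beta\le 1-\eta(|\gamma|)$, so each individual term is majorized pointwise by
\[
\frac{x^{\beta}}{|\rho|}\le \frac{x^{1-\eta(|\gamma|)}}{|\gamma|+1} \;=\; x\exp\!\bigl(-\eta(|\gamma|)\log x-\log(|\gamma|+1)\bigr)\;\le\; x\exp(-\omega_{\eta}(x))
\]
by the very definition of $\omega_{\eta}$ as an infimum. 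Thus on the level of a single zero we already have the target estimate for free; the real difficulty is to sum over all zeros without losing more than a factor $\exp(\varepsilon\omega_{\eta}(x))$.

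To bypass the usual tension between the truncation error (which wants $T$ large) and the size of the zero sum (which grows with $T$), I would smooth: introduce
\[
\Delta_{h}(x):=\frac{1}{h}\int_{0}^{h}\Delta(xe^{u})\,du
\]
for a scale $h>0$ to be chosen. A Perron-type computation yields the absolutely convergent explicit formula $\Delta_{h}(x)=-\sum_{\rho}W_{h}(\rho)\,x^{\rho}/\rho + O(x^{\theta}\log x)$ with weight $W_{h}(\rho)=(e^{h\rho}-1)/(h\rho)$, which decays like $1/(h|\gamma|)$ once $|\gamma|\gg 1/h$. Combining this weight with the pointwise bound above, the zero-counting Lemma \ref{l:Littlewood}, and a dyadic decomposition in $|\gamma|$, the series is controlled by
\[
|\Delta_{h}(x)| \;\ll\; x\exp(-\omega_{\eta}(x))\cdot \sum_{k\ge 0}\frac{2^{k}\log(2^{k})}{1+h\,2^{k}} \;\ll\; x\,\log(1/h)\,\exp(-\omega_{\eta}(x)),
\]
the geometric damping past $k\sim\log(1/h)$ killing the tail. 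To unsmooth, the trivial bound $|\Delta(x)-\Delta_{h}(x)|\ll hx\log x$ (from the Chebyshev bound on $\psi'$) is enough: choosing $h:=\exp(-(1-\varepsilon/2)\omega_{\eta}(x))$ balances the two error sources, and the stray factor $\log(1/h)\le \omega_{\eta}(x)$ is absorbed into the remaining $\varepsilon$-slack, yielding the claimed $O(x\exp(-(1-\varepsilon)\omega_{\eta}(x)))$.

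The main technical obstacle will be the careful bookkeeping of the smoothed zero sum: one must verify that the pointwise bound $x^{\beta}/|\rho|\le x\exp(-\omega_{\eta}(x))$ survives multiplication by $W_{h}(\rho)$ and summation, and that the choice of $h$ correctly locates the dominant dyadic band near the minimizer $y_{0}$ of $\eta(y)\log x+\log y$. The Legendre-type structure of $\omega_{\eta}$ is precisely what makes this localization work: away from $y_{0}$ the dyadic contributions decay geometrically, so only a mild poly-logarithmic loss must be absorbed into $\varepsilon$. A secondary subtlety is handling zeros with very small $|\gamma|$ (where $\log(|\gamma|+1)$ is not useful); these are finite in number and contribute $\ll x^{1-\eta_{0}}$ with $\eta_{0}:=\inf_{y\le 1}\eta(y)$, which is trivially inside the target bound provided $\eta_{0}>0$, which we may arrange by redefining $\eta$ near the real axis without changing the infimum defining $\omega_{\eta}$ for large $x$.
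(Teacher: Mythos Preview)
The paper does not prove Theorem~\ref{th:domainesti}; it is quoted in Section~\ref{sec:conclusion} as a known result of Pintz with references to \cite{Pintz1,Pintz2}, and its Beurling analogue (Theorem~\ref{th:Beurlingdomainesti}) is only announced for the forthcoming work \cite{Rev-Many}. So there is no argument in the present paper to compare your sketch against.

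Independently of that, your sketch has a genuine gap. The displayed dyadic series
\[
\sum_{k\ge 0}\frac{2^{k}\log(2^{k})}{1+h\,2^{k}}
\]
diverges: once $2^{k}>1/h$ the summand is $\asymp (k\log 2)/h$, so the tail is not ``geometrically damped'' but grows without bound in $k$. The source of the problem is that you replaced $x^{\beta}/|\rho|$ by the uniform majorant $x\exp(-\omega_{\eta}(x))$ \emph{before} summing. That bound is the same for every zero, so after multiplying by $|W_{h}(\rho)|\asymp\min\bigl(1,(h|\gamma|)^{-1}\bigr)$ and summing against the zero count $N(t)\asymp t\log t$, each dyadic block at height $\gtrsim 1/h$ still contributes $\asymp (k/h)\cdot x\exp(-\omega_{\eta}(x))$. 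Passing to a squared weight (double smoothing) makes the series converge, but only to a total of order $h^{-1}\log(1/h)\cdot x\exp(-\omega_{\eta}(x))$, which with your choice $h=\exp\bigl(-(1-\varepsilon/2)\omega_{\eta}(x)\bigr)$ is $\gg x$, not $x\exp\bigl(-(1-\varepsilon)\omega_{\eta}(x)\bigr)$.

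This is not a bookkeeping slip but a structural obstruction. Balancing the unsmoothing error $hx$ against a zero sum controlled only through the uniform pointwise bound gives at best the Ingham exponent $\tfrac12\,\omega_{\eta}(x)$, not the Pintz exponent $(1-\varepsilon)\omega_{\eta}(x)$. To reach $1-\varepsilon$ one must retain the full $|\gamma|$-dependence $x^{\beta}/|\rho|\le x\exp\bigl(-(\eta(|\gamma|)\log x+\log|\gamma|)\bigr)$ through the summation and exploit that the function $y\mapsto\eta(y)\log x+\log y$ grows away from its minimizer $y_{0}(x)$, so that blocks far from $y_{0}$ are genuinely suppressed relative to $\exp(-\omega_{\eta}(x))$; equivalently, one shifts the Perron contour to the boundary curve $\sigma=1-\eta(|t|)$ and estimates $\zeta'/\zeta$ there. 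For the actual argument see \cite{Pintz2}; for the Beurling version the present paper refers forward to \cite{Rev-Many}.
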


\begin{theorem}[Pintz]\label{th:domainosci}
Conversely, assuming that there is an infinite sequence of zeroes within
the domain \eqref{eq:etazerofree}, we have for any $\ve >0$
the oscillation $\Delta(x)=\Omega(x\exp(-(1+\ve)\omega_\eta(x))$.
\end{theorem}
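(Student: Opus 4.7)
}
The plan is to invoke Theorem~\ref{th:Bonezeroosci-plus} one zero at a time---to each $\rho=\beta+i\gamma$ drawn from the given infinite sequence in $\DD(\eta)$---and then match the resulting per-zero lower bounds against $\omega_\eta(x)$ by adapting the choice of zero to the scale. The first observation is that for any $\rho\in\DD(\eta)$ the inequality $\beta>1-\eta(\gamma)$ together with $|\rho|\le\sqrt{2}\,\gamma$ (valid for $\gamma\ge 1$) yields the bridge
\[
\frac{x^{\beta}}{|\rho|}\;\ge\;\tfrac12\,x\,\exp\!\bigl(-\eta(\gamma)\log x-\log\gamma\bigr),
\]
so that Theorem~\ref{th:Bonezeroosci-plus}, applied to $\rho$, already produces an $x$ in the interval~\eqref{eq:xinterval} with $|\Delta(x)|\gg x\exp\!\bigl(-\eta(\gamma)\log x-\log\gamma\bigr)$. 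What remains is to bring the exponent $\eta(\gamma)\log x+\log\gamma$ within a factor $1+\ve$ of $\omega_\eta(x)=\inf_{y>1}\bigl(\eta(y)\log x+\log y\bigr)$, which will be arranged by choosing $\rho$ adapted to the scale.

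For a large target $X$, I would pick a near-minimiser $y^{*}=y^{*}(X)$ so that $\eta(y^{*})\log X+\log y^{*}\le(1+\ve/10)\,\omega_\eta(X)$. By hypothesis $\DD(\eta)$ contains infinitely many zeros $\rho_k=\beta_k+i\gamma_k$, and by local finiteness of zeros (Lemma~\ref{l:Littlewood}) one has $\gamma_k\to\infty$; since $y^{*}(X)\to\infty$ monotonically with $X$, one can find an unbounded set of $X$'s for which $y^{*}(X)\in[\gamma_k,(1+\ve/10)\gamma_k]$ for some $k$. For such $X$ I would set $\rho_0:=\rho_k$, whence by monotonicity of $\eta$,
\[
\eta(\gamma_0)\log X+\log\gamma_0\;\le\;\eta(y^{*})\log X+\log\bigl((1+\ve/10)y^{*}\bigr)\;\le\;(1+\ve/5)\omega_\eta(X)+O(1).
\]
Applying Theorem~\ref{th:Bonezeroosci-plus} to $\rho_0$ with parameter $Y:=X$---whose hypotheses~\eqref{eq:Ylimits} are satisfied for $X$ large, since $\gamma_0,|\rho_0|=O(y^{*})$ and $\beta_0-\theta$ is bounded below by $1/2-\theta>0$ (or, more generally, by $1-\eta(\gamma_0)-\theta$)---then produces some $x\in[X,\,X^{C}]$ with $|\Delta(x)|\ge(\pi/2-\ve)\,x^{\beta_0}/|\rho_0|$, where $C=C(\gamma_0,\beta_0)=O(\log\gamma_0)$.

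The main obstacle is that this $x$ may be considerably larger than $X$---possibly as large as $X^{C}$---while the minimiser $y^{**}(x)$ of the infimum defining $\omega_\eta(x)$ drifts upward with $x$, so a $\gamma_0$ tuned to $X$ could cease to be near-optimal for $\omega_\eta(x)$. I would handle this by partitioning the log-range $[\log X,\,C\log X]$ into narrow pieces $[L_i,(1+\ve/20)L_i]$ of relative width $\ve/20$, and rerunning the selection above on each piece with $Y_i:=e^{L_i}$ and a zero $\rho_0^{(i)}$ adapted to $y^{*}(Y_i)$. Here the key structural fact is that $F(u):=\omega_\eta(e^u)$ is concave in $u=\log x$ (being an infimum of linear functions of $u$) and satisfies $F(0)=\inf_{y>1}\log y=0$; consequently $F(au)\le aF(u)$ for every $a\ge 1$, so that $\omega_\eta$ varies by at most a factor $(1+\ve/20)$ on each piece and the $Y_i$-scale lower bound propagates throughout $\bigl[Y_i,\,Y_i^{1+\ve/20}\bigr]$ at the cost of just another $(1+O(\ve))$-factor. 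Letting $X\to\infty$ along the chosen unbounded sequence then produces $x_n\to\infty$ with $|\Delta(x_n)|\gg x_n\exp\!\bigl(-(1+\ve)\omega_\eta(x_n)\bigr)$, the required $\Omega$-bound.
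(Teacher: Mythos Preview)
The paper does not supply a proof of Theorem~\ref{th:domainosci}: it is quoted as Pintz's result for the classical Riemann zeta function, with reference to~\cite{Pintz2}, and the Beurling extension (Theorem~\ref{th:Beurlingdomainosci}) is deferred to the forthcoming paper~\cite{Rev-Many}. So there is no in-paper argument to compare your plan against.

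That said, your plan has a genuine gap at the partition step. When you apply Theorem~\ref{th:Bonezeroosci-plus} with starting point $Y_i$ and a zero $\rho_0^{(i)}$, the output $x$ is only guaranteed to lie in $\bigl[Y_i,\,Y_i^{C_i}\bigr]$ with $C_i=A_{10}\log(\gamma_0^{(i)}+5)/(\beta_0^{(i)}-\theta)^2$; it is \emph{not} forced into your narrow slice $\bigl[Y_i,\,Y_i^{1+\ve/20}\bigr]$. Since $C_i\asymp\log\gamma_0^{(i)}\to\infty$ along your chosen zeros, the produced $x$ can land in a piece $j\gg i$ where $\gamma_0^{(i)}\approx y^*(Y_i)$ is no longer a near-minimiser for $\omega_\eta(x)$. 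Concretely, the overshoot
\[
\eta\bigl(\gamma_0^{(i)}\bigr)\log x+\log\gamma_0^{(i)}-\omega_\eta(x)
\]
can be as large as $(C_i-1)\,\eta(\gamma_0^{(i)})\log Y_i$, which (when $\gamma_0^{(i)}$ is the minimiser for $Y_i$) is comparable to $(C_i-1)\,\omega_\eta(Y_i)$ and therefore swamps the $(1+\ve)$-factor margin you are after. The concavity bound $F(au)\le aF(u)$ you invoke actually points the wrong way here: it \emph{upper}-bounds $\omega_\eta(x)$, whereas you need a \emph{lower} bound on $\omega_\eta(x)$ to absorb the overshoot.

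To make this route work you would need a localisation in Theorem~\ref{th:Bonezeroosci-plus} with exponent $1+o(1)$ rather than $O(\log\gamma_0)$ in~\eqref{eq:xinterval}, which the theorem does not provide. Pintz's original argument in~\cite{Pintz2} circumvents this by not passing through a one-zero oscillation theorem at all.
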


These results, in their original proofs and / or sharpest forms
relied on particular things generally not available for the Beurling
zeta functions. Therefore, it was unclear how much of these relations
can as well be stated for the distribution of Beurling primes?

In our forthcoming work \cite{Rev-Many} we generalize the above results of Pintz to the Beurling case. That is, we prove\footnote{{\bf Note added in proof}: The versions here are extended variants of what is originally stated in the work \cite{Rev-Many}, where Conditions B and G played a role in view of an application of Theorem \ref{th:density} referring to these conditions. The sharpening comes from a very recent result in \cite{Rev-NewDens}, not yet refereed, where
this Carlson-type zero density theorem was extended to all Beurling systems satisfying Axiom A. For details see \cite{Rev-NewDens}.} the following.

\begin{theorem}\label{th:Beurlingdomainesti} Let the arithmetical semigroup $\GG$ satisfy Axiom A, and assume that $\eta(t):(0,\infty)\to (0,1-\theta)$ is a nonincreasing function. If $\DD(\eta)$ is free of zeroes of the Beurling zeta function $\zp$, then for arbitrary $\ve>0$ we have $\Dp(x)=O(x\exp(-(1-\ve)\omega_\eta(x))$.
\end{theorem}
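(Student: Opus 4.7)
The plan is to adapt the classical approach of Pintz to the Beurling setting by combining the Riemann--von Mangoldt formula of Proposition \ref{l:vonMangoldt} with the zero-free region hypothesis and the Carlson-type density theorem (Theorem \ref{th:density}, in its extension to all Axiom A systems announced in \cite{Rev-NewDens}). Write $\omega:=\omega_\eta(x)$. Apply Proposition \ref{l:vonMangoldt} with contour $\Gamma=\Gamma_b^{\{0\}}$ at abscissa $b$ and truncation height $T=t_k$, yielding
\[\Dp(x)=-\!\!\sum_{\rho \in \Z(\Gamma,T)}\frac{x^\rho}{\rho}+O\!\left(\frac{1-\theta}{(b-\theta)^3}\left(A+\kappa+\log\frac{x+T}{b-\theta}\right)^{\!3}\!\left(\frac{x}{T}+x^b\right)\right).\]

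The one-line bound for each term of the zero sum follows directly from the zero-free region hypothesis: for each $\rho=\beta+i\gamma$ with $|\gamma|\le T$ and $\beta>b$ the inequality $\beta\le 1-\eta(|\gamma|)$ gives
\[\frac{|x^\rho|}{|\rho|}\le\frac{x^{1-\eta(|\gamma|)}}{|\gamma|}=x\exp\bigl(-(\eta(|\gamma|)\log x+\log|\gamma|)\bigr)\le xe^{-\omega}\]
by the very definition of $\omega_\eta$ in \eqref{omegadef}. Since summing this crude bound over the $N(b,T)\ll T\log T$ zeros loses too much, partition the zeros dyadically by $|\gamma|\in[2^{k-1},2^k]$ for $1\le k\le\lceil\log_2 T\rceil$ and estimate the number of zeros in each block via the density theorem: $N(b,2^k)\ll (2^k)^{\nu(1-b)+\varepsilon}$ where $\nu:=(6-2\theta)/(1-\theta)$. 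Combining this with $\eta(2^{k-1})\log x\ge\omega-(k-1)\log 2$ gives a block contribution of at most $Cxe^{-\omega}\cdot 2^{k(\nu(1-b)+\varepsilon)}$, and summing the resulting geometric series in $k$ yields a total bound $\ll xe^{-\omega}\cdot T^{\nu(1-b)+\varepsilon}$ for the zero sum. Finally, choose $b$ and $T$ so that this bound and the two error terms $x^b$ and $x/T$ of the explicit formula are each $\ll x\exp(-(1-\varepsilon)\omega)$: when $\omega=o(\log x)$ set $b=1-(1-\varepsilon/2)\omega/\log x$ and $\log T$ slightly above $(1-\varepsilon/2)\omega$; when $\omega$ is comparable to $\log x$ (so $\eta$ is bounded below) take a fixed $b$ just below $1-\lim_{t\to\infty}\eta(t)$, which already forces the zero sum to be empty via the zero-free region.

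The main obstacle is the delicate balancing of these competing constraints: the standard Littlewood-type count of Lemma \ref{l:Littlewood}, with its $T\log T$ growth, is not strong enough, and one genuinely needs the Carlson-type Theorem \ref{th:density}. Its hypothesis $\alpha>(1+\theta)/2$ forces $b$ to lie above this threshold, which is automatic when $\omega=o(\log x)$ and $x$ is large; the complementary regime is handled by the ``zero-sum-is-empty'' argument described above. The polynomial-in-$\log x$ prefactors (from Proposition \ref{l:vonMangoldt} and the broken-line construction of Lemma \ref{l:path-translates}) are absorbed into the slack factor $e^{\varepsilon\omega}$, which is legitimate provided $\omega\gg\log\log x$; in the marginal regime $\omega=O(\log\log x)$ the claimed bound degenerates to the Chebyshev estimate $\Dp(x)=O(x)$ that already follows from Axiom A.
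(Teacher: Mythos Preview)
This theorem is not actually proved in the present paper. It appears only in Section~\ref{sec:conclusion} (``Concluding remarks and preview of further work'') as an announcement of a result to be established in the forthcoming paper \cite{Rev-Many}; the footnote there further explains that dropping Conditions B and G relies on the extended density theorem of \cite{Rev-NewDens}. There is therefore no proof here against which to compare your attempt.

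Your outline is nonetheless the natural adaptation of Pintz's method and matches what the paper's remarks point to: the truncated Riemann--von Mangoldt formula of Proposition~\ref{l:vonMangoldt}, the termwise bound $|x^\rho/\rho|\le xe^{-\omega}$ coming straight from the definition \eqref{omegadef}, and dyadic zero-counting via a Carlson-type density estimate. One step, however, is wrong as written. In the regime $\eta_\infty:=\lim_{t\to\infty}\eta(t)>0$ you claim that taking $b$ ``just below $1-\eta_\infty$'' forces the zero sum to be empty; but the hypothesis only yields $\beta\le 1-\eta(|\gamma|)\le 1-\eta_\infty$, so zeros with real part arbitrarily close to $1-\eta_\infty$ from below---hence above your $b$---are fully consistent with the zero-free region and will still appear in $\Z(\Gamma,T)$. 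This case therefore still requires an honest estimate of the zero sum, not an emptiness argument. (A smaller point: the key inequality $|x^\rho/\rho|\le xe^{-\omega}$ tacitly assumes $|\gamma|>1$, since the infimum in \eqref{omegadef} runs over $y>1$; the boundedly many low-lying zeros need a separate one-line treatment.)
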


\begin{theorem}\label{th:Beurlingdomainosci}
Conversely, if $\eta(t)$ is also convex in logarithmic variables (i.e., $\eta(e^v)$ is convex in $v$), and there are infinitely many zeroes of $\zp$ within the domain \eqref{eq:etazerofree}, then we have for any $\ve >0$
the oscillation $\Dp(x)=\Omega(x\exp(-(1+\ve)\omega_\eta(x))$.
\end{theorem}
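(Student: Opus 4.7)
The strategy will be to apply the single-zero oscillation bound of Theorem \ref{th:Bonezeroosci-plus} to each zero in an infinite subsequence of $\zp$-zeros $\rho_n=\beta_n+i\gamma_n\in\DD(\eta)$ with $\gamma_n\to\infty$. For each such $\rho_n$ the theorem furnishes some $x_n$ (in a prescribed interval) with $|\Dp(x_n)|\geq(\pi/2-\ve_0)x_n^{\beta_n}/|\rho_n|$. Using $1-\beta_n<\eta(\gamma_n)$ (which also gives $\beta_n>\theta$ since $\eta<1-\theta$) together with $\log|\rho_n|=\log\gamma_n+O(1)$, this translates to
$$|\Dp(x_n)|\gg x_n\exp\bigl(-\eta(\gamma_n)\log x_n - \log\gamma_n\bigr),$$
so the whole task is reduced to matching the argument of the exponential against $(1+\ve)\omega_\eta(x_n)$.

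Here the convexity of $\phi(v):=\eta(e^v)$ enters decisively. The quantity $\omega_\eta(x)=\inf_{v>0}(\phi(v)\log x+v)$ is then a genuine Legendre-type transform, and for every fixed $\gamma_n$ there exists, via the appropriate one-sided derivatives of $\phi$, a unique $x_n^{\star}$ --- namely the one satisfying $\log x_n^{\star}=-1/\phi'(\log\gamma_n)$ --- at which $v=\log\gamma_n$ attains the infimum. At this point the identity $\eta(\gamma_n)\log x_n^{\star}+\log\gamma_n=\omega_\eta(x_n^{\star})$ holds \emph{exactly}. Since $\gamma_n\to\infty$ we also have $x_n^{\star}\to\infty$, and we can apply Theorem \ref{th:Bonezeroosci-plus} to $\rho_n$ with starting parameter $Y_n$ chosen so that $x_n^{\star}$ sits near the lower end of the resulting interval $[Y_n,Y_n^{M_n}]$ with $M_n\ll\log(\gamma_n+5)/(\beta_n-\theta)^2$; the hypothesis list \eqref{eq:Ylimits} is met once $\gamma_n$ is sufficiently large, because $\beta_n-\theta>1-\theta-\eta(\gamma_n)$ is bounded below.

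The main obstacle is that the oscillation point $x_n$ delivered by Theorem \ref{th:Bonezeroosci-plus} is not $x_n^{\star}$ itself but an \emph{unknown} point somewhere in $[Y_n,Y_n^{M_n}]$, while the identity $\eta(\gamma_n)\log x+\log\gamma_n=\omega_\eta(x)$ holds only at $x=x_n^{\star}$. Concavity of $\omega_\eta$ in $\log x$ (as an infimum of affine functions of $\log x$) gives $\eta(\gamma_n)\log x+\log\gamma_n\geq\omega_\eta(x)$ throughout, with equality at $x_n^{\star}$ and a gap that, to first order, is quadratic in $\log x-\log x_n^{\star}$ with coefficient governed by the curvature of $\phi$ at $\log\gamma_n$. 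To secure the ratio bound $\eta(\gamma_n)\log x+\log\gamma_n\leq(1+\ve)\omega_\eta(x)$ one has to verify that this gap stays small compared to $\omega_\eta(x)\to\infty$ throughout the oscillation window. The plan is to accomplish this by passing to a subsequence of zeros for which the curvature stays controlled, choosing $Y_n$ sufficiently close to $x_n^{\star}$, and exploiting the monotonicity of the Legendre correspondence to confine $x_n$ to a range where the convex comparison is sharp enough. Combined with the arbitrariness of $\ve>0$, this yields the desired $|\Dp(x)|=\Omega(x\exp(-(1+\ve)\omega_\eta(x)))$.
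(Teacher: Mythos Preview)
First, note that Theorem \ref{th:Beurlingdomainosci} is not proved in this paper: it appears in the concluding section only as an announcement of a result from the forthcoming work \cite{Rev-Many}. So there is no proof here against which to compare your attempt.

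That said, your strategy has the right skeleton but a genuine gap precisely at the point you yourself label ``the main obstacle''. The oscillation window produced by Theorem \ref{th:Bonezeroosci-plus} has logarithmic ratio $M_n=A_{10}\log(\gamma_n+5)/(\beta_n-\theta)^2\asymp\log\gamma_n$, so $\log x_n$ may exceed $\log x_n^{\star}$ by a factor as large as $M_n$. This is far too wide for the tangent-line comparison $\eta(\gamma_n)\log x+\log\gamma_n\le(1+\ve)\omega_\eta(x)$ to persist. Concretely, take the de la Vall\'ee Poussin shape $\eta(y)=c/\log y$ (capped near $y=1$ to stay below $1-\theta$); then $\phi(v)=c/v$ is convex, $\omega_\eta(x)=2\sqrt{c\log x}$, and $\log x_n^{\star}=(\log\gamma_n)^2/c$. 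With $Y_n=x_n^{\star}$ the top of the window has $\log x\asymp(\log\gamma_n)^3$, where
\[
\eta(\gamma_n)\log x+\log\gamma_n\asymp(\log\gamma_n)^2\qquad\text{while}\qquad \omega_\eta(x)\asymp(\log\gamma_n)^{3/2},
\]
so the ratio blows up like $(\log\gamma_n)^{1/2}$; placing $x_n^{\star}$ at the top or the geometric middle of the window gives the same divergence. Your proposed remedies do not touch this: the curvature of $v\mapsto\eta(e^v)$ is fixed by $\eta$ and has nothing to do with which zeros you select, so ``passing to a subsequence of zeros for which the curvature stays controlled'' gains nothing; and monotonicity of the Legendre correspondence cannot confine $x_n$, because Theorem \ref{th:Bonezeroosci-plus} gives no information whatsoever about where in $[Y_n,Y_n^{M_n}]$ the oscillation point falls.

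What is actually needed is an oscillation statement with localization tight enough that the logarithmic width of the window is $o(\omega_\eta)$ rather than a fixed multiple of $\log\gamma_n$. The paper itself alludes to this trade-off (cf.\ the remarks in the introduction citing \cite{PintzBasel}, \cite{PintzNoordwijkerhout}, \cite{Schlage-Puchta}, where the sharp constant $\pi/2$ is sacrificed for better localization), and the Pintz argument behind the classical Theorem \ref{th:domainosci} tunes the weighted-integral/power-sum machinery directly to the Legendre-transform structure rather than invoking a black-box one-zero theorem. Until you either prove such a localized version in the Beurling setting or carry out that tuned argument, the proof does not close.
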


Such type of general results, although known for the classical Riemann case or e.g. for algebraic number fields, but are rare in the Beurling context. An interesting contribution is \cite{Johnston}. The work there can be interpreted as an investigation about the very special zero-free region bounded by
$$
\eta(t):=\begin{cases} 1/2 \qquad&\textrm{if}\quad |t|\le T, \\ 0 \qquad&\textrm{if}\quad |t|> T,
\end{cases}
$$
that is, assuming RH for $|t|\le T$, and nothing for $|t|>T$. Although very special as for $\eta$, it still has importance, because numerical information on the validity of RH over a finite range already yield effective error estimates for PNT. Even if Johnston presents his work for the Riemann zeta only, the proofs are general and can as well be applied for the Beurling case. Similarly, an explicit error estimate of $\Delta(x)$ can always be obtained from knowledge of a de la Vallée Poussin-type zero-free region \eqref{classicalzerofree}, which, in turn, can be obtained by century old methods of de la Vallée Poussin and Landau even in the Beurling case--unlike more sophisticated results for zero-free regions, which do not extend to the Beurling setup. Our point is the precise connection of constants: if \eqref{classicalzerofree} is known with a concrete $c$, then we also want a concrete constant in \eqref{classicalerrorterm}. (E.g. in \cite{MT} the constants obtained for the Riemann case are $1/5.573412$ and $1/\sqrt{6.315}$.) Our goal is to extend such special results to general ones, which tell, in the generality of the Beurling setup and in an essentially optimal way in each direction, how the information on zero-free regions is connected to error bounds for PNT.

\section{Funding acknowledgements}

Supported in part by Hungarian National Foundation for Scientific Research, Grant \# T-72731,
T-049301, K-61908, K-119528 and K-132097 and by the Hungarian-French Scientific and
Technological Governmental Cooperation, Project \# T\'ET-F-10/04 and the Hungarian-German Scientific and Technological Governmental Cooperation, Project \# TEMPUS-DAAD \# 308015.

\end{document}